\documentclass[11pt]{article}

\usepackage[margin=1.2in]{geometry}

\usepackage[T1]{fontenc}
\usepackage[utf8]{inputenc}
\usepackage[USenglish, UKenglish, english]{babel}
\usepackage{amssymb}
\usepackage{amsthm}
\usepackage{amsmath}
\usepackage[mathscr]{euscript}
\usepackage{enumitem}
\usepackage{graphicx}
\usepackage{MnSymbol}
 \let\mathscr\relax
\usepackage[scr]{rsfso}
\usepackage{tikz-cd}
\usepackage{tikz}
\usetikzlibrary{matrix,arrows,decorations.pathmorphing}
\usepackage{hyperref}
\usepackage{marginnote}
\usetikzlibrary{arrows.meta}

\usepackage{cases}


\usepackage{libertine}

\theoremstyle{definition}
\newtheorem{defin}{Definition}[section]
\theoremstyle{definition}

\theoremstyle{plain}
\newtheorem{theo}[defin]{Theorem}
\theoremstyle{plain}
\newtheorem{prop}[defin]{Proposition}
\theoremstyle{plain}
\newtheorem{lem}[defin]{Lemma}
\theoremstyle{plain}
\newtheorem{cor}[defin]{Corollary}
\theoremstyle{definition}
\newtheorem{rmk}[defin]{Remark}
\theoremstyle{definition}

\theoremstyle{definition}

\theoremstyle{plain}

\theoremstyle{definition}

\theoremstyle{definition}

\theoremstyle{definition}

\theoremstyle{plain}

\theoremstyle{definition}

\theoremstyle{definition}
\newtheorem*{defin*}{Definition}
\theoremstyle{definition}
\newtheorem*{ex*}{Example}
\theoremstyle{plain}
\newtheorem*{theo*}{Theorem}
\theoremstyle{plain}
\theoremstyle{plain}
\newtheorem*{conj*}{Conjecture}
\newtheorem*{prop*}{Proposition}
\theoremstyle{plain}
\newtheorem*{lem*}{Lemma}
\theoremstyle{plain}
\newtheorem*{cor*}{Corollary}
\theoremstyle{definition}
\newtheorem*{rmk*}{Remark}
\theoremstyle{definition}
\newtheorem*{exe*}{Exercise}

\theoremstyle{plain}
\newtheorem{theoA}{Theorem}

\theoremstyle{plain}

\theoremstyle{plain}

\theoremstyle{plain}

\theoremstyle{plain}

\numberwithin{equation}{section}

\usepackage{parskip}

\makeatletter
\def\thm@space@setup{%
  \thm@preskip=\parskip \thm@postskip=0pt
}
\makeatother


\setlist[enumerate]{label=(\roman*)}

\def\R{{\mathbf{R}}} 

\def\uch{{\rm Uch}}

\def\bl{{\rm bl}}

\def\irr{{\rm Irr}}

\def\ps{{\rm ps}}

\def\ker{{\rm Ker}}

\def\aut{{\rm Aut}}

\def\cl{{\mathfrak{Cl}}}

\def\GL{{\rm GL}}

\def\n{{\mathbf{N}}} 

\def\c{{\mathbf{C}}} 
 
\def\z{{\mathbf{Z}}} 

\def\O{{\mathbf{O}}} 

\def\E{{\mathcal{E}}} 







\def\C{{\mathcal{C}}} 

\def\Pr{{\mathcal{P}}} 


\def\k{{\mathbf{k}}}

\def\G{{\mathbf{G}}} 

\def\H{{\mathbf{H}}} 

\def\K{{\mathbf{K}}} 

\def\L{{\mathbf{L}}} 

\def\M{{\mathbf{M}}} 

\def\T{{\mathbf{T}}} 

\def\S{{\mathbf{S}}} 

\def\B{{\mathbf{B}}} 

\def\P{{\mathbf{P}}} 


 


\def\CL{{\mathcal{L}}} 




\DeclareMathOperator{\CP}{\mathcal{C}\CPkern \mathcal{P}_{\rm u}}
\newcommand{\CPkern}{%
  \mkern-1.0mu
  \mathchoice{}{}{\mkern0.2mu}{\mkern0.5mu}%
}


\makeatletter
\newcommand{\uset}[3][0ex]{%
  \mathrel{\mathop{#3}\limits_{
    \vbox to#1{\kern-7\ex@
    \hbox{$\scriptstyle#2$}\vss}}}}
\makeatother

\newcommand{\wt}[1]{\widetilde{#1}} 

\newcommand{\wh}[1]{\widehat{#1}}

\newcommand{\ws}[1][1.5]{
  \mathrel{\scalebox{#1}[1]{$\sim$}}
}

\newcommand{\iso}[1]{\ws_{#1}}

\newcommand{\isoc}[1]{\ws_{#1}^c}


\usepackage{sectsty}
\allsectionsfont{\centering}

\usepackage{xparse,etoolbox}

\newcommand{\blocktheorem}[1]{%
  \csletcs{old#1}{#1}
  \csletcs{endold#1}{end#1}
  \RenewDocumentEnvironment{#1}{o}
    {\par\addvspace{1.5ex}
     \noindent\begin{minipage}{\textwidth}
     \IfNoValueTF{##1}
       {\csuse{old#1}}
       {\csuse{old#1}[##1]}}
    {\csuse{endold#1}
     \end{minipage}
     \par\addvspace{1.5ex}}
}

\blocktheorem{theo}
\blocktheorem{conj}
\blocktheorem{para}
\blocktheorem{theoA}
\blocktheorem{conjA}
\blocktheorem{paraA}
\blocktheorem{corA}

\makeatletter
\def\blfootnote{\gdef\@thefnmark{}\@footnotetext}
\makeatother

\title{
{\huge\bf A local-global principle for unipotent characters}\\
\author{\Large Damiano Rossi}
\date{}
\blfootnote{\emph{$2010$ Mathematical Subject Classification:} $20$C$20$, $20$C$33$.
\\
\emph{Key words and phrases:} Dade's Conjecture, Character Triple Conjecture, finite reductive groups, unipotent characters.
\\
This work is partially supported by the EPSRC grant EP/T$004592/1$ and was written during a research visit of the author at the Universit\'a degli Studi di Firenze. The author would like to thank Silvio Dolfi and all the members of the algebra group in the Department of Mathematics for their hospitality and, in particular, Carolina Vallejo for some comments concerning the local-global principle. Moreover, the author would like to thank Lucas Ruhstorfer for some helpful conversation on the paper \cite{Bro-Ruh23}.
}}

\begin{document}

\renewcommand{\thetheoA}{\Alph{theoA}}

\renewcommand{\thecorA}{\Alph{corA}}

\selectlanguage{english}

\maketitle

\begin{abstract}
We obtain an adaptation of Dade's Conjecture and Sp\"ath's Character Triple Conjecture to unipotent characters of simple, simply connected finite reductive groups of type $\bf{A}$, $\bf{B}$ and $\bf{C}$. In particular, this gives a precise formula for counting the number of unipotent characters of each defect $d$ in any Brauer $\ell$-block $B$ in terms of local invariants associated to $e$-local structures. This provides a geometric version of the local-global principle in representation theory of finite groups. A key ingredient in our proof is the construction of certain parametrisations of unipotent generalised Harish-Chandra series that are compatible with isomorphisms of character triples.
\end{abstract}

\tableofcontents

\section{Introduction}

The local-global conjectures are currently some of the most interesting and challenging problems in representation theory of finite groups. Among others, these include the McKay Conjecture \cite{McK72}, the Alperin–-McKay Conjecture \cite{Alp76} and Alperin’s Weight Conjecture \cite{Alp87} all of which can be deduced by a deeper statement known as Dade's Conjecture \cite{Dad92}, \cite{Dad94}, \cite{Dad97}. The latter also implies the celebrated Brauer's Height Zero Conjecture introduced in \cite{Bra55} and whose proof has recently been completed in \cite{MNSFT} and \cite{Ruh22AM} while relying on a combined effort of many other authors.

In this paper, we are particularly interested in \textit{Dade's Conjecture} which, for every prime number $\ell$, suggests a precise formula for counting the number of irreducible characters of a finite group, with a given $\ell$-defect and belonging to a given Brauer $\ell$-block, in terms of the $\ell$-local structure of the group itself. This conjecture has been further extended in \cite{Spa17} where the \textit{Character Triple Conjecture} was formulated by introducing a compatibility with \textit{$N$-block isomorphisms} of character triples, hereinafter denoted by $\iso{N}$, as defined in \cite[Definition 3.6]{Spa17}. This notion plays a fundamental role in many aspects of group representation theory and, as we will see later, gives us a way to control the representation theory of local subgroups. Furthermore, it was exploited to reduce Dade's Conjecture to finite quasi-simple groups as explained in \cite[Theorem 1.3]{Spa17}.

Our aim is to adapt and prove the two conjectures described in the previous paragraph to the case of unipotent characters of finite reductive groups. The approach considered here is inspired by ideas introduced by the author in \cite{Ros-Generalized_HC_theory_for_Dade} and provides further evidence for the conjectures formulated in that paper \cite[Conjecture C and Conjecture D]{Ros-Generalized_HC_theory_for_Dade}. In particular, the $\ell$-local structures considered above are replaced by more suitable $e$-local structures arising from the geometry of the underlying algebraic group that are compatible with the framework of Deligne--Lusztig theory. Therefore, our results also suggest the existence of an $e$-local-global principle for the representation theory of finite reductive groups.

More precisely, let $\G$ be a simple, simply connected group of type $\bf{A}$, $\bf{B}$ or $\bf{C}$ which is defined over an algebraically closed field of positive characteristic $p$ and let $F:\G\to\G$ be a Frobenius endomorphism endowing $\G$, as a variety, with an $\mathbb{F}_q$-structure for some power $q$ of $p$. We denote by $\G^F$ the \textit{finite reductive group} consisting of the $\mathbb{F}_q$-rational points on $\G$. Furthermore, we fix an odd prime $\ell$ different from $p$ and denote by $e$ the multiplicative order of $q$ modulo $\ell$. We let $\CL_e(\G,F)$ denote the set of \textit{$e$-chains} of $(\G,F)$ of the form $\sigma=\{\G=\L_0>\L_1>\dots>\L_n\}$ where each $\L_i$ is an $e$-split Levi subgroup of $(\G,F)$. The final term of the $e$-chain $\sigma$ is denoted by $\L(\sigma)=\L_n$, while $|\sigma|:=n$ is the length of $\sigma$. Observe that the latter induces a partition of the set $\CL_e(\G,F)$ into the sets $\CL_e(\G,F)_\pm$ consisting of those $e$-chains $\sigma$ that satisfy $(-1)^{|\sigma|}=\pm 1$. Furthermore, notice that $\G^F$ acts by conjugation on the set $\CL_e(\G,F)$ and indicate by $\G_\sigma^F$ the stabiliser of the $e$-chain $\sigma$. It follows directly from the definition that this action preserves the length of $e$-chains and, in particular, it restricts to an action of $\G^F$ on the set $\CL_e(\G,F)_{>0}$ of $e$-chains of positive length.

Now, to each non-negative integer $d$ and Brauer $\ell$-block $B$ of the finite group $\G^F$, we associate a set $\CL_{\rm u}^d(B)_\pm$ consisting of quadruples $(\sigma,\M,\mu,\vartheta)$ where $\sigma$ is an $e$-chain belonging to $\CL_e(\G,F)_\pm$, $(\M,\mu)$ is a unipotent $e$-cuspidal pair of $(\L(\sigma),F)$ such that $\M$ does not coincide with $\G$, and $\vartheta$ is an irreducible character of the $e$-chain stabiliser $\G_\sigma^F$ belonging to the character set $\uch^d(B_\sigma,(\M,\mu))$ defined by the choice of $d$, $B$, $\sigma$ and $(\M,\mu)$ as described in Definition \ref{def:e-chains character sets, blocks}. Once again, the group $\G^F$ acts by conjugation on $\CL_{\rm u}^d(B)_\pm$ and we indicate the corresponding set of $\G^F$-orbits by $\CL^d_{\rm u}(B)_\pm/\G^F$. Moreover, for every such orbit $\omega$, we denote by $\omega^\bullet$ the corresponding $\G^F$-orbit of pairs $(\sigma,\vartheta)$ such that $(\sigma,\M,\mu,\vartheta)\in\omega$ for some unipotent $e$-cuspidal pair $(\M,\mu)$.

With the above notation, we are now able to state our first main result. For simplicity, in the next theorem we assume that the prime $\ell$ does not divide the greatest common divisor $(q\pm 1,n+1)$ whenever $(\G,F)$ is of type ${\bf A}_n(\pm q)$ and where ${\bf A}_n(-q)$ denotes ${^2{\bf A}}_n(q)$ as usual. Observe however that this assumption can be removed as explained in Remark \ref{rmk:Condition on prime} (see Theorem \ref{thm:iUnipotent} for the more general statement).

\begin{theoA}
\label{thm:Main iUnipotent}
For every Brauer $\ell$-block $B$ of $\G^F$ and every non-negative integer $d$, there exists an $\aut_\mathbb{F}(\G^F)_B$-equivariant bijection
\[\Lambda:\CL^d_{\rm u}(B)_+/\G^F\to\CL^d_{\rm u}(B)_-/\G^F\]
such that
\[\left(X_{\sigma,\vartheta},\G^F_\sigma,\vartheta\right)\iso{\G^F}\left(X_{\rho,\chi},\G^F_\rho,\chi\right)\]
for every $\omega\in\CL^d_{\rm u}(B)_+/\G^F$, any $(\sigma,\vartheta)\in\omega^\bullet$, any $(\rho,\chi)\in\Lambda(\omega)^\bullet$ and where $X:=\G^F\rtimes \aut_\mathbb{F}(\G^F)$ and $\aut_\mathbb{F}(\G^F)$ is the group of automorphisms described in Section \ref{sec:equivarance and maximal ext}. 
\end{theoA}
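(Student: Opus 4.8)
The plan is to reduce the statement to a purely "block-free'' version phrased in terms of unipotent generalised Harish-Chandra series, and then to feed in the compatible parametrisations of those series announced in the abstract. First I would set up the bookkeeping: for each $e$-chain $\sigma$ the character set $\uch^d(B_\sigma,(\M,\mu))$ lives inside $\irr(\G_\sigma^F)$ and, via Clifford theory over $\L(\sigma)^F \trianglelefteq \G_\sigma^F$ together with $e$-Harish-Chandra theory inside $\L(\sigma)^F$, each such character is controlled by a unipotent $e$-cuspidal pair $(\M,\mu)$ and a character of (a suitable extension of) the relative Weyl group $W_{\L(\sigma)^F}(\M,\mu)$. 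The key structural input is that the relative Weyl groups $W_{\G^F}(\M,\mu)$ are themselves (products of) groups of the form associated to the poset of $e$-split Levi subgroups between $\M$ and $\G$, so that the whole family of sets $\CL_{\rm u}^d(B)_\pm$ reorganises, block by block and $e$-cuspidal pair by $e$-cuspidal pair, into chain complexes for these Weyl-type groups. Concretely, I would fix a $\G^F$-conjugacy class of unipotent $e$-cuspidal pairs $(\M,\mu)$ with $\M \ne \G$, fix the block $B$ it lies in (Bonnafé--Rouquier / Cabanes--Enguehard theory tells us $(\M,\mu)$ determines a block), and show that the $(\M,\mu)$-part of $\CL_{\rm u}^d(B)_+$ and of $\CL_{\rm u}^d(B)_-$ are in canonical bijection with the positive- and negative-length parts of the chain complex $\CL_e$ of $e$-split Levi overgroups of $\M$, decorated by characters of relative Weyl groups of the appropriate defect.

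The second step is to produce the bijection $\Lambda$. Here I would invoke the standard fact (going back to the combinatorics behind Dade's conjecture for solvable-by-... situations, and used in \cite{Ros-Generalized_HC_theory_for_Dade}) that on any simplicial complex with a group action admitting a "cone point'' — which the full chain $\G = \L_0 > \dots$ provides, since every $e$-chain can be extended by $\G$ at the top, or contracted — the alternating sum of orbit counts of chains of positive length, weighted by characters, vanishes; equivalently there is a canonical length-parity-reversing bijection. Since $\M \ne \G$ is imposed, the relevant complex is the one of $e$-split Levi subgroups strictly between $\M$ and $\G$ (or rather its order complex), which is exactly the kind of complex for which such a cancellation/bijection is available. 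Running this bijection compatibly over all $(\M,\mu)$ and all defect values $d$, and checking it respects the block $B$ (immediate, since $B$ is constant on each $(\M,\mu)$-piece) and is $\aut_\mathbb{F}(\G^F)_B$-equivariant (because $\aut_\mathbb{F}(\G^F)$ permutes the $e$-split Levi subgroups and the $e$-cuspidal pairs, and the combinatorial bijection is canonical hence equivariant), assembles the global $\Lambda$.

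The third and genuinely hard step is upgrading this counting bijection to the statement
\[
\left(X_{\sigma,\vartheta},\G^F_\sigma,\vartheta\right)\iso{\G^F}\left(X_{\rho,\chi},\G^F_\rho,\chi\right).
\]
For this I would use the compatible parametrisations of unipotent generalised Harish-Chandra series constructed earlier in the paper: these give, for each $e$-cuspidal pair and each overgroup, bijections between the relevant character sets and character sets of relative Weyl groups that are compatible with $\iso{}$ of character triples in the sense of \cite[Definition 3.6]{Spa17}. The point is that the combinatorial $\Lambda$ was built entirely on the Weyl-group side, where the analogous $\iso{}$-statement is either trivial (the groups on both sides of the Weyl-level bijection are literally the same, up to the cone/contraction) or follows from the well-behaved Clifford theory of the relative Weyl groups; transporting it back through the compatible parametrisations, and using transitivity of $\iso{\G^F}$ together with the fact that $\iso{\G^F}$ is preserved under the passage $\L(\sigma)^F \to \G_\sigma^F$ (ordinary Clifford theory over the normal subgroup $\L(\sigma)^F$, which is where \cite{Spa17}'s machinery and the results quoted from \cite{Bro-Ruh23} enter), yields the desired $\iso{\G^F}$.

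The main obstacle I expect is precisely this last transport: one must verify that the parametrisation bijections are not merely character-set bijections but lift to isomorphisms of character triples \emph{over the full normaliser} $\G^F_\sigma$ (not just over $\L(\sigma)^F$), and that they are simultaneously compatible with the action of $\aut_\mathbb{F}(\G^F)$ — i.e. one needs the parametrisations to be equivariant \emph{and} to respect the relevant cohomology classes / projective representations. In type $\mathbf{A}$ this is where the restriction on $\ell \nmid (q\pm1, n+1)$ (or the workaround of Remark \ref{rmk:Condition on prime}) is used, to guarantee that the relevant $e$-split Levi subgroups, their relative Weyl groups, and the extensions of unipotent characters behave as in the simply-connected case without extra central obstructions; and it is where the difficult stability/extension results about unipotent characters and their behaviour under automorphisms — the technical heart of the paper — have to be invoked. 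Once those are in place, the assembly into the global equivariant bijection with the $\iso{\G^F}$-property is formal.
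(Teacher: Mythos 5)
Your proposal matches the paper's proof strategy: decompose $\CL_{\rm u}^d(B)_\pm$ by $\G^F$-classes of unipotent $e$-cuspidal pairs $(\M,\mu)$, build a length-parity-flipping involution $\Delta$ on triples $(\sigma,\M,\mu)$, lift it to a bijection on character sets via the compatible parametrisations from the earlier sections (Proposition~\ref{prop:Parametrisation for e-chains stabilisers}, resting on Theorem~\ref{thm:Main Parametrisation for simple groups} and Proposition~\ref{prop:iEBC going up}), and assemble the global $\Lambda$; this is exactly the content of Theorem~\ref{thm:iUnipotent}, of which Theorem~\ref{thm:Main iUnipotent} is then a corollary. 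A few corrections to your sketch are needed, though. First, the cone point is not $\G$ at the top (it is always the initial term of every $e$-chain and can never be added or removed) but $\M$ at the bottom: the involution sends $(\sigma,\M,\mu)$ to $(\sigma+\M,\M,\mu)$ when $\L(\sigma)>\M$ and to $(\sigma-\M,\M,\mu)$ when $\L(\sigma)=\M$, and it is well-defined precisely because $\M<\G$ is imposed. Relatedly, the two-case definition of $\uch(\G_\sigma^F,(\M,\mu))$ in Definition~\ref{def:e-chains character sets}, distinguishing $\L(\sigma)>\M$ from $\L(\sigma)=\M$ and choosing the pseudo-unipotent sets $\ps_{\L(\sigma)}(\mu)$ versus $\ps_{\L(\sigma-\L(\sigma))}(\mu)$ accordingly, is precisely what makes the character sets on the two sides of $\Delta$ match under Proposition~\ref{prop:iEBC going up}; your sketch glosses over this and therefore over why the character-level bijection in Proposition~\ref{prop:Parametrisation for e-chains stabilisers} is not \emph{trivial} (the groups $\G^F_\sigma$ and $\G^F_{\sigma+\M}$ are genuinely distinct, with the latter a proper subgroup of the former). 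Second, the restriction $\ell\nmid(q\pm1,n+1)$ in type $\mathbf{A}$ is used only at the last stage, in upgrading the $\iso{\G^F_\sigma}$-isomorphisms of Theorem~\ref{thm:iUnipotent} to the $\iso{\G^F}$-isomorphisms of Theorem~\ref{thm:Main iUnipotent} via Remark~\ref{rmk:Condition on prime}: it guarantees $\L=\c_\G^\circ(\z(\L)^F_\ell)$ for $e$-split Levis, hence the defect-group centraliser containment needed to apply \cite[Lemma 2.11]{Ros22}, rather than being a hypothesis built into the parametrisations themselves.
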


The above theorem provides an adaptation of Sp\"ath's Character Triple Conjecture to the framework of Deligne--Lusztig theory for the unipotent characters of finite reductive groups. Theorem \ref{thm:Main iUnipotent} also offers further evidence for the validity of \cite[Conjecture D]{Ros-Generalized_HC_theory_for_Dade}, in fact the set $\CL_{\rm u}^d(B)_\pm$ introduced above is a subset of the set of quadruples $\CL^d(B)_\pm$ considered in \cite[Conjecture D]{Ros-Generalized_HC_theory_for_Dade} which is identified by only selecting unipotent $e$-cuspidal pairs $(\M,\mu)$ among those appearing in such quadruples.

Next, we obtain a formula for counting the number of unipotent characters of $\ell$-defect $d$ in the Brauer $\ell$-block $B$ in terms of local invariants associated to $e$-local structures. For each $e$-chain $\sigma$ of $(\G,F)$ with positive length, we define $\k^d_{\rm u}(B_\sigma)$ to be the number of characters belonging to one of the character sets $\uch^d(B_\sigma,(\M,\mu))$ for some unipotent $e$-cuspidal pair $(\M,\mu)$ of $(\L(\sigma),F)$ up to $\G^F_\sigma$-conjugation (see also \eqref{eq:number of local characters}). Furthermore, let $\k^d_{\rm u}(B)$ and $\k_{\rm c, u}^d(B)$ be the number of irreducible characters with $\ell$-defect $d$ and belonging to the Brauer $\ell$-block $B$ that are unipotent and unipotent $e$-cuspidal respectively. Then, by using the bijection given by Theorem \ref{thm:Main iUnipotent} we can determine the difference $\k^d_{\rm u}(B)-\k^d_{\rm c, u}(B)$ in terms of an alternating sum involving the terms $\k^d_{\rm u}(B_\sigma)$ arising from the $e$-local structure $\G_\sigma^F$.

\begin{theoA}
\label{thm:Main Unipotent}
For every Brauer $\ell$-block $B$ of $\G^F$ and every non-negative integer $d$, we have
\[\k^d_{\rm u}(B)-\k^d_{\rm c,u}(B)=\sum\limits_{\sigma}(-1)^{|\sigma|+1}\k_{\rm u}^d(B_\sigma)\]
where $\sigma$ runs over a set of representatives for the action of $\G^F$ on $\CL_e(\G,F)_{>0}$.
\end{theoA}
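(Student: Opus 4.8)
The plan is to deduce Theorem~\ref{thm:Main Unipotent} from Theorem~\ref{thm:Main iUnipotent} by a counting argument, exactly in the way that the Character Triple Conjecture implies the numerical form of Dade's Conjecture. The only input I would use from Theorem~\ref{thm:Main iUnipotent} is the \emph{existence} of a bijection
\[\Lambda:\CL^d_{\rm u}(B)_+/\G^F\to\CL^d_{\rm u}(B)_-/\G^F;\]
neither the $\aut_\mathbb{F}(\G^F)_B$-equivariance nor the block isomorphism of character triples is needed here. In particular the two orbit sets have the same finite cardinality, and the strategy is to compute each of them by sorting the quadruples $(\sigma,\M,\mu,\vartheta)$ according to the $\G^F$-orbit of the $e$-chain $\sigma$ and the parity of its length $|\sigma|$.

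First I would treat the $e$-chains $\sigma$ of positive length. Fixing a representative $\sigma$ of each such $\G^F$-orbit, the standard orbit-counting bijection identifies the $\G^F$-orbits of quadruples whose chain lies in the orbit of $\sigma$ with the $\G^F_\sigma$-orbits of triples $(\M,\mu,\vartheta)$, where $(\M,\mu)$ is a unipotent $e$-cuspidal pair of $(\L(\sigma),F)$ and $\vartheta\in\uch^d(B_\sigma,(\M,\mu))$; moreover the condition $\M\neq\G$ is automatic because $\L(\sigma)\subsetneq\G$ when $|\sigma|>0$. By the definition of $\k^d_{\rm u}(B_\sigma)$ (and of the passage $\omega\mapsto\omega^\bullet$, which identifies a quadruple-orbit with the orbit of the pair $(\sigma,\vartheta)$) this count is exactly $\k^d_{\rm u}(B_\sigma)$. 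Such a chain contributes to $\CL^d_{\rm u}(B)_+/\G^F$ if $|\sigma|$ is even and to $\CL^d_{\rm u}(B)_-/\G^F$ if $|\sigma|$ is odd. There remains one further $\G^F$-orbit, that of the trivial chain $\sigma_0=\{\G\}$, which lies in $\CL_e(\G,F)_+$ since $(-1)^{0}=1$; as $\G^F_{\sigma_0}=\G^F$ acts trivially on $\irr(\G^F)$, the corresponding orbits of quadruples $(\sigma_0,\M,\mu,\vartheta)$ are in bijection with the unipotent characters of $\G^F$ of $\ell$-defect $d$ lying in $B$ that belong to the $e$-Harish-Chandra series of some \emph{proper} $e$-cuspidal pair $(\M,\mu)$ of $(\G,F)$.

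To finish I would identify this last set with the set of \emph{non-$e$-cuspidal} unipotent characters of $\ell$-defect $d$ in $B$, which has cardinality $\k^d_{\rm u}(B)-\k^d_{\rm c,u}(B)$: every unipotent character of $\G^F$ belongs to a unique unipotent $e$-Harish-Chandra series $\E(\G^F,(\M,\mu))$, it is $e$-cuspidal exactly when $\M=\G$, and the compatibility of Brauer blocks with $e$-Harish-Chandra series (recalled earlier in the paper, and where the hypotheses on $\ell$ and on the type of $\G$ enter) guarantees that the sets $\uch^d(B_{\sigma_0},(\M,\mu))$ single out precisely the characters of the correct block and defect. Collecting the three contributions and using $|\CL^d_{\rm u}(B)_+/\G^F|=|\CL^d_{\rm u}(B)_-/\G^F|$ gives
\[\bigl(\k^d_{\rm u}(B)-\k^d_{\rm c,u}(B)\bigr)+\sum_{\sigma:\,|\sigma|\text{ even}}\k^d_{\rm u}(B_\sigma)=\sum_{\sigma:\,|\sigma|\text{ odd}}\k^d_{\rm u}(B_\sigma),\]
with $\sigma$ running over representatives of $\G^F$ on $\CL_e(\G,F)_{>0}$, and rearranging this equality yields the alternating sum $\sum_\sigma(-1)^{|\sigma|+1}\k^d_{\rm u}(B_\sigma)$ of the statement. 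I expect the only delicate point to be bookkeeping rather than a genuinely new difficulty: one must check that $\omega\mapsto\omega^\bullet$ is a bijection compatible with the definition of $\k^d_{\rm u}(B_\sigma)$ (equivalently, that a character of $\G^F_\sigma$ lies in at most one of the sets $\uch^d(B_\sigma,(\M,\mu))$), that every orbit is counted on the correct side, and that the trivial-chain term genuinely recovers $\k^d_{\rm u}(B)-\k^d_{\rm c,u}(B)$.
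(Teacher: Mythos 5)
Your argument is correct and matches the paper's own proof essentially step for step: both use only the existence of the bijection from Theorem~\ref{thm:Main iUnipotent} to equate $|\CL^d_{\rm u}(B)_+/\G^F|$ with $|\CL^d_{\rm u}(B)_-/\G^F|$, compute each side by sorting quadruples according to the $\G^F$-orbit of $\sigma$ (invoking Lemma~\ref{lem:conjugation problem} so the count reduces to $\k^d_{\rm u}(B_\sigma)$), isolate the trivial chain $\sigma_0=\{\G\}$ whose contribution is $\k^d_{\rm u}(B)-\k^d_{\rm c,u}(B)$ by the partition of unipotent characters into $e$-Harish-Chandra series and the block compatibility of Cabanes--Enguehard, and rearrange. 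The "delicate points" you flag at the end are exactly the ones the paper handles via Lemma~\ref{lem:conjugation problem} and the triviality of $\z(\G^*)^{F^*}$ (so $\ps_\G(\mu)=\{\mu\}$), so nothing is missing.
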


We point out that the restriction on the prime $\ell$ made for simplification before Theorem \ref{thm:Main iUnipotent} only concerns the condition on isomorphisms of character triples and hence does not affect Theorem \ref{thm:Main Unipotent}. As before, this result provides an adaptation of Dade's Conjecture to the framework of Deligne--Lusztig theory for the unipotent characters of finite reductive groups and gives new evidence in favour of \cite[Conjecture C]{Ros-Generalized_HC_theory_for_Dade}. The necessity for the introduction of the corrective term $\k^d_{\rm c,u}(B)$ in the equality of Theorem \ref{thm:Main Unipotent} can be understood as an analogue to the exclusion of the case of blocks with central defect in the statement of Dade's Conjecture or, depending on the formulation under consideration, of the case where $d=0$. We refer the reader to the more detailed discussion given in the paragraph following Definition \ref{def:e-chains character sets}. Finally, we mention that Theorem \ref{thm:Main Unipotent} also provides evidence for a positive answer to a question recently posed by Brou\'e \cite{Broue22}.

It is particularly interesting to notice that, to the author's knowledge, Theorem \ref{thm:Main Unipotent} cannot be obtained directly using techniques available at the present time, but only as a consequence of the existence of $\G^F$-block isomorphisms of character triples as those considered in Theorem \ref{thm:Main iUnipotent}. In fact, while Deligne--Lusztig theory allows us to control the representation theory of finite reductive groups, it is not sufficient to control the representation theory of $e$-chain stabilisers $\G^F_\sigma$. However, observe that the stabiliser $\G_\sigma^F$ contains the finite reductive group $\L(\sigma)^F$ as a normal subgroup. Therefore, we can first use Deligne--Lusztig theory to study the characters of $\L(\sigma)^F$ and then apply Clifford theory via $\G^F$-block isomorphisms of character triples to control the characters of $\G^F_\sigma$ (see Proposition \ref{prop:iEBC going up} and Proposition \ref{prop:Parametrisation for e-chains stabilisers} for further details).

In order to achieve the latter step, we need to make Deligne--Lusztig theory and, more precisely, $e$-Harish-Chandra theory for unipotent characters compatible with $\G^F$-block isomorphisms of character triples. This ideas was first suggested by the author in \cite[Parametrisation B]{Ros-Generalized_HC_theory_for_Dade} and further studied in \cite{Ros-Clifford_automorphisms_HC}. Our next result, which is a key ingredient in the proofs of Theorem \ref{thm:Main iUnipotent} and Theorem \ref{thm:Main Unipotent}, establishes this conjectured parametrisation in the unipotent case under the assumption specified above. This can also be seen as an extension of the parametrisation introduced by Brou\'e, Malle and Michel in \cite[Theorem 3.2 (2)]{Bro-Mal-Mic93} to the language of $\G^F$-block isomorphisms of character triples.

\begin{theoA}
\label{thm:Main Parametrisation for simple groups}
For every unipotent $e$-cuspidal pair $(\L,\lambda)$ of $(\G,F)$ there exists an $\aut_\mathbb{F}(\G^F)_{(\L,\lambda)}$-equivariant bijection
\[\Omega_{(\L,\lambda)}^\G:\E\left(\G^F,(\L,\lambda)\right)\to\irr\left(\n_\G(\L)^F\enspace\middle|\enspace\lambda\right)\]
that preserves the $\ell$-defect of characters and such that
\[\left(X_\chi,\G^F,\chi\right)\iso{\G^F}\left(\n_{X_\chi}(\L),\n_{\G}(\L)^F,\Omega_{(\L,\lambda)}^\G(\chi)\right)\]
for every $\chi\in\E(\G^F,(\L,\lambda))$ and where $X:=\G^F\rtimes \aut_\mathbb{F}(\G^F)$.
\end{theoA}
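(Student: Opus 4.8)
The bijection underlying the statement should be the one obtained by composing the parametrisation of Brou\'e--Malle--Michel with Clifford theory above $\lambda$. By \cite[Theorem 3.2]{Bro-Mal-Mic93} the $e$-Harish-Chandra series $\E(\G^F,(\L,\lambda))$ is in bijection with $\irr(W_{\G^F}(\L,\lambda))$, where $W_{\G^F}(\L,\lambda)=\n_{\G^F}(\L,\lambda)/\L^F$ is the relative Weyl group; this bijection is realised through the endomorphism algebra of the Deligne--Lusztig induced module $R_\L^\G(\lambda)$, which for unipotent $\lambda$ is isomorphic to the group algebra of $W_{\G^F}(\L,\lambda)$. On the other hand $\lambda$ is invariant in $\n_\G(\L)^F$ and extends to it, the relevant cohomological obstruction being known to vanish for unipotent characters; fixing such an extension $\wt\lambda$ and applying Gallagher's theorem produces a bijection $\irr(W_{\G^F}(\L,\lambda))\to\irr(\n_\G(\L)^F\mid\lambda)$, $\eta\mapsto\wt\lambda\cdot\eta$. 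The composite is my candidate for $\Omega_{(\L,\lambda)}^\G$, and the fact that it preserves $\ell$-defect follows from the degree formula for $e$-Harish-Chandra series together with the hypothesis that $\L$ is an $e$-split Levi subgroup, which forces the $\ell$-adic valuations of $\chi(1)$ and of $\wt\lambda(1)\eta(1)$ to differ exactly by $\log_\ell|\G^F|_\ell-\log_\ell|\n_\G(\L)^F|_\ell$.

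The next step is to check that $\Omega_{(\L,\lambda)}^\G$ can be chosen $\aut_\mathbb{F}(\G^F)_{(\L,\lambda)}$-equivariant. The group $\aut_\mathbb{F}(\G^F)$ permutes the unipotent $e$-cuspidal pairs and, fixing $(\L,\lambda)$, acts compatibly on $\E(\G^F,(\L,\lambda))$, on $W_{\G^F}(\L,\lambda)$ and on $\irr(\n_\G(\L)^F\mid\lambda)$; the needed compatibility of the Brou\'e--Malle--Michel bijection with this action is exactly the type of statement studied in \cite{Ros-Clifford_automorphisms_HC}, and in types $\mathbf{A}$, $\mathbf{B}$, $\mathbf{C}$ it can be verified from the explicit description of the relevant automorphisms (field and diagonal automorphisms, and the graph automorphism in type $\mathbf{A}$), of the relative Weyl groups (which in these types are products of symmetric groups and complex reflection groups of the form $G(d,1,m)$), and of their actions on irreducible characters. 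A Frattini argument based on the uniqueness of the $e$-cuspidal support of $\chi$ then gives $X_\chi=\G^F\,\n_{X_\chi}(\L)$ for $X=\G^F\rtimes\aut_\mathbb{F}(\G^F)$, so that $\n_{X_\chi}(\L)/\n_\G(\L)^F\cong X_\chi/\G^F$ and the two triples in the conclusion can be compared.

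The heart of the proof is to upgrade $\Omega_{(\L,\lambda)}^\G$ to a $\G^F$-block isomorphism of character triples, i.e. to verify $\bigl(X_\chi,\G^F,\chi\bigr)\iso{\G^F}\bigl(\n_{X_\chi}(\L),\n_\G(\L)^F,\Omega_{(\L,\lambda)}^\G(\chi)\bigr)$ for every $\chi$. The condition on central characters is immediate, since unipotent characters of $\G^F$ are trivial on $\z(\G^F)$ and, for the groups under consideration, $\z(\G^F)\leq\L^F\leq\n_\G(\L)^F$, so the characters over $\lambda$ are trivial there as well. The block condition reduces to the compatibility of $e$-Harish-Chandra theory with $\ell$-block theory (after Cabanes--Enguehard and Kessar--Malle): the Brauer block of $\chi$ is governed by the $\G^F$-class of $(\L,\lambda)$ in the way that matches the block of $\Omega_{(\L,\lambda)}^\G(\chi)$ over $\n_\G(\L)^F$ with the block of $\lambda$ over $\L^F$. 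The essential ingredient is then the construction, for each $\chi$, of projective representations of $X_\chi$ and of $\n_{X_\chi}(\L)$ affording $\chi$ and $\Omega_{(\L,\lambda)}^\G(\chi)$ and having equal factor sets after inflation from $X_\chi/\G^F$. For this one uses maximal extendibility: $\chi$ extends to its stabiliser in $X$, and likewise $\lambda$, hence $\Omega_{(\L,\lambda)}^\G(\chi)$, extends to the corresponding stabiliser in $\n_\G(\L)^F\rtimes\aut_\mathbb{F}(\G^F)$ --- the latter resting on extendibility of unipotent characters of Levi subgroups along field automorphisms, as studied in \cite{Bro-Ruh23} --- and the two obstruction cocycles coincide because both are pulled back from the action of $\aut_\mathbb{F}(\G^F)_{(\L,\lambda)}$ on the relative Weyl group data. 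To control the centre one works through a regular embedding $\G\hookrightarrow\wt\G$, using that unipotent characters restrict irreducibly from $\wt\G^F$ to $\G^F$, and then descends; the transitivity and ``going-up'' properties of $\iso{\G^F}$ recorded in \cite{Spa17} assemble the pieces.

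I expect the main obstacle to be precisely this last step: producing the compatible projective representations over the full group $X$, equivalently matching the two obstruction $2$-cocycles. The bijective and equivariant statements follow from established parametrisations, but the cocycle comparison requires controlling how outer automorphisms interact with the Deligne--Lusztig realisation of $e$-Harish-Chandra series, and it is here that the hypothesis $\ell\nmid(q\pm1,n+1)$ in type $\mathbf{A}$ intervenes, ensuring that the diagonal automorphisms --- that is, the action of $\wt\G^F/\G^F\z(\wt\G)^F$ --- do not obstruct the comparison. Rather than a uniform argument I would anticipate a case analysis across types $\mathbf{A}$, $\mathbf{B}$, $\mathbf{C}$, using explicit information on unipotent characters, their generic degrees and their behaviour under automorphisms, and exploiting that the relevant relative Weyl groups and their automorphisms are completely understood; this recovers and refines to the level of character triples the parametrisation of \cite[Theorem 3.2]{Bro-Mal-Mic93}, in the spirit of ``Parametrisation B'' of \cite{Ros-Generalized_HC_theory_for_Dade}.
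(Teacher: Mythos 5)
Your overall architecture is the same as the paper's: define $\Omega_{(\L,\lambda)}^\G$ by composing the Brou\'e--Malle--Michel parametrisation with Clifford theory above an invariant extension of $\lambda$, get defect preservation from the degree formula, get equivariance from Cabanes--Sp\"ath, and upgrade to $\iso{\G^F}$ via projective representations built from maximal extendibility, working through a regular embedding $\G\hookrightarrow\wt\G$ and using the going-up and Butterfly properties of $\iso{\G^F}$ from \cite{Spa17}. You have also correctly located the crux at the factor-set comparison. So the approach is right, and most of it is the paper's approach.

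Two substantive differences are worth flagging. First, you attribute the hypothesis $\ell\nmid(q\pm1,n+1)$ in type $\mathbf{A}$ to this theorem, as a device for taming diagonal automorphisms; in fact the paper does not impose that hypothesis in the proof of this parametrisation at all --- diagonal automorphisms are handled uniformly via the regular embedding and the semidirect action of $\mathcal{K}=\ker(i^*)^F$ --- and the assumption on $(q\pm1,n+1)$ only enters later, in Remark~\ref{rmk:Condition on prime}, to promote $\G^F_\sigma$-block isomorphisms to $\G^F$-block isomorphisms in the proof of Theorem~\ref{thm:Main iUnipotent}. Second, you anticipate a type-by-type computation to match the two obstruction $2$-cocycles ``pulled back from the relative Weyl group data''; the paper avoids this by reducing the factor-set comparison to checking that the Gallagher linear characters $\mu_x^{\rm glo}$ and $\mu_x^{\rm loc}$ agree under restriction, which is deduced not from explicit cocycle calculation but from the $\bigl(\n_{\wt\G}(\L)^F(\wt\G^F\mathcal{A})_{(\L,\lambda)}\rtimes\mathcal{K}\bigr)_{\wt\lambda}$-equivariance of the lifted bijection $\wt\Omega$ (Lemma~\ref{lem:Equivariant over Omega}), which is itself a formal consequence of the equivariance of $\Omega$ and the commuting square with restriction from $\wt\G^F$ (Proposition~\ref{prop:Bijection commuting diagram}). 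This $\mathcal{K}$-equivariance trick is the structural idea your sketch does not quite name, and it is what keeps the argument uniform across types. The only genuinely type-sensitive inputs are the maximal extendibility statements from \cite{Bro-Spa20}, \cite{Bro22}, \cite{Bro-Ruh23} (plus cyclicity of $\mathcal{A}$ in types $\mathbf{B}$, $\mathbf{C}$), exactly as you anticipated.
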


The proof of Theorem \ref{thm:Main Parametrisation for simple groups}, and therefore of Theorem \ref{thm:Main iUnipotent} and Theorem \ref{thm:Main Unipotent}, partially relies on certain conditions on the extendibility of characters of $e$-split Levi subgroups that were first introduced to settle the inductive conditions for the McKay Conjecture and the Alperin--McKay Conjecture, and then further studied in the context of Parametrisation B of \cite{Ros-Generalized_HC_theory_for_Dade} (see the exact statement given in \cite[Definition 5.2]{Ros-Clifford_automorphisms_HC}). These conditions were obtain, under certain assumptions, for groups of type $\bf{A}$, $\bf{B}$ and $\bf{C}$ in the papers \cite{Bro-Spa20}, \cite{Bro22} and \cite{Bro-Ruh23} respectively. Nonetheless, a version of these results is expected to hold in general and hence we believe that the above theorems, obtained here for types $\bf{A}$, $\bf{B}$ and $\bf{C}$ with respect to an odd prime $\ell$, will extend to the general case as well.

\subsection{Structure of the paper}

The paper is organised as follows. In Section \ref{sec:Notations} we introduce the necessary notation and recall the main definitions and results used throughout the paper. Furthermore, in Section \ref{sec:Pseudo-unipotent} we introduce the notion of pseudo-unipotent character (see Definition \ref{def:Pseudo unipotent}) and prove a result on the regularity of blocks covering those containing such characters. Next, in Section \ref{sec:Compatibility with isomorphisms of character triples} we start working towards a proof of Theorem \ref{thm:Main Parametrisation for simple groups}. First, in Section \ref{sec:equivarance and maximal ext} we consider certain equivariance properties that can be established in the presence of extendibility conditions for characters of $e$-split Levi subgroups. Here, we also present a candidate for the bijection $\Omega_{(\L,\lambda)}^\G$ required by Theorem \ref{thm:Main Parametrisation for simple groups}. Next, in Section \ref{sec:Construction of isomorphisms of character triples} we construct the required $\G^F$-block isomorphisms of character triples. Using these results, we can then prove Theorem \ref{thm:Main Parametrisation for simple groups} in Section \ref{sec:Proof of Main parametrisation}. The following step is to extend the parametrisation of unipotent $e$-Harish-Chandra series in the group $\G$, as given by Theorem \ref{thm:Main Parametrisation for simple groups}, to a parametrisation of pseudo-unipotent $e$-Harish-Chandra series in $F$-stable Levi subgroups $\K$ of $(\G,F)$. This is done in Theorem \ref{thm:Parametrisation for reductive groups}. Once this is established, in Section \ref{sec:above} we exploit the theory of $\G^F$-block isomorphisms to obtain bijections above $e$-Harish-Chandra series that are required to control the representation theory of the $e$-chain stabilisers $\G_\sigma^F$. A more detailed analysis of the characters of $\G_\sigma^F$ is carried out in Section \ref{sec:e-chains}. In particular, we obtain a parametrisation of the character sets $\uch^d(B_\sigma,(\M,\mu))$ in Proposition \ref{prop:Parametrisation for e-chains stabilisers}. Finally, in Section \ref{sec:Proof of iUnipotent} and Section \ref{sec:Counting} we apply these results to prove Theorem \ref{thm:Main iUnipotent} and Theorem \ref{thm:Main Unipotent} respectively.

\section{Notation and background material}
\label{sec:Notations}

\subsection{Characters and blocks of finite groups}

We recall some standard notation from representation theory of finite groups as can be found in \cite{Isa76} and \cite{Nav98}, for instance. Let $\irr(G)$ the set of ordinary irreducible characters. If $N\unlhd G$ and $\vartheta\in\irr(N)$, then we denote by $\irr(G\mid \vartheta)$ the set of irreducible characters of $G$ that lie above $\vartheta$. More generally, if $\mathcal{S}$ is a subset of irreducible characters of $N$, then we denote by $\irr(G\mid \mathcal{S})$ the union of the sets $\irr(G\mid \vartheta)$ for $\vartheta\in\mathcal{S}$, that is, the set of irreducible characters of $G$ that lie above some character in the set $\mathcal{S}$.

Next, we denote by $G_\vartheta$ the stabiliser of the irreducible character $\vartheta\in\irr(N)$ under the conjugacy action of $G$ and say that $\vartheta$ is $G$-invariant if $G=G_\vartheta$. In this case, we say that $(G,N,\vartheta)$ is a \textit{character triple}. These objects provide important information in the study of Clifford theory and play a crucial role in many aspects of the local-global conjectures. Of paramount importance is the introduction of certain binary relations on the set of character triples. We refer the reader to \cite[Chapter 5 and 10]{Nav18} and \cite{Spa18} for a more detailed introduction to these ideas and for the necessary background on projective representations. The binary relation considered here was introduced in \cite[Definition 3.6]{Spa17} and is known as \textit{$N$-block isomorphism of character triples}, denoted by $\iso{N}$. This equivalence relation has further been studied in \cite{Ros22}.

In order to construct $N$-block isomorphisms of character triples, it is often useful to prove certain results on the extendibility of characters. Here, we introduce the notion of \textit{maximal extendibility} (see \cite[Definition 3.5]{Mal-Spa16}) that will be considered in the following sections. Let $N\unlhd G$ be finite groups and consider $\mathcal{S}$ a subset of irreducible characters of $N$. Then, we say that \emph{maximal extendibility} holds for the set $\mathcal{S}$ with respect to the inclusion $N\unlhd G$ if every character $\vartheta\in\mathcal{S}$ extends to its stabiliser $G_\vartheta$. More precisely, we can specify an \emph{extension map}
\begin{equation}
\label{eq:Maximal extendibility}
\Lambda:\mathcal{S}\to\coprod\limits_{N\leq H\leq G}\irr(H)
\end{equation}
that sends each character $\vartheta\in\mathcal{S}$ to an extension $\Lambda(\vartheta)$ of $\vartheta$ to the stabiliser $G_\vartheta$.

Next, we consider modular representation theory with respect to a fixed prime number $\ell$. For $\chi\in\irr(G)$, there exist unique non-negative integers $d(\chi)$, called the \textit{$\ell$-defect} of $\chi$, such that $\ell^{d(\chi)}=|G|_\ell/\chi(1)_\ell$ and where for an integer $n$ we denote by $n_\ell$ the largest power of $\ell$ that divides $n$. For any $d\geq 0$, let $\irr^d(G)$ be the set of irreducible characters $\chi$ of $G$ that satisfy $d(\chi)=d$ and denote by $\k^d(G)$ its cardinality. Associated to the prime $\ell$, we also have the set of \textit{Brauer $\ell$-blocks} of $G$. Each block is uniquely determined by the central functions $\lambda_B$ (see \cite[p. 49]{Nav98}). For every $\chi\in\irr(G)$, we denote by $\bl(\chi)$ the unique block that satisfies $\chi\in\irr(\bl(\chi))$. Furthermore, if $H\leq G$ and $b$ is a block of $H$, then $b^G$ denotes the block of $G$ obtained via Brauer's induction (when it is defined). If $B$ is a block of $G$ and $d\geq 0$, then let $\irr^d(B)$ be the set of irreducible characters belonging to the block $B$ and having defect $d$. The cardinality of $\irr^d(B)$ is denoted by $\k^d(B)$.

We conclude this introductory section with an analogue of \cite[Problem 5.3]{Isa76} for blocks that will be used in the sequel.

\begin{lem}
\label{lem:Block induction and linear characters}
Let $H\leq G$ be finite groups and consider blocks $b$ of $H$ and $B$ of $G$. If $\zeta$ is a linear character of $G$, then:
\begin{enumerate}
\item there are blocks $b\cdot\zeta_H$ of $H$ and $B\cdot\zeta$ of $G$ satisfying
\[\irr(b\cdot\zeta_H)=\{\psi\zeta_H\mid \psi\in\irr(b)\}\hspace{15pt}\text{and}\hspace{15pt}\irr(B\cdot\zeta)=\{\chi\zeta\mid \chi\in\irr(B)\};\]
\item If $b^G=B$, then $(b\cdot\zeta_H)^G=B\cdot \zeta$.
\end{enumerate}
\end{lem}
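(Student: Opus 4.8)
The plan is to reduce both parts to the classical statements about ordinary characters, namely \cite[Problem 5.3]{Isa76} together with the standard fact that twisting by a linear character $\zeta$ permutes the irreducible characters of $G$ and induces a permutation of the blocks. First I would recall that for a linear character $\zeta$ of $G$, the map $\psi\mapsto\psi\zeta$ is a bijection of $\irr(G)$, and moreover it sends $\irr(B)$ onto a subset of $\irr(B')$ for some block $B'$: this is because the idempotent $e_B$ in $\z(\overline{\mathbb{F}_\ell G})$ is multiplied by the central unit attached to $\zeta$, so the product $e_B\cdot(\text{twist})$ is again a primitive central idempotent. Equivalently, one checks on central characters: if $\lambda_B$ is the central function defining $B$, then $\lambda_{B'}(\widehat{C}) = \zeta(g_C)\lambda_B(\widehat C)$ for $C$ a conjugacy class with representative $g_C$ (the value $\zeta(g_C)$ being constant on $C$ since $\zeta$ is linear hence a class function of $\ell$-regular support — more precisely one uses that $\zeta$ restricted to $\ell$-regular elements is what matters for the block idempotent). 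This defines $B\cdot\zeta$ and, applied to $H$ and $\zeta_H:=\zeta|_H$, defines $b\cdot\zeta_H$; part (i) is then immediate from the displayed description of $\irr(B\cdot\zeta)$ and $\irr(b\cdot\zeta_H)$.

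For part (ii), I would argue directly with Brauer's induction formula for blocks in terms of central characters. Recall $b^G = B$ means that the central character $\lambda_B$ of $B$ is obtained from $\lambda_b$ via the map $\widehat{\mathrm{Br}}$ induced by sending a class sum of $G$ to the sum of the intersecting class sums of $H$; concretely, $\lambda_B(\widehat{C}^G) = \lambda_b\big((\widehat{C}^G)_H\big)$ for all $\ell$-regular classes $C$ of $G$, where $(\widehat{C}^G)_H$ denotes the image in $\z(\overline{\mathbb{F}_\ell}H)$. Now apply this with the twisted blocks: using part (i), $\lambda_{B\cdot\zeta}$ differs from $\lambda_B$ by multiplication by $\zeta$ on class representatives, and likewise $\lambda_{b\cdot\zeta_H}$ differs from $\lambda_b$ by multiplication by $\zeta_H=\zeta|_H$. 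Since $\zeta$ is linear, $\zeta(hg) = \zeta(h)\zeta(g)$, and the class-sum bookkeeping in $\widehat{\mathrm{Br}}$ respects this scalar: each $H$-class inside a $G$-class $C$ has representatives on which $\zeta$ takes the same value $\zeta(g_C)$. Chasing this through the defining identity for $b^G=B$ yields $\lambda_{(b\cdot\zeta_H)^G} = \lambda_{B\cdot\zeta}$, hence $(b\cdot\zeta_H)^G = B\cdot\zeta$ since a block is determined by its central character.

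The main obstacle — really the only subtlety — is to set up part (ii) so that the scalar twist by $\zeta$ genuinely commutes with Brauer's induction of blocks, i.e. to verify the compatibility at the level of the $\z(\overline{\mathbb{F}_\ell}H)\to\z(\overline{\mathbb{F}_\ell}G)$ map on central characters. The cleanest route is probably not to manipulate $\lambda_B$ directly but to use the characterisation of $b^G$ via a defect-group/Brauer-pair argument, or alternatively to invoke \cite[Problem 5.3]{Isa76} as a black box after observing that everything reduces to the group algebra statement: the linear character $\zeta$ gives an algebra automorphism of $\overline{\mathbb{F}_\ell}G$ (and of $\overline{\mathbb{F}_\ell}H$, compatibly), and block induction is defined purely in terms of the inclusion $\overline{\mathbb{F}_\ell}H\hookrightarrow\overline{\mathbb{F}_\ell}G$, which this automorphism respects. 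Once that functoriality is in place, part (ii) follows formally. I expect the write-up to be short: a paragraph recalling the twist-by-$\zeta$ permutation of blocks for part (i), and a paragraph invoking the functoriality of $(-)^G$ under this automorphism for part (ii).
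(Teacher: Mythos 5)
Your main argument for part (ii) --- the central-character computation exploiting that the class function $\zeta$ is constant, with value $\zeta(g_C)$, on each $H$-class contained in a $G$-class $C$, so that the twist by $\zeta$ passes through the sum defining $\lambda_b^G$ --- is exactly the paper's proof, and part (i) is handled the same way by invoking the known twist-by-$\zeta$ permutation of blocks (the paper cites \cite[Lemma 2.1]{Riz18}). Your alternative sketch via the compatible algebra automorphism $g\mapsto\overline{\zeta(g)}\,g$ of the modular group algebras of $G$ and $H$ is a valid and slightly slicker packaging of the same fact, but the paper simply writes out the direct verification.
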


\begin{proof}
The first point is \cite[Lemma 2.1]{Riz18}. Next, let $g\in G$ and denote by $\cl_G(g)$ the $G$-conjugacy class of $g$ and by $\cl_G(g)^+$ the corresponding conjugacy class sum in the group algebra. Since the intersection $\cl_G(g)\cap H$ is a union of $H$-conjugacy classes, we can find $h_1,\dots,h_n\in\cl_G(g)\cap H$ such that
\[\cl_G(g)\cap H=\coprod\limits_{i=1}^n\cl_H(h_i)\]
and where $n$ is zero if $\cl_G(g)\cap H$ is empty. In particular, observe that $\zeta(h_i)=\zeta(g)$ since $\lambda$ is a class function of $G$. Now, using the notation of \cite[p.87]{Nav98} we obtain
\begin{align*}
\lambda_{B\cdot\zeta}\left(\cl_G(g)^+\right)&=\lambda_B\left(\cl_G(g)^+\right)\overline{\zeta(g)}
\\
&=\lambda_b^G\left(\cl_G(g)^+\right)\overline{\zeta(g)}
\\
&=\sum\limits_{i=1}^n\lambda_b\left(\cl_H(h_i)^+\right)\overline{\zeta(g)}
\\
&=\sum\limits_{i=1}^n\lambda_b\left(\cl_H(h_i)^+\right)\overline{\zeta_H(h_i)}
\\
&=\sum\limits_{i=1}^n\lambda_{b\cdot\zeta_H}\left(\cl_H(h_i)^+\right)=\lambda_{b\cdot\zeta_H}^G\left(\cl_G(g)\right)
\end{align*}
where for every algebraic integer $\alpha$ of $\mathbb{C}$ we denote by $\overline{\alpha}$ its reduction modulo a maximal ideal containing the prime $\ell$ (see \cite[Chapter 2]{Nav98}). This shows that $B\cdot \zeta=(b\cdot\zeta_H)^G$ and we are done.
\end{proof}

\subsection{Finite reductive groups and unipotent characters}

Let $\G$ be a connected reductive group defined over an algebraic closure of a field of positive characteristic $p$ different from $\ell$ and consider a Frobenius endomorphism $F:\G\to \G$ associated with an $\mathbb{F}_q$-structure for a power $q$ of $p$. The set of $\mathbb{F}_q$-rational points on the variety $\G$ is denoted by $\G^F$ and is called a \textit{finite reductive group}. By abuse of notation we also refer to the pair $(\G,F)$ as a finite reductive group.

Let $\L$ be a Levi subgroup of a parabolic subgroup $\P$ of $\G$ and assume that $\L$ (but not necessarily $\P$) is $F$-stable. Using $\ell$-adic cohomology, Deligne--Lusztig \cite{Del-Lus76} and Lusztig \cite{Lus76} defined a $\mathbb{Z}$-linear map
\[\R_{\L\leq \P}^\G:\mathbb{Z}\irr\left(\L^F\right)\to\mathbb{Z}\irr\left(\G^F\right)\]
with adjoint
\[^*\R_{\L\leq \P}^\G:\mathbb{Z}\irr\left(\G^F\right)\to\mathbb{Z}\irr\left(\L^F\right).\]
The exact definition can be found in \cite[Section 8.3]{Cab-Eng04}. These maps are known to be independent of the choice of the parabolic subgroup $\P$ in almost all cases (see \cite{Bon-Mic10} and \cite{Tay18}) and, in particular, in those considered in this paper. Therefore, we will always omit $\P$ and denote $\R_{\L\leq \P}^\G$ simply by $\R_\L^\G$. Next, using Deligne--Lusztig induction we define the \textit{unipotent characters} of $\G^F$. These are the irreducible characters $\chi$ of $\G^F$ that appear as an irreducible constituent of the virtual character $\R_{\T}^{\G}(1_\T)$ for some $F$-stable maximal torus $\T$ of $\G$. The set of unipotent characters of $\G^F$ is denoted by $\uch(\G^F)$ and its cardinality by $\k_{\rm u}(\G^F)$. Similarly, if $B$ is a block of $\G^F$ and $d$ a non-negative integer, then $\k_{\rm u}^d(B)$ denotes the cardinality of the intersection $\uch(\G^F)\cap \irr^d(B)$.  

\subsection{$e$-Harish-Chandra theory for unipotent characters}

Denote by $e$ the multiplicative order of $q$ modulo $\ell$, if $\ell$ is odd, or modulo $4$, if $\ell=2$. In this section, we collect the main results of $e$-Harish-Chandra theory for unipotent characters. This was first introduced
by Fong and Srinivasan \cite{Fon-Sri86} for classical groups and then further developed by Brou\'e, Malle and Michel \cite{Bro-Mal-Mic93} for unipotent characters. The compatibility of this theory with Brauer $\ell$-blocks was described by Cabanes and Enguehard in \cite{Cab-Eng94} for good primes and completed by Enguehard \cite{Eng00} for bad primes. These results also provide a description of the characters belonging to unipotent blocks (see \cite[Theorem (iii)]{Cab-Eng94}). Another description of these characters was provided by the author in \cite{Ros-Generalized_HC_theory_for_Dade} under certain resctrictions on the prime $\ell$ (see also \cite[Remark 4.14]{Ros-Generalized_HC_theory_for_Dade} for a comparison between the two descriptions). We refer the reader to the monographs \cite{Cab-Eng04} and \cite{Gec-Mal20} for a more complete account of this beautiful theory.

The theory of $\Phi_e$-subgroups that constitutes the foundation of $e$-Harish-Chandra theory was introduced in \cite{Bro-Mal92}. Following their terminology, we say that an $F$-stable torus $\S$ of $\G$ is a \textit{$\Phi_e$-torus} if its order polynomial is a power of the $e$-th cyclotomic polynomial, that is, if $P_{(\S,F)}=\Phi_e^{n}$ for some integer $n$ and where $\Phi_e$ denotes the $e$-th cyclotomic polynomial (see \cite[Definition 13.3]{Cab-Eng04}). Then, we say that a Levi subgroup $\L$ of $\G$ is an \textit{$e$-split Levi subgroup} if there exists a $\Phi_e$-torus $\S$ such that $\L=\c_\G(\S)$. More precisely, we say that $\L$ is an $e$-split Levi subgroup of $(\G,F)$ to emphasise the role of the Frobenius endomorphism $F$. Observe that, for any torus $\T$, there exists a unique maximal $\Phi_e$-torus of $\T$ denoted by $\T_{\Phi_e}$ (see \cite[Proposition 13.5 3.4]{Cab-Eng04}). Then, it can be shown that an $F$-stable Levi subgroup $\L$ of $\G$ is $e$-split if and only if $\L=\c_\G(\z^\circ(\L)_{\Phi_e})$ (see, for instance, \cite[Proposition 3.5.5]{Gec-Mal20}).

Next, recall that $(\L,\lambda)$ is a \textit{unipotent $e$-cuspidal pair} of $(\G,F)$ if $\L$ is an $e$-split Levi subgroup of $(\G,F)$ and $\lambda\in\irr(\L^F)$ satisfies $^*\R_{\M}^\L(\lambda)=0$ for every $e$-split Levi subgroup $\M<\L$. A character $\lambda$ with the property above is said to be a \textit{unipotent $e$-cuspidal character} of $\L^F$. We denote by $\CP(\G,F)$ the set of unipotent $e$-cuspidal pairs of $(\G,F)$ and by $\k_{\rm c,u}(\G^F)$ the number of unipotent $e$-cuspidal characters of $\G^F$. Moreover, we define the \textit{$e$-Harish-Chandra series} associate to the $e$-cuspidal pair $(\L,\lambda)$ to be the set of irreducible constituents of the virtual character $\R_\L^\G(\lambda)$, denoted by $\E(\G^F,(\L,\lambda))$.

Unipotent characters where parametrised by Brou\'e, Malle and Michel \cite[Theorem 3.2]{Bro-Mal-Mic93} by using $e$-Harish-Chandra theory. Their description can be divided into two parts. First, each unipotent character lies in a unique $e$-Harish-Chandra series, that is,
\[\uch\left(\G^F\right)=\coprod\limits_{(\L,\lambda)}\E\left(\G^F,(\L,\lambda)\right)\]
where $(\L,\lambda)$ runs over a set of representatives for the action of $\G^F$ on the set of unipotent $e$-cuspidal pairs of $(\G,F)$ as explained in \cite[Theorem 3.2 (1)]{Bro-Mal-Mic93}. This is a well known fact and will be used throughout the paper without further reference. As a consequence of the partition above, it now remains to parametrise the unipotent $e$-Harish-Chandra series. If $(\L,\lambda)$ is a unipotent $e$-cuspidal pair, we denote by $W_\G(\L,\lambda)^F:=\n_\G(\L)^F_\lambda/\L^F$ the corresponding \textit{relative Weyl group}. Then, \cite[Theorem 3.2 (2)]{Bro-Mal-Mic93} parametrises the characters in an $e$-Harish-Chandra series in terms of the characters in the relative Weyl group by showing the existence of a bijection
\begin{equation}
\label{eq:e-HC series and relative Weyl group}
\irr\left(W_\G(\L,\lambda)^F\right)\to \E(\G^F,(\L,\lambda)).
\end{equation}
In Section \ref{sec:Compatibility with isomorphisms of character triples} we reformulate \eqref{eq:e-HC series and relative Weyl group} in order to obtain Theorem \ref{thm:Main Parametrisation for simple groups}.

Unipotent $e$-Harish-Chandra series are also used to parametrise the so-called \textit{unipotent blocks}, that is, those blocks that contain unipotent characters. This is the main result of \cite{Cab-Eng94}. More precisely, if $\ell$ is odd and good for $\G$, with $\ell\neq 3$ if ${^3{\bf{D}}_4}$ is an irreducible rational component of $(\G,F)$, then for every $\ell$-block $B$ of $\G^F$ there exists a unipotent $e$-cuspidal pair $(\L,\lambda)$, with $(\L,\lambda)$ unique up to $\G^F$-conjugation, such that all the irreducible constituents of $\R_\L^\G(\lambda)$ belongs to the block $B$. In this case, we write $B=b_{\G^F}(\L,\lambda)$ and we also have
\[\uch(\G^F)\cap \irr\left(b_{\G^F}(\L,\lambda)\right)=\E(\G^F,(\L,\lambda)).\]
Moreover, \cite[Proposition 3.3 (ii) and Proposition 4.2]{Cab-Eng94} imply that $\bl(\lambda)^{\G^F}=B$.

\subsection{Pseudo-unipotent characters}
\label{sec:Pseudo-unipotent}

We denote by $(\G^*,F^*)$ a group in duality with $(\G,F)$ with respect to a choice of an $F$-stable maximal torus $\T$ of $\G$ and an $F^*$-stable maximal torus $\T^*$ of $\G^*$. If $\tau:\G_{\rm sc}\to [\G,\G]$ is a simply connected covering (see \cite[Remark 1.5.13]{Gec-Mal20}), then there exists an isomorphisms between the abelian groups
\begin{align*}
\z\left(\G^*\right)^{F^*}&\to\irr\left(\G^F/\tau(\G_{\rm sc})\right)
\\
z &\mapsto \hat{z}_{\G}
\end{align*}
according to \cite[(8.19)]{Cab-Eng04}. Notice that, if $\L$ is an $F$-stable Levi subgroup of $\G$, then its dual $\L^*$ is an $F^*$-stable Levi subgroup of $\G^*$ and we have $\z(\G^*)^{F^*}\leq \z(\L^*)^{F^*}$. In particular, every element $z\in\z(\G^*)^{F^*}$ defines a linear characters of $\hat{z}_{\L}$ and restriction of characters yields the equality
\[(\hat{z}_\G)_{\L^F}=\hat{z}_\L.\]
In the next definition, we consider characters that are obtained by multiplying these linear characters with unipotent characters.

\begin{defin}
\label{def:Pseudo unipotent}
Let $(\K,F)$ be a finite reductive group and consider a Levi subgroup of $\L\leq\K$ and an irreducible character $\theta\in\irr(\L^F)$. We say that $\theta$ is \textit{$(\K,F)$-pseudo-unipotent} if there exists an element $z\in\z(\K^*)^{F^*}$ such that $\theta\hat{z}_{\L}$ is unipotent. Moreover, for every unipotent character $\lambda\in\uch(\L^F)$, we denote by $\ps_\K(\lambda)$ the set of $(\K,F)$-pseudo-unipotent characters of $\L^F$ of the form $\lambda\hat{z}_{\L}$ for some $z\in\z(\K^*)^{F^*}$. Moreover, we denote by $\ps_{\K}(\L^F)$ the set of all $(\K,F)$-pseudo unipotent characters of $\L^F$. When the group $\K$ coincides with $\L$, we denote the set of characters $\ps_\L(\L^F)$ simply by $\ps(\L^F)$.
\end{defin}

In accordance with the terminology introduced above, we say that an $e$-Harish-Chandra series of $(\K,F)$ is pseudo-unipotent if it is of the form $\E(\K^F,(\L,\nu))$ for some $\nu\in\ps_\K(\lambda)$ and where $(\L,\lambda)$ is a unipotent $e$-cuspidal pair of $(\K,F)$. In this case, we also say that $(\L,\nu)$ is a \textit{pseudo-unipotent $e$-cuspidal pair}. We define the union of all the series associated to characters in $\ps_\K(\lambda)$ by $\E(\K^F,(\L,\ps_\K(\lambda)))$. Since
\[\R_\L^\K(\lambda\hat{z}_\L)=\R_\L^\K(\lambda)\hat{z}_\K\]
for every $z\in\z(\K^*)^{F^*}$ by \cite[(8.20)]{Cab-Eng04}, we deduce that the elements of the pseudo-unipotent $e$-Harish-Chandra series $\E(\K^F,(\L,\lambda\hat{z}))$ are exactly the irreducible characters of the form $\varphi\hat{z}_{\K}$ for some unipotent character $\varphi\in\E(\K^F,(\L,\lambda))$. Moreover, we point out that $\lambda$ is the unique unipotent character in the set $\ps_\K(\lambda)$ according to \cite[Proposition 8.26]{Cab-Eng04}. Similarly, the unipotent characters in the set $\E(\K^F,(\L,\ps_\K(\lambda)))$ are those in the series $\E(\K^F,(\L,\lambda))$.

Our next lemma, shows that blocks covering pseudo-unipotent characters are regular as defined in \cite[p.210]{Nav98}.

\begin{lem}
\label{lem:Block induction and regularity}
Let $\L$ be an $F$-stable Levi subgroup of $\G$ and suppose that $\ell$ is odd and good for $\G$. For every $\L^F\leq H\leq \n_\G(\L)^F$ and every character $\vartheta\in\irr(H)$ lying above some pseudo-unipotent character in $\ps(\L^F)$, the block $\bl(\vartheta)$ is $\L^F$-regular. In particular, the Brauer induced block $\bl(\vartheta)^H$ is defined and is the unique block of $H$ covering $\bl(\vartheta)$.
\end{lem}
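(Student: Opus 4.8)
The plan is to reduce the statement to a criterion for block regularity in terms of the central character, and then exploit the fact that the characters we are covering are, up to a linear twist by a central element, unipotent. Recall that a block $b$ of $\L^F$ is $H$-regular (for $\L^F \le H \le \n_\G(\L)^F$) precisely when $b$ is stabilised by $H$ under conjugation and the relevant Brauer-induced block $b^H$ is defined and unique; more concretely, via \cite[Theorem 9.2 and the surrounding discussion]{Nav98}, it suffices to show that the central character $\lambda_b$ extends to a suitable central function controlling the induction. First I would let $\vartheta \in \irr(H)$ lie above some $\theta \in \ps(\L^F)$, so that $\theta = \lambda \hat{z}_\L$ for some unipotent $\lambda \in \uch(\L^F)$ and $z \in \z(\L^*)^{F^*}$, and set $b = \bl(\theta) = \bl(\lambda)\cdot (\hat z_\L)$ using Lemma \ref{lem:Block induction and linear characters}(i). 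Since the linear character $\hat z_\L$ is $\n_\G(\L)^F$-stable (it is the restriction to $\L^F$ of the $\G^F$-linear character $\hat z_\G$ when $z\in\z(\G^*)^{F^*}$, and this restriction behaviour is what Definition \ref{def:Pseudo unipotent} and the displayed identity $(\hat z_\G)_{\L^F} = \hat z_\L$ provide), twisting by $\hat z_\L$ commutes with the conjugation action of $\n_\G(\L)^F$ and with block induction by Lemma \ref{lem:Block induction and linear characters}(ii). Hence it is enough to prove the claim for $\theta = \lambda$ unipotent, i.e. to show $\bl(\lambda)$ is $\L^F$-regular in $H$, and then transport via the twist.

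For the unipotent case I would invoke the results of Cabanes--Enguehard recalled in the excerpt: since $\ell$ is odd and good for $\G$ (and the $^3\mathbf{D}_4$, $\ell = 3$ exception does not arise for the types under consideration, or can be excluded as in the hypotheses), $\bl(\lambda)$ is a unipotent block $b_{\L^F}(\M,\mu)$ for a unipotent $e$-cuspidal pair $(\M,\mu)$ of $(\L,F)$, and moreover $\bl(\mu)^{\L^F} = \bl(\lambda)$. The key structural input is that unipotent blocks are well-behaved under the action of $\n_\G(\L)^F$: conjugation permutes unipotent $e$-cuspidal pairs of $(\L,F)$, and an element of $\n_\G(\L)^F$ stabilising $\vartheta$ (hence the $\L^F$-orbit of constituents of $\vartheta$) stabilises the corresponding block. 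Concretely, $H$ acts on $\irr(\L^F)$ and hence on the blocks of $\L^F$; the block $b$ need not be $H$-invariant, but $H$-regularity only requires that the stabiliser $H_b$ together with the defect-group data behave compatibly — and here I would use \cite[Corollary 9.6]{Nav98} (or the characterisation that $b$ is $H$-regular iff a defect group of $b$ can be chosen inside $\L^F$ in an $H$-stable way, which is automatic since defect groups of $b$ are $\ell$-subgroups of $\L^F$ and $\L^F \unlhd H$). The cleanest route: $b$ is $\L^F$-regular in $H$ iff $b^H$ is defined, and by \cite[Theorem 4.14 / Problem 5.3]{Nav98}-type reasoning together with $\bl(\mu)^{\L^F} = b$, one gets $\bl(\mu)^{H}$ defined and equal to $b^H$ by transitivity of block induction, forcing $b$ to be $H$-regular.

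The main obstacle I anticipate is the verification that block induction $b^H$ is actually \emph{defined} — this is where one genuinely uses more than formalities. Brauer induction $b^H$ exists whenever $b$ is $H$-regular, so this is nearly circular; the substance is to produce the regularity directly. I would do this by exhibiting a defect group: a defect group $D$ of $b = b_{\L^F}(\M,\mu)$ can be described via Cabanes--Enguehard (it is governed by $\c_\L(\z^\circ(\M)_{\Phi_e})$-type data), and in particular $D \le \L^F$; since $\L^F \unlhd H$, the block $b$ and its defect group interact with $H$ in the standard way, and \cite[Theorem 9.4]{Nav98} then yields both that $b^H$ is defined and that it is the unique block of $H$ covering $b$ — which is exactly the assertion. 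The twist by $\hat z_\L$ at the end is routine given Lemma \ref{lem:Block induction and linear characters}, since it preserves defect groups (linear characters have $\ell$-defect equal to the full $\ell$-part, so $d(\psi\zeta) = d(\psi)$ is not needed here; rather, $\bl(\psi)$ and $\bl(\psi\zeta)$ share a defect group because multiplication by a linear character is an isometry fixing the Brauer pairs) and commutes with induction to $H$.
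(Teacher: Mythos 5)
Your proposal has a genuine gap at the crucial step. You work with the block $b=\bl(\theta)$ of $\L^F$ and argue that its $H$-regularity follows because ``a defect group of $b$ can be chosen inside $\L^F$ \dots which is automatic since defect groups of $b$ are $\ell$-subgroups of $\L^F$ and $\L^F\unlhd H$.'' But every block of $\L^F$ has its defect groups inside $\L^F$; if this were the criterion, every block of a normal subgroup would be regular in the overgroup, which is false. The condition one actually needs is $\c_H(D)\leq\L^F$ for a defect group $D$, and it is not automatic. Your argument also never quite lands on the object the lemma is about: the statement concerns $\bl(\vartheta)$, a block of $H$, not $b$. What the paper does is build a chain of defect groups $D_\mu\leq D_\varphi\leq D_\vartheta$ for $\bl(\mu)$, $\bl(\varphi)$ and $\bl(\vartheta)$ via \cite[Lemma 4.13, Theorem 9.26]{Nav98}, and then use the decisive geometric input from \cite[Proposition 3.3 (ii)]{Cab-Eng94}, namely $\M^F=\c_{\G^F}(Q)$ for $Q:=\z(\M)^F_\ell$, together with $Q\leq\O_\ell(\M^F)\leq D_\mu\leq D_\vartheta$, to obtain $\c_H(D_\vartheta)\leq\c_H(Q)=\M^F\leq\L^F$ and conclude by \cite[Lemma 9.20]{Nav98}. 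That identification of a $\Phi_e$-controlled $\ell$-subgroup inside every relevant defect group is precisely the step you are missing; the route you sketch (producing $b^H$ by transitivity and appealing to uniqueness of the covering block) is circular, since the uniqueness \emph{is} the regularity statement.

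A secondary misstep: you assert that $\hat z_\L$ is $\n_\G(\L)^F$-stable because it restricts from a linear character of $\G^F$. But Definition \ref{def:Pseudo unipotent} defines $\ps(\L^F)=\ps_\L(\L^F)$ with $z\in\z(\L^*)^{F^*}$, not $z\in\z(\G^*)^{F^*}$, so $\hat z_\L$ need not extend to $\G^F$ and the stability claim fails. The paper needs much less: it invokes \cite[Lemma 2.1]{Riz18} only to conclude that $\bl(\varphi)$ and $\bl(\varphi\hat z_\L)$ share a defect group, which is all the chain argument requires.
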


\begin{proof}
Let $\varphi\in\uch(\L^F)$ and $z\in\z(\L^*)^{F^*}$ such that $\varphi\hat{z}_{\L}$ lies below the character $\vartheta$ and chose a unipotent $e$-cuspidal pair $(\M,\mu)$ of $\L$ such that $\varphi\in\E(\L^F,(\M,\mu))$. In particular, $\bl(\varphi)=b_{\L^F}(\M,\mu)$ according to \cite{Cab-Eng94}. If $Q:=\z(\M)_\ell^F$, then $\M^F=\c_{\G^F}(Q)$ according to \cite[Proposition 3.3 (ii)]{Cab-Eng94}. Moreover, observe that \cite[Proposition 4.2]{Cab-Eng94} implies that $\bl(\varphi)=b_{\L^F}(\M,\mu)=\bl(\mu)^{\L^F}$ while \cite[Lemma 2.1]{Riz18} implies that $\bl(\varphi)$ and $\bl(\varphi\hat{z}_\L)$ have the same defect groups. Now, applying \cite[Lemma 4.13 and Theorem 9.26]{Nav98}, we can find defect groups $D_\vartheta$, $D_\varphi$ and $D_\mu$ of $\bl(\vartheta)$, $\bl(\varphi)$ and $\bl(\mu)$ respectively with the property that $D_\mu\leq D_\varphi\leq D_\vartheta$. Since $Q\leq\O_\ell(\M^F)\leq D_\mu$ by \cite[Theorem 4.8]{Nav98}, we deduce that $Q\leq D_\vartheta$ and hence $\c_H(D_\vartheta)\leq \c_H(Q)=\M^F\leq \L^F$. By \cite[Lemma 9.20]{Nav98} we conclude that the block $\bl(\vartheta)$ is $\L^F$-regular. The second part of the lemma now follows from \cite[Theorem 9.19]{Nav98}.
\end{proof}

\section{Compatibility with isomorphisms of character triples}
\label{sec:Compatibility with isomorphisms of character triples}

The aim of this section is to show how the bijection \eqref{eq:e-HC series and relative Weyl group} can be made compatible with isomorphisms of character triples and with the action of automorphisms. This property was first suggested by the author in \cite[Parametrisation B]{Ros-Generalized_HC_theory_for_Dade} and further studied in \cite{Ros-Clifford_automorphisms_HC}. Our Theorem \ref{thm:Main Parametrisation for simple groups} gives a solution of this conjectured result for unipotent $e$-Harish-Chandra series and groups of type $\bf{A}$, $\bf{B}$ and $\bf{C}$. Before proceeding further, we show how the parametrisation \eqref{eq:e-HC series and relative Weyl group} can be reformulated in a more convenient form. For this, let $(\L,\lambda)$ be a unipotent $e$-cuspidal pair of $(\G,F)$ and assume that $\wh{\lambda}$ is an extension of $\lambda$ to the stabiliser $\n_\G(\L)^F_\lambda$. Then, by applying Gallagher's theorem \cite[Corollary 6.17]{Isa76} and the Clifford correspondence \cite[Theorem 6.11]{Isa76} we obtain a bijection
\begin{align*}
\irr\left(W_\G(\L,\lambda)^F\right)&\to\irr\left(\n_\G(\L)^F\enspace\middle|\enspace\lambda\right)
\\
\eta&\mapsto \left(\wh{\lambda}\eta\right)^{\n_\G(\L)^F}
\end{align*}
and therefore \eqref{eq:e-HC series and relative Weyl group} holds if an only if there exists a bijection
\begin{equation}
\label{eq:e-HC series, parametrization with normalizer}
\E(\G^F,(\L,\lambda))\to\irr\left(\n_\G(\L)^F\enspace\middle|\enspace\lambda\right).
\end{equation}
This new reformulation will allow us to introduce the aforementioned compatibility with isomorphisms of character triple isomorphisms.

\subsection{Equivariance and maximal extendibility}
\label{sec:equivarance and maximal ext}

In this section, we consider some equivariance properties for the parametrisation \eqref{eq:e-HC series, parametrization with normalizer} which are related to maximal extendibility (see \eqref{eq:Maximal extendibility}) of unipotent characters.

As in the previous sections, consider a connected reductive group $\G$ with a Frobenius endomorphism $F:\G\to \G$ defining an $\mathbb{F}_q$-structure on $\G$. We denote by $\aut_\mathbb{F}(\G^F)$ the set of those automorphisms of $\G^F$ obtained by restricting some bijective morphism of algebraic groups $\sigma:\G\to\G$ that commutes with $F$ to the set of $\mathbb{F}_q$-rational points $\G^F$. Notice that the restriction of such a morphism $\sigma$ to $\G^F$, which by abuse of notation we denote again by $\sigma$, is an automorphism of the finite group $\G^F$. We refer the reader to \cite[Section 2.4]{Cab-Spa13} for further details. In particular, observe that any morphism $\sigma$ with the properties above is determined by its restriction to $\G^F$ up to a power of $F$ and hence it follows that $\aut_\mathbb{F}(\G^F)$ acts on the set of $F$-stable closed connected subgroups of $\G$. Then, given an $F$-stable closed connected subgroup $\H$ of $\G$, we can define the set $\aut_\mathbb{F}(\G^F)_\H$ consisting of those automorphisms $\sigma$ as above that stabilise the algebraic group $\H$.

Now, let $\ell$ be a prime number not dividing $q$ and denote by $e$ the order of $q$ modulo $\ell$ or $q$ modulo $4$ if $\ell=2$. In order to control the action of automorphism on unipotent $e$-Harish-Chandra series, we exploit a result of Cabanes and Sp\"ath. More precisely, in \cite[Theorem 3.4]{Cab-Spa13} it was shown that the parametrisation given by Brou\'e, Malle and Michel in \cite[Theorem 3.2 (2)]{Bro-Mal-Mic93} commutes with the action of those automorphisms in the set $\aut_\mathbb{F}(\G^F)$. Notice that the statement of \cite[Theorem 3.4]{Cab-Spa13} only considers unipotent $e$-cuspidal pairs $(\L,\lambda)$ where $\L$ is a minimal $e$-split Levi subgroups (which is enough for the purpose of dealing with the McKay Conjecture). However, their proof works for the general case as well.

\begin{prop}
\label{prop:Equivariant BMM}
For every unipotent $e$-cuspidal pair $(\L,\lambda)$ of $(\G,F)$ there exists an $\aut_\mathbb{F}(\G^F)_{(\L,\lambda)}$-equivariant bijection
\[I_{(\L,\lambda)}^\G:\irr\left(W_\G(\L,\lambda)^F\right)\to\E\left(\G^F,(\L,\lambda)\right)\]
such that
\[I^\G_{(\L,\lambda)}(\eta)(1)_\ell=\left|\hspace{1pt}\G^F:\n_\G(\L,\lambda)^F\hspace{1pt}\right|_\ell\cdot\lambda(1)_\ell\cdot\eta(1)_\ell\]
for every $\eta\in\irr(W_\G(\L,\lambda)^F)$.
\end{prop}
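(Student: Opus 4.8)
The plan is to deduce this from the parametrisation of Broué--Malle--Michel \cite[Theorem 3.2 (2)]{Bro-Mal-Mic93}, together with the equivariance statement of Cabanes--Sp\"ath \cite[Theorem 3.4]{Cab-Spa13}, adapted from the case of minimal $e$-split Levi subgroups to the general case as noted in the paragraph preceding the proposition. The bijection $I_{(\L,\lambda)}^\G$ is just the bijection \eqref{eq:e-HC series and relative Weyl group}; the two things to check are that it can be chosen $\aut_\mathbb{F}(\G^F)_{(\L,\lambda)}$-equivariant and that it satisfies the stated degree formula. First I would record that the $e$-Harish-Chandra series $\E(\G^F,(\L,\lambda))$ and the set $\irr(W_\G(\L,\lambda)^F)$ both carry natural actions of $\aut_\mathbb{F}(\G^F)_{(\L,\lambda)}$: an automorphism $\sigma$ stabilising the pair $(\L,\lambda)$ permutes the constituents of $\R_\L^\G(\lambda)$ (since $\R_\L^\G$ commutes with such $\sigma$ up to the independence-of-parabolic results already cited), and it induces an automorphism of the relative Weyl group $W_\G(\L,\lambda)^F = \n_\G(\L)^F_\lambda/\L^F$, hence a permutation of its irreducible characters. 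The content of \cite[Theorem 3.4]{Cab-Spa13} is precisely that the Broué--Malle--Michel bijection intertwines these two actions; I would cite this directly, remarking (as the excerpt already does) that although their statement is phrased for $\L$ minimal, the argument applies verbatim to arbitrary unipotent $e$-cuspidal pairs.

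Next I would establish the degree formula. Using the reformulation via the extension $\wh{\lambda}$ of $\lambda$ to $\n_\G(\L)^F_\lambda$, Gallagher's theorem \cite[Corollary 6.17]{Isa76} and the Clifford correspondence \cite[Theorem 6.11]{Isa76}, the character of $\n_\G(\L)^F$ attached to $\eta \in \irr(W_\G(\L,\lambda)^F)$ is $(\wh{\lambda}\eta)^{\n_\G(\L)^F}$, which has degree
\[
\left|\,\n_\G(\L)^F : \n_\G(\L,\lambda)^F\,\right|\cdot\lambda(1)\cdot\eta(1).
\]
Here I am identifying $\eta$ with its inflation to $\n_\G(\L)^F_\lambda = \n_\G(\L,\lambda)^F$, so $\wh{\lambda}\eta$ has degree $\lambda(1)\eta(1)$, and induction to $\n_\G(\L)^F$ multiplies by the index. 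The degree of the corresponding member of $\E(\G^F,(\L,\lambda))$ is then obtained from the formula of Broué--Malle--Michel for the degrees of unipotent characters in an $e$-Harish-Chandra series (equivalently, from the fact that, up to the $\ell'$-part, $\R_\L^\G$ multiplies degrees by $\pm|\G^F:\L^F|_{\ell'}$ appropriately). To extract the $\ell$-part I would use that $\ell \nmid |W_\G(\L,\lambda)^F|$ in the relevant range — or rather, more robustly, that the $\ell$-part of $|\n_\G(\L)^F:\n_\G(\L,\lambda)^F|$ is trivial since this index divides $|W_\G(\L,\lambda)^F|$ — so that the $\ell$-part of the degree on the normaliser side is $|\n_\G(\L)^F:\n_\G(\L,\lambda)^F|_\ell\cdot\lambda(1)_\ell\cdot\eta(1)_\ell$, and then combine with $|\G^F:\n_\G(\L)^F|_\ell$ via the index formula $|\G^F:\n_\G(\L,\lambda)^F| = |\G^F:\n_\G(\L)^F|\cdot|\n_\G(\L)^F:\n_\G(\L,\lambda)^F|$ and the standard fact that $\R_\L^\G$ preserves the relation between $\ell$-defect and the generic degree. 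Putting these together yields exactly
\[
I^\G_{(\L,\lambda)}(\eta)(1)_\ell = \left|\,\G^F:\n_\G(\L,\lambda)^F\,\right|_\ell\cdot\lambda(1)_\ell\cdot\eta(1)_\ell.
\]

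The main obstacle I anticipate is the degree bookkeeping rather than the equivariance, which is essentially a citation. Specifically, one must be careful that the Broué--Malle--Michel parametrisation can be chosen compatibly with the degree formula \emph{and} with the $\aut_\mathbb{F}(\G^F)_{(\L,\lambda)}$-action simultaneously; since the degrees are preserved by the automorphism action, there is no conflict, but I would want to phrase the argument so that the equivariant bijection of \cite[Theorem 3.4]{Cab-Spa13} is seen a posteriori to satisfy the degree identity (the degree of each character being an invariant of its $e$-Harish-Chandra-theoretic label). A second, more technical point is justifying that $\R_\L^\G$ is independent of the parabolic and commutes suitably with the automorphisms in $\aut_\mathbb{F}(\G^F)$ in the generality needed here; for types $\bf A$, $\bf B$, $\bf C$ this is covered by \cite{Bon-Mic10} and \cite{Tay18} as already remarked, so I would simply invoke those. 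I would close by noting that all of this is exactly the reformulation \eqref{eq:e-HC series and relative Weyl group} with the extra equivariance and degree data made explicit, which is what the later sections need.
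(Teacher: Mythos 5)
The equivariance part of your argument is correct and matches the paper exactly: both you and the paper cite \cite[Theorem 3.4]{Cab-Spa13} for the equivariant form of the Brou\'e--Malle--Michel bijection, with the observation that the minimality hypothesis on $\L$ in that theorem is not needed. The paper's proof is in fact nothing more than this citation (together with a pointer to \cite[Theorem 3.4]{Ros-Clifford_automorphisms_HC}), because the degree formula is also part of the statement in those references.

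Your attempt to rederive the degree formula, however, contains a genuine gap. You propose to ``extract the $\ell$-part'' by asserting that $\ell \nmid |W_\G(\L,\lambda)^F|$, or, ``more robustly'', that the $\ell$-part of $|\n_\G(\L)^F:\n_\G(\L,\lambda)^F|$ is trivial because ``this index divides $|W_\G(\L,\lambda)^F|$''. Both claims are false. First, $|\n_\G(\L)^F:\n_\G(\L,\lambda)^F| = |W_\G(\L)^F : W_\G(\L,\lambda)^F|$, which does \emph{not} divide $|W_\G(\L,\lambda)^F|$. Second, $\ell$ certainly can divide $|W_\G(\L,\lambda)^F|$: for example, for $\G$ of type ${\bf A}_{n-1}$ with $e=1$ and $\L$ a maximally split torus, $W_\G(\L,1_\L)^F$ is the full Weyl group $S_n$, whose order is usually divisible by $\ell$. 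Moreover, the Clifford-theoretic degree computation for $(\wh{\lambda}\eta)^{\n_\G(\L)^F}$ that you carry out is not what the proposition asks for --- it concerns the normaliser side and only becomes relevant in Corollary \ref{cor:Equivariant BMM, extended}. What is needed here is the $\ell$-part of the degree of the \emph{global} character $I_{(\L,\lambda)}^\G(\eta)\in\E(\G^F,(\L,\lambda))$, and the step from the normaliser-side degree to the global one is precisely what you hand-wave with ``the standard fact that $\R_\L^\G$ preserves the relation between $\ell$-defect and the generic degree''. That step is the content of the Brou\'e--Malle--Michel degree formula together with a nontrivial $\ell$-adic analysis of generic degrees (carried out in \cite{Cab-Spa13} and \cite{Ros-Clifford_automorphisms_HC}); it cannot be obtained by the divisibility argument you sketch. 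The correct move is simply to cite the degree formula alongside the equivariance, as the paper does, rather than attempting to reprove it.
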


\begin{proof}
This follows from the proof of \cite[Theorem 3.4]{Cab-Spa13}. See also \cite[Theorem 3.4]{Ros-Clifford_automorphisms_HC}.
\end{proof}

As explained at the beginning of this section, if the character $\lambda$ extends to the stabiliser $\n_\G(\L)^F_\lambda$, then we can use the bijection \eqref{eq:e-HC series and relative Weyl group} to obtain \eqref{eq:e-HC series, parametrization with normalizer}. A similar argument can be used to include the equivariance property described above and obtain an equivariant version of \eqref{eq:e-HC series, parametrization with normalizer}. Observe that, by the discussion on automorphisms above, it follows that the group $\aut_\mathbb{F}(\G^F)$ acts on the set of $e$-cuspidal pairs $(\L,\lambda)$ and therefore we can define the stabiliser $\aut_\mathbb{F}(\G^F)_{(\L,\lambda)}$. Furthermore, recall that we denote by $d(\chi)$ the $\ell$-defect of an irreducible character $\chi$.

\begin{cor}
\label{cor:Equivariant BMM, extended}
Let $(\L,\lambda)$ be a unipotent $e$-cuspidal pair of $(\G,F)$ and suppose that $\lambda$ has an extension $\lambda^\diamond\in\irr(\n_\G(\L)_\lambda^F)$ which is additionally $\aut_{\mathbb{F}}(\G^F)_{(\L,\lambda)}$-invariant. Then there exists an $\aut_{\mathbb{F}}(\G^F)_{(\L,\lambda)}$-equivariant bijection
\[\Omega_{(\L,\lambda)}^\G:\E\left(\G^F,(\L,\lambda)\right)\to\irr\left(\n_\G(\L)^F\enspace\middle|\enspace\lambda\right)\]
such that
\[d(\chi)=d\left(\Omega^\G_{(\L,\lambda)}(\chi)\right)\]
for every $\chi\in\E(\G^F,(\L,\lambda))$.
\end{cor}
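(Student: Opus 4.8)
The plan is to combine the equivariant parametrisation of Proposition~\ref{prop:Equivariant BMM} with Gallagher's theorem and the Clifford correspondence, exactly as in the informal discussion preceding Section~\ref{sec:equivarance and maximal ext}, but now keeping track of the action of $\aut_{\mathbb{F}}(\G^F)_{(\L,\lambda)}$ throughout. First I would use the hypothesis that $\lambda$ admits an $\aut_{\mathbb{F}}(\G^F)_{(\L,\lambda)}$-invariant extension $\lambda^\diamond \in \irr(\n_\G(\L)^F_\lambda)$ to define, via Gallagher's theorem \cite[Corollary 6.17]{Isa76}, a bijection $\irr(W_\G(\L,\lambda)^F) \to \irr(\n_\G(\L)^F_\lambda \mid \lambda)$, $\eta \mapsto \lambda^\diamond \cdot \wt{\eta}$, where $\wt{\eta}$ denotes the inflation of $\eta$ to $\n_\G(\L)^F_\lambda$ along the quotient map $\n_\G(\L)^F_\lambda \to W_\G(\L,\lambda)^F = \n_\G(\L)^F_\lambda/\L^F$. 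Composing with the Clifford correspondence \cite[Theorem 6.11]{Isa76} for the inclusion $\n_\G(\L)^F_\lambda \unlhd \n_\G(\L)^F$ gives a bijection $\irr(W_\G(\L,\lambda)^F) \to \irr(\n_\G(\L)^F \mid \lambda)$. Pre-composing with the inverse of the bijection $I^\G_{(\L,\lambda)}$ from Proposition~\ref{prop:Equivariant BMM} produces the desired map $\Omega^\G_{(\L,\lambda)}$.

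Next I would verify the equivariance. The bijection $I^\G_{(\L,\lambda)}$ is already $\aut_{\mathbb{F}}(\G^F)_{(\L,\lambda)}$-equivariant by Proposition~\ref{prop:Equivariant BMM}. For the Gallagher step, the key point is that for $\sigma \in \aut_{\mathbb{F}}(\G^F)_{(\L,\lambda)}$ one has $(\lambda^\diamond)^\sigma = \lambda^\diamond$ by the assumed $\sigma$-invariance, and the inflation map $\eta \mapsto \wt{\eta}$ intertwines the action of $\aut_{\mathbb{F}}(\G^F)_{(\L,\lambda)}$ on $\irr(W_\G(\L,\lambda)^F)$ with its action on $\irr(\n_\G(\L)^F_\lambda \mid \lambda)$, since $\sigma$ normalises both $\n_\G(\L)^F_\lambda$ and $\L^F$ and hence acts compatibly on the quotient. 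Thus $(\lambda^\diamond \wt{\eta})^\sigma = \lambda^\diamond \wt{\eta^\sigma}$. The Clifford correspondence $\irr(\n_\G(\L)^F_\lambda \mid \lambda) \to \irr(\n_\G(\L)^F \mid \lambda)$ is induction, which is manifestly equivariant under any automorphism of $\n_\G(\L)^F$ stabilising $\n_\G(\L)^F_\lambda$; and every $\sigma \in \aut_{\mathbb{F}}(\G^F)_{(\L,\lambda)}$ does stabilise $\n_\G(\L)^F$ (it stabilises $\L$) and $\n_\G(\L)^F_\lambda$ (it fixes $\lambda$). Chaining these three equivariances gives the equivariance of $\Omega^\G_{(\L,\lambda)}$.

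Finally I would check the defect-preservation statement $d(\chi) = d(\Omega^\G_{(\L,\lambda)}(\chi))$. This is a degree computation. On one side, Proposition~\ref{prop:Equivariant BMM} gives $I^\G_{(\L,\lambda)}(\eta)(1)_\ell = |\G^F : \n_\G(\L,\lambda)^F|_\ell \cdot \lambda(1)_\ell \cdot \eta(1)_\ell$, so $d(I^\G_{(\L,\lambda)}(\eta))$ is determined by $v_\ell(|\G^F|) - v_\ell(|\G^F : \n_\G(\L,\lambda)^F|) - v_\ell(\lambda(1)) - v_\ell(\eta(1))$, which equals $v_\ell(|\n_\G(\L,\lambda)^F|) - v_\ell(\lambda(1)) - v_\ell(\eta(1))$. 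On the other side, $\Omega^\G_{(\L,\lambda)}(\chi)$ is induced from $\lambda^\diamond \wt{\eta}$, so its degree is $|\n_\G(\L)^F : \n_\G(\L)^F_\lambda| \cdot \lambda(1) \cdot \eta(1)$; writing $\n_\G(\L,\lambda)^F = \n_\G(\L)^F_\lambda$ and using that $|\n_\G(\L)^F : \n_\G(\L)^F_\lambda|$ divides $|W_\G(\L)^F|$ which is prime to $\ell$ (as $\ell \nmid |W_\G(\L)^F|$ is automatic once $\ell$ is, say, larger than the relevant bound — or more simply because $\n_\G(\L)^F_\lambda$ has index dividing the order of a section of the Weyl group; this needs to be argued, see below), one gets $v_\ell(\Omega^\G_{(\L,\lambda)}(\chi)(1)) = v_\ell(\lambda(1)) + v_\ell(\eta(1))$, and then $d(\Omega^\G_{(\L,\lambda)}(\chi)) = v_\ell(|\n_\G(\L)^F_\lambda|) - v_\ell(\lambda(1)) - v_\ell(\eta(1))$, which matches.

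The main obstacle I anticipate is precisely this last $\ell$-part bookkeeping: one must be careful that $|\n_\G(\L)^F : \n_\G(\L)^F_\lambda|$ and the relevant index $|\G^F : \n_\G(\L,\lambda)^F|_\ell$ fit together correctly, and in particular that passing from defects in $\G^F$ to defects in $\n_\G(\L)^F$ introduces no discrepancy — equivalently that $|\G^F|_\ell$ and $|\n_\G(\L)^F|_\ell$ differ by exactly $|\G^F : \n_\G(\L)^F|_\ell$, which is a tautology, combined with $|\n_\G(\L)^F:\n_\G(\L)^F_\lambda|$ being prime to $\ell$. The cleanest route is to invoke that $\n_\G(\L)^F/\L^F$ embeds in $W_\G(\L)$, an $\ell'$-group under the standing hypotheses (or to cite the relevant place in \cite{Cab-Spa13} or \cite{Ros-Clifford_automorphisms_HC} where this exact defect statement is recorded). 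Everything else is a formal concatenation of Gallagher, Clifford correspondence, and Proposition~\ref{prop:Equivariant BMM}, with no genuine new content.
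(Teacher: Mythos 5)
Your construction of $\Omega^\G_{(\L,\lambda)}$ and the equivariance verification follow exactly the paper's route: precompose the Gallagher--Clifford bijection $\eta \mapsto (\lambda^\diamond\eta)^{\n_\G(\L)^F}$ with $(I^\G_{(\L,\lambda)})^{-1}$, and use the $\aut_{\mathbb{F}}(\G^F)_{(\L,\lambda)}$-invariance of $\lambda^\diamond$ together with the equivariance of inflation, of induction, and of $I^\G_{(\L,\lambda)}$. All of that is correct and matches the paper.

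Where you go astray is in the defect computation, and specifically in the worry you flag at the end. You claim the argument needs $|\n_\G(\L)^F:\n_\G(\L)^F_\lambda|$ to be prime to $\ell$ and propose to justify this via $\ell\nmid|W_\G(\L)^F|$. This is both unnecessary and, as a general fact, false: for $e$-split Levi subgroups the relative Weyl group $W_\G(\L)^F$ can certainly have order divisible by $\ell$ (for instance $\G=\GL_n$, $e=1$, $\L$ the split maximal torus, $\lambda$ trivial, $W_\G(\L)^F\simeq S_n$ with $\ell\le n$ and $\ell\mid q-1$). Had your proof really depended on this, it would be a genuine gap. Fortunately it does not: no coprimality is needed. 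Writing $\psi:=(\lambda^\diamond\eta)^{\n_\G(\L)^F}$, the induction formula gives $\psi(1)_\ell=|\n_\G(\L)^F:\n_\G(\L,\lambda)^F|_\ell\cdot\lambda(1)_\ell\cdot\eta(1)_\ell$, and since the $\ell$-part of a product of indices of nested subgroups is multiplicative,
\[
\ell^{d(\psi)}=\frac{|\n_\G(\L)^F|_\ell}{\psi(1)_\ell}=\frac{|\n_\G(\L,\lambda)^F|_\ell}{\lambda(1)_\ell\cdot\eta(1)_\ell},
\]
which is exactly what the degree formula in Proposition~\ref{prop:Equivariant BMM} yields for $\ell^{d(\chi)}$ after cancelling $|\G^F|_\ell$ against $|\G^F:\n_\G(\L,\lambda)^F|_\ell$. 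The two expressions cancel identically, with no $\ell'$-hypothesis on any index. The ``main obstacle'' you anticipate is therefore a non-obstacle; the degree formula in Proposition~\ref{prop:Equivariant BMM} is stated precisely so this bookkeeping closes on its own.
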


\begin{proof}
Consider the bijection $I^{\G}_{(\L,\lambda)}$ given by Proposition \ref{prop:Equivariant BMM} and define the map
\begin{align*}
\Omega_{(\L,\lambda)}^\G:\E\left(\G^F,(\L,\lambda)\right)&\to\irr\left(\n_\G(\L)^F\enspace\middle|\enspace \lambda\right)
\\
I_{(\L,\lambda)}^\G(\eta)&\mapsto\left(\lambda^\diamond\eta\right)^{\n_\G(\L)^F}
\end{align*}
for every $\eta\in\irr(W_\G(\L,\lambda)^F)$ and where $\lambda^\diamond$ is the extension of $\lambda$ to $\n_\G(\L)_\lambda^F$ given in the statement. This is a well defined bijection by the Clifford correspondence \cite[Theorem 6.11]{Isa76} and Gallagher's theorem \cite[Corollary 6.17]{Isa76}. Moreover, for every $\alpha\in\aut_\mathbb{F}(\G^F)$ such that $(\L,\lambda)^\alpha=(\L,\lambda)$ and every $\eta\in\irr(W_\G(\L,\lambda)^F)$ we have
\begin{align*}
\left(\left(\lambda^\diamond\eta\right)^{\n_\G(\L)^F}\right)^\alpha&=\left(\left(\lambda^\diamond\eta\right)^\alpha\right)^{\n_\G(\L)^F}
\\
&=\left(\lambda^\diamond\eta^\alpha\right)^{\n_\G(\L)^F}
\end{align*}
because $\alpha$ stabilises $\lambda^\diamond$. On the other hand
\[I_{(\L,\lambda)}^\G(\eta)^\alpha=I_{(\L,\lambda)}^\G\left(\eta^\alpha\right)\]
by the properties of $I_{(\L,\lambda)}^\G$ and hence we conclude that $\Omega_{(\L,\lambda)}^\G$ is $\aut_{\mathbb{F}}(\G^F)_{(\L,\lambda)}$-equivariant. Furthermore, if we consider $\eta\in\irr(W_\G(\L,\lambda)^F)$ and define the characters $\chi:=I_{(\L,\lambda)}^\G(\eta)$ and $\psi:=(\lambda^\diamond\eta)^{\n_\G(\L)^F}$, then the degree formula from Proposition \ref{prop:Equivariant BMM} implies that
\begin{equation*}
\label{eq:Equivariant BMM, extended, 1}
\ell^{d(\chi)}=\dfrac{\left|\hspace{1pt}\G^F\hspace{1pt}\right|_\ell}{\chi(1)_\ell}=\dfrac{\left|\hspace{1pt}\n_\G(\L,\lambda)^F\hspace{1pt}\right|_\ell}{\lambda(1)_\ell\cdot\eta(1)_\ell}=\dfrac{\left|\hspace{1pt}\n_\G(\L)^F\hspace{1pt}\right|_\ell}{\psi(1)_\ell}=\ell^{d(\psi)}
\end{equation*}
and hence we deduce that $d(\chi)=d(\psi)$ as required.
\end{proof}

Next, we consider a \textit{regular embedding} $\G\leq \wt{\G}$ as defined in \cite[(15.1)]{Cab-Eng04}. Then, $\wt{\G}$ is a connected reductive group with connected centre and whose derived subgroup coincides with that of $\G$, that is, $[\wt{\G},\wt{\G}]=[\G,\G]$. In particular, observe that $\wt{\G}=\z(\wt{\G})\G$, that $\G$ is normal in  $\wt{\G}$ and that the quotient $\wt{\G}/\G$ is an abelian group. Moreover, for every Levi subgroup $\L$ of $\G$, we deduce that $\wt{\L}:=\z(\wt{\G})\L$ is a Levi subgroup of $\wt{\G}$ and that $\L\leq \wt{\L}$ is again a regular embedding. These observations will be used throughout this paper without further reference.

We also recall that, according to \cite[Proposition 13.20]{Dig-Mic91}, restriction of characters yields a bijection between the unipotent characters of $\wt{\G}^F$ and those of $\G^F$. In particular, every unipotent character of $\G^F$ is $\wt{\G}^F$-invariant. Using this observation, we can compare the relative Weyl groups in $\wt{\G}^F$ with those in $\G^F$.

\begin{lem}
\label{lem:Relative weyl groups and regular embeddings}
Let $(\L,\lambda)$be a unipotent $e$-cuspidal pair of $(\G,F)$, set $\wt{\L}=\L\z(\wt{\G})$ and consider a unipotent extension $\wt{\lambda}$ of $\lambda$ to $\wt{\L}^F$. Then, $\n_{\wt{\G}}(\L)^F_\lambda=\n_{\wt{\G}}(\L)^F_{\wt{\lambda}}$ and we have $W_{\wt{\G}}(\wt{\L},\wt{\lambda})^F\simeq W_\G(\L,\lambda)^F$.
\end{lem}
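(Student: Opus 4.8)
The plan is to establish the two claims in sequence, deriving the isomorphism of relative Weyl groups from the equality of stabilisers together with standard properties of regular embeddings. First I would record the basic structural facts: since $\wt{\G} = \z(\wt{\G})\G$ and $\wt{\L} = \z(\wt{\G})\L$, the quotients $\wt{\G}^F/\G^F$ and $\wt{\L}^F/\L^F$ are abelian, and every element of $\n_{\wt{\G}}(\L)$ normalises $\wt{\L} = \z(\wt{\G})\L$ (as $\z(\wt{\G})$ is central), so $\n_{\wt{\G}}(\L) = \n_{\wt{\G}}(\wt{\L})$; in particular $\n_{\wt{\G}}(\L)^F$ acts on $\irr(\wt{\L}^F)$ and on $\irr(\L^F)$ compatibly with restriction.

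For the equality $\n_{\wt{\G}}(\L)^F_\lambda = \n_{\wt{\G}}(\L)^F_{\wt{\lambda}}$: the inclusion ``$\supseteq$'' is immediate, since any element fixing $\wt{\lambda}$ fixes its restriction $\lambda$. For ``$\subseteq$'', take $g \in \n_{\wt{\G}}(\L)^F$ with $\lambda^g = \lambda$. Then $\wt{\lambda}^g$ is again an extension of $\lambda$ to $\wt{\L}^F$, and it is again a unipotent character of $\wt{\L}^F$ (the set of unipotent characters is invariant under the automorphism induced by $g$). By \cite[Proposition 13.20]{Dig-Mic91}, restriction from $\wt{\L}^F$ to $\L^F$ is a bijection on unipotent characters, so the unipotent extension of $\lambda$ to $\wt{\L}^F$ is unique; hence $\wt{\lambda}^g = \wt{\lambda}$, giving $g \in \n_{\wt{\G}}(\L)^F_{\wt{\lambda}}$. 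This is the step where one must be slightly careful: it is crucial that $\wt{\lambda}$ is the \emph{unipotent} extension rather than an arbitrary one, since in general $\lambda$ has many extensions to $\wt{\L}^F$ differing by linear characters of $\wt{\L}^F/\L^F$, and only uniqueness of the unipotent one forces $g$-invariance.

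For the isomorphism $W_{\wt{\G}}(\wt{\L},\wt{\lambda})^F \simeq W_\G(\L,\lambda)^F$: by definition $W_{\wt{\G}}(\wt{\L},\wt{\lambda})^F = \n_{\wt{\G}}(\wt{\L})^F_{\wt{\lambda}}/\wt{\L}^F = \n_{\wt{\G}}(\L)^F_\lambda/\wt{\L}^F$ using $\n_{\wt{\G}}(\wt{\L}) = \n_{\wt{\G}}(\L)$ and the stabiliser equality just proved, while $W_\G(\L,\lambda)^F = \n_\G(\L)^F_\lambda/\L^F$. The natural map comes from the inclusion $\n_\G(\L)^F_\lambda \hookrightarrow \n_{\wt{\G}}(\L)^F_\lambda$ followed by the quotient by $\wt{\L}^F$; its kernel is $\n_\G(\L)^F_\lambda \cap \wt{\L}^F = \L^F$ (since $\wt{\L}^F \cap \G = \L$ as $\wt{\L} \cap \G = \L$, using that $\wt{\L}/\L \hookrightarrow \wt{\G}/\G$ and $\G \cap \wt{\L} = \L$), so the map is injective. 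For surjectivity, I would use that $\n_{\wt{\G}}(\L)^F_\lambda = \wt{\L}^F \cdot \n_\G(\L)^F_\lambda$: given $g \in \n_{\wt{\G}}(\L)^F_\lambda$, the coset $g\G^F \in \wt{\G}^F/\G^F$ — combined with a Lang-type argument for the connected group $\z^\circ(\wt{\G})$ or the standard fact $\wt{\G}^F = \z(\wt{\G})^F \G^F$ is generally \emph{false}, so instead I would argue at the level of the quotient $\n_{\wt{\G}}(\L)^F/\L^F$ directly — but the cleanest route is: $\lambda$ being $\wt{\L}^F$-invariant (as a unipotent character), $\n_{\wt{\G}}(\L)^F_\lambda \supseteq \wt{\L}^F$, and the image of $\n_{\wt{\G}}(\L)^F_\lambda$ in $\wt{\G}^F/\wt{\L}^F \cong \n_{\wt{\G}}(\L)^F/\n_\G(\L)^F$-type computations show the two groups have the same image in the common overgroup. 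Concretely, I would show $W_{\wt{\G}}(\wt{\L},\wt{\lambda})^F$ embeds into $W_{\wt{\G}}(\wt{\L})^F := \n_{\wt{\G}}(\wt{\L})^F/\wt{\L}^F \cong W_\G(\L)^F := \n_\G(\L)^F/\L^F$ (the last isomorphism because $\n_{\wt{\G}}(\L) = \z(\wt{\G})\n_\G(\L)$, which follows from $\wt{\G} = \z(\wt{\G})\G$), and that under this identification $W_{\wt{\G}}(\wt{\L},\wt{\lambda})^F$ corresponds exactly to the stabiliser of $\lambda$, namely $W_\G(\L,\lambda)^F$ — using once more the stabiliser equality. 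The main obstacle is thus the bookkeeping around $\n_{\wt{\G}}(\L) = \z(\wt{\G})\n_\G(\L)$ and taking $F$-fixed points compatibly; none of it is deep, but it requires care that passing to $F$-fixed points of the product $\z(\wt{\G})\n_\G(\L)$ does not lose surjectivity, which follows since $\z(\wt{\G})$ (being connected, or at least because $\wt{\G}/\G$ computations reduce to it) contributes no obstruction to the relevant quotient.
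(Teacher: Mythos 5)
Your first half — the equality $\n_{\wt{\G}}(\L)^F_\lambda=\n_{\wt{\G}}(\L)^F_{\wt{\lambda}}$ via uniqueness of the unipotent extension from \cite[Proposition 13.20]{Dig-Mic91} — is exactly the paper's argument, and your emphasis that the \emph{unipotent} extension is the relevant one is well placed.

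For the second half the paper is terse: it simply asserts $\n_{\wt{\G}}(\L)^F_{\wt{\lambda}}=\wt{\L}^F\n_{\G}(\L)^F_\lambda$ and concludes. You correctly identify that this product-of-$F$-fixed-points identity is the only real work, and you are right to flag that $\wt{\G}^F=\z(\wt{\G})^F\G^F$ is generally false. However, your proposed justification — that ``$\z(\wt{\G})$ being connected contributes no obstruction'' — points at the wrong group. The obstruction to passing $\n_{\wt{\G}}(\L)=\z(\wt{\G})\n_\G(\L)$ through $F$-fixed points lives in $H^1(F,\z(\wt{\G})\cap\n_\G(\L))=H^1(F,\z(\G))$, and $\z(\G)$ is \emph{finite} (here $\G$ is simply connected), so Lang's theorem does not apply to that decomposition and this $H^1$ need not vanish — this is precisely why the naive $\wt{\G}^F=\z(\wt{\G})^F\G^F$ fails. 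The decomposition that works is $\n_{\wt{\G}}(\L)=\wt{\L}\,\n_\G(\L)$, whose intersection is $\wt{\L}\cap\n_\G(\L)=\L$, a \emph{connected} group: given $x\in\n_{\wt{\G}}(\L)^F$, write $x=\wt{\ell}\,n$ with $\wt{\ell}\in\wt{\L}$, $n\in\n_\G(\L)$; then $\wt{\ell}^{-1}F(\wt{\ell})=nF(n)^{-1}\in\L$, Lang's theorem on $\L$ produces $a\in\L$ with $a^{-1}F(a)=\wt{\ell}^{-1}F(\wt{\ell})$, and $x=(\wt{\ell}a^{-1})(an)$ with $\wt{\ell}a^{-1}\in\wt{\L}^F$ and $an\in\n_\G(\L)^F$. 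Since $\wt{\L}^F$ stabilises $\lambda$, intersecting with the $\lambda$-stabiliser gives $\n_{\wt{\G}}(\L)^F_\lambda=\wt{\L}^F\n_\G(\L)^F_\lambda$, and then your kernel computation $\n_\G(\L)^F_\lambda\cap\wt{\L}^F=\L^F$ finishes the isomorphism of relative Weyl groups. So the structure of your argument is sound, but the one step you acknowledged as delicate needs to be routed through $\L$, not $\z(\wt{\G})$.
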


\begin{proof}
Since $\wt{\lambda}$ extends $\lambda$, it is clear that the stabiliser $\n_{\wt{\G}}(\L)^F_{\wt{\lambda}}$ is contained in $\n_{\wt{\G}}(\L)^F_\lambda$. On the other hand, let $x\in \n_{\wt{\G}}(\L)^F_\lambda$ and observe that $\wt{\lambda}^x$ is a unipotent character of $\wt{\L}^F$ that restricts to $\lambda^x=\lambda$. Then, \cite[Proposition 13.20]{Dig-Mic91} implies that $\wt{\lambda}^x=\wt{\lambda}$ and therefore $x\in\n_{\wt{\G}}(\L)^F_{\wt{\lambda}}$. From this, we also conclude that $\n_{\wt{\G}}(\L)^F_{\wt{\lambda}}=\wt{\L}^F\n_{\G}(\L)^F_\lambda$ and therefore that $W_{\wt{\G}}(\wt{\L},\wt{\lambda})^F\simeq W_\G(\L,\lambda)^F$.
\end{proof}

As a consequence of the lemma above, we show that when $\lambda$ extends to its stabiliser $\n_\G(\L)^F_\lambda$, then every irreducible character of $\n_\G(\L)$ that lies above $\lambda$ is $\n_{\wt{\G}}(\L)^F$-invariant and extends to $\n_{\wt{\G}}(\L)^F$.

\begin{cor}
\label{cor:Invariant characters over unipotent}
Let $(\L,\lambda)$ be a unipotent $e$-cuspidal pair of $(\G,F)$ and suppose that $\lambda$ has an extension $\lambda^\diamond\in\irr(\n_\G(\L)_\lambda^F)$. Then every character of $\n_\G(\L)^F$ lying above $\lambda$ extends to $\n_{\wt{\G}}(\L)^F$.
\end{cor}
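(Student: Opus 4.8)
The plan is to reduce the statement, via the Clifford correspondence and the Mackey formula, to the assertion that the unipotent character $\wt\lambda$ of $\wt\L^F$ extends to its stabiliser $\n_{\wt\G}(\L)^F_\lambda$, and then to deduce this extension by transferring the vanishing of an obstruction cocycle from the situation over $\G$ to the one over $\wt\G$, along the isomorphism of relative Weyl groups provided by Lemma~\ref{lem:Relative weyl groups and regular embeddings}.

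First I would fix notation: write $N=\n_\G(\L)^F$, $\wt N=\n_{\wt\G}(\L)^F$, $N_\lambda=\n_\G(\L)^F_\lambda$ and $\wt N_\lambda=\n_{\wt\G}(\L)^F_\lambda$, and let $\wt\lambda\in\uch(\wt\L^F)$ be the unipotent character restricting to $\lambda$. By Lemma~\ref{lem:Relative weyl groups and regular embeddings} and its proof we have $\wt N_\lambda=\n_{\wt\G}(\L)^F_{\wt\lambda}=\wt\L^F N_\lambda$; since moreover $\n_{\wt\G}(\L)=\wt\L\,\n_\G(\L)$, a standard application of Lang's theorem likewise gives $\wt N=\wt\L^F N$. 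Because $\z(\wt\G)\cap\G\subseteq\z(\G)\subseteq\L$ we have $\wt\L\cap\G=\L$, hence $\wt\L^F\cap N=\wt\L^F\cap N_\lambda=\L^F$, and so the inclusion $N_\lambda\hookrightarrow\wt N_\lambda$ induces a natural isomorphism $N_\lambda/\L^F\xrightarrow{\ \sim\ }\wt N_\lambda/\wt\L^F$. I then claim it suffices to prove that $\wt\lambda$ extends to $\wt N_\lambda$. Indeed, if this holds, then Gallagher's theorem for $\wt\L^F\unlhd\wt N_\lambda$ together with the isomorphism above shows that restriction to $N_\lambda$ is a degree-preserving bijection $\irr(\wt N_\lambda\mid\wt\lambda)\to\irr(N_\lambda\mid\lambda)$; in particular every $\psi\in\irr(N_\lambda\mid\lambda)$ extends to some $\wh\psi\in\irr(\wt N_\lambda)$. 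Now take $\vartheta\in\irr(N\mid\lambda)$ and, by the Clifford correspondence for $\L^F\unlhd N$, write $\vartheta=\psi^N$ with $\psi\in\irr(N_\lambda\mid\lambda)$. The Clifford correspondence for $\L^F\unlhd\wt N$ shows that $\wh\psi^{\,\wt N}$ is irreducible, and since $\wt N=N\wt N_\lambda$ consists of a single $(N,\wt N_\lambda)$-double coset, with $N\cap\wt N_\lambda=N_\lambda$, the Mackey formula yields $\res_N\big(\wh\psi^{\,\wt N}\big)=\psi^N=\vartheta$. Hence $\wh\psi^{\,\wt N}$ is an extension of $\vartheta$ to $\wt N$.

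It remains to prove that $\wt\lambda$ extends to $\wt N_\lambda$, which is the one step where I expect a real difficulty. By Lemma~\ref{lem:Relative weyl groups and regular embeddings} the character $\wt\lambda$ is $\wt N_\lambda$-invariant, so one can choose a $\wt\L^F$-normalised projective representation $\mathcal{P}$ of $\wt N_\lambda$ whose restriction to $\wt\L^F$ is an ordinary representation affording $\wt\lambda$; its factor set $\alpha$ is then inflated from $\wt N_\lambda/\wt\L^F$, and the class $[\alpha]\in H^2(\wt N_\lambda/\wt\L^F,\mathbb{C}^\times)$ is exactly the obstruction to $\wt\lambda$ being extendible to $\wt N_\lambda$ (see \cite[Chapter 11]{Isa76} and \cite[Chapter 5]{Nav18}). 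Restricting $\mathcal{P}$ to $N_\lambda$ produces an $\L^F$-normalised projective representation of $N_\lambda$ whose restriction to $\L^F$ affords $\wt\lambda|_{\L^F}=\lambda$; thus its factor set $\alpha|_{N_\lambda\times N_\lambda}$ is inflated from $N_\lambda/\L^F$ and its class represents the obstruction to $\lambda$ being extendible to $N_\lambda$, which vanishes by hypothesis since $\lambda^\diamond$ exists. As both cocycles arise from the single factor set $\alpha$ by restriction and deflation, they correspond under the isomorphism $N_\lambda/\L^F\xrightarrow{\ \sim\ }\wt N_\lambda/\wt\L^F$, which induces an isomorphism on $H^2(-,\mathbb{C}^\times)$; therefore $[\alpha]$ is trivial and $\wt\lambda$ extends to $\wt N_\lambda$. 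In effect, this amounts to saying that the character triples $(\wt N_\lambda,\wt\L^F,\wt\lambda)$ and $(N_\lambda,\L^F,\lambda)$ carry the same cohomology class, which is the regular-embedding counterpart of the relative Weyl group isomorphism of Lemma~\ref{lem:Relative weyl groups and regular embeddings}; the remaining bookkeeping in the previous paragraph is routine Clifford theory.
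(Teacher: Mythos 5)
Your proposal is correct, and it shares the overall structure of the paper's argument — both rely on Lemma~\ref{lem:Relative weyl groups and regular embeddings} and the resulting isomorphism $N_\lambda/\L^F\simeq\wt N_\lambda/\wt\L^F$, and both reduce everything to the single assertion that $\wt\lambda$ extends to its stabiliser $\wt N_\lambda$. The genuine divergence is in how that extension is produced. The paper invokes \cite[Lemma 4.1(a)]{Spa10}, which returns an extension $\wt\lambda^\diamond$ of $\lambda^\diamond$ to $\wt N_\lambda$ that simultaneously extends $\wt\lambda$; you instead give a self-contained obstruction-cocycle argument, showing that the class in $H^2(\wt N_\lambda/\wt\L^F,\mathbb{C}^\times)$ governing extendibility of $\wt\lambda$ is carried by the quotient isomorphism onto the corresponding class for $(N_\lambda,\L^F,\lambda)$, which vanishes by the hypothesis that $\lambda^\diamond$ exists. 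In effect you re-derive, in this special case, exactly the content the paper outsources to Sp\"ath's lemma; this buys independence from the reference at the cost of reproving it. The bookkeeping in the first part is also organised a bit differently — you argue directly with the restriction bijection $\irr(\wt N_\lambda\mid\wt\lambda)\to\irr(N_\lambda\mid\lambda)$ and close with the Mackey formula for the single double coset $N\wt N_\lambda=\wt N$, whereas the paper writes $\psi=(\eta\lambda^\diamond)^N$ explicitly via Gallagher and appeals to \cite[Problem 5.2]{Isa76} — but these are equivalent. One small point worth making explicit is your parenthetical appeal to Lang's theorem for $\wt N=\wt\L^F N$: the clean justification is that $\z(\wt\G)$ is connected, so $\wt\G^F=\z(\wt\G)^F\G^F$, and writing $x\in\wt N$ as $zg$ with $z\in\z(\wt\G)^F\subseteq\wt\L^F$ and $g\in\G^F$ forces $g\in\n_\G(\L)^F=N$ since $z$ is central; as stated the reference is correct but compressed.
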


\begin{proof}
To start, we fix a unipotent extension $\wt{\lambda}$ of $\lambda$ to $\wt{\L}^F$ and recall that $\n_{\wt{\G}}(\L)^F_\lambda=\n_{\wt{\G}}(\L)^F_{\wt{\lambda}}$ according to Lemma \ref{lem:Relative weyl groups and regular embeddings}. Then, applying \cite[Lemma 4.1 (a)]{Spa10} we deduce that there exists an extension $\wt{\lambda}^\diamond$ of $\lambda^\diamond$ to $\n_{\wt{\G}}(\L)^F_\lambda$ that also extends $\wt{\lambda}$. Consider now an irreducible character $\psi$ of $\n_\G(\L)^F$ lying above $\lambda$. By Gallagher's theorem \cite[Corollary 6.17]{Isa76} and the Clifford correspondence \cite[Theorem 6.11]{Isa76}, it follows that there exists an irreducible character $\eta$ of the relative Weyl group $W_\G(\L,\lambda)^F$ such that $\psi$ is induced from the irreducible character $\psi_0:=\eta\lambda^\diamond$. Moreover, by using Lemma \ref{lem:Relative weyl groups and regular embeddings}, we have $W_{\wt{\G}}(\wt{\L},\wt{\lambda})^F\simeq W_\G(\L,\lambda)^F$. Then, $\eta$, viewed as a character of $\n_\G(\L)^F_\lambda$, admits an extension, say $\wt{\eta}$, to $\n_{\wt{\G}}(\L)^F_\lambda$. Now, define $\wt{\psi}_0:=\wt{\eta}\wt{\lambda}^\diamond$ and observe that $\wt{\psi}_0$ lies above $\wt{\lambda}$. By the Clifford correspondence, it follows that the character $\wt{\psi}$ of $\n_{\wt{\G}}(\L)^F$ induced from $\wt{\psi}_0$ is irreducible and therefore, applying \cite[Problem 5.2]{Isa76}, we conclude that $\wt{\psi}$ extends $\psi$. The proof is now complete.
\end{proof}

We can now construct a parametrisation of unipotent $e$-Harish-Chandra series in the group $\wt{\G}^F$ which agrees with the bijection $\Omega_{(\L,\lambda)}^\G$ from Corollary \ref{cor:Equivariant BMM, extended} via restriction of characters.

\begin{prop}
\label{prop:Bijection commuting diagram}
Let $(\L,\lambda)$ be a unipotent $e$-cuspidal pair of $(\G,F)$ and suppose that $\lambda$ has an extension $\lambda^\diamond\in\irr(\n_\G(\L)_\lambda^F)$ which is additionally $\aut_{\mathbb{F}}(\G^F)_{(\L,\lambda)}$-invariant. If $\wt{\lambda}$ is a unipotent extension of $\lambda$ to $\wt{\L}^F$, then there exists a bijection $\wt{\Omega}_{(\wt{\L},\wt{\lambda})}^{\wt{\G}}$ making the following diagram commute
\begin{center}
\begin{tikzcd}
\E\left(\wt{\G}^F,(\wt{\L},\wt{\lambda})\right)\arrow[r, "\wt{\Omega}_{(\wt{\L},\wt{\lambda})}^{\wt{\G}}"]\arrow[d, swap, "{\rm Res}^{\wt{\G}^F}_{\G^F}"] &\irr\left(\n_{\wt{\G}}(\L)^F\enspace\middle|\enspace \wt{\lambda}\right)\arrow[d, "{\rm Res}^{\n_{\wt{\G}}(\L)^F}_{\n_\G(\L)^F}"]
\\
\E\left(\G^F,(\L,\lambda)\right)\arrow[r, swap, "\Omega_{(\L,\lambda)}^\G"] &\irr\left(\n_{\G}(\L)^F\enspace\middle|\enspace \lambda\right)
\end{tikzcd}
\end{center}
and where $\Omega_{(\L,\lambda)}^\G$ is the bijection given by Corollary \ref{cor:Equivariant BMM, extended}.
\end{prop}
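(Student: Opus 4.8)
The plan is to build $\wt{\Omega}^{\wt{\G}}_{(\wt{\L},\wt{\lambda})}$ as a composition of three bijections: the bijection between unipotent $e$-Harish-Chandra series induced by restriction of characters, the bijection $\Omega^\G_{(\L,\lambda)}$ of Corollary \ref{cor:Equivariant BMM, extended}, and the inverse of a second bijection, this time between the sets $\irr(\n_{\wt{\G}}(\L)^F\mid\wt{\lambda})$ and $\irr(\n_\G(\L)^F\mid\lambda)$, also induced by restriction. Once this is done, the commutativity of the diagram is automatic, so the real content lies in producing the second restriction bijection.

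First I would set up notation. By \cite[Lemma 4.1 (a)]{Spa10}, exactly as in the proof of Corollary \ref{cor:Invariant characters over unipotent} and using that $\n_{\wt{\G}}(\L)^F_{\wt{\lambda}}=\n_{\wt{\G}}(\L)^F_\lambda$ (Lemma \ref{lem:Relative weyl groups and regular embeddings}), there is a character $\wt{\lambda}^\diamond\in\irr(\n_{\wt{\G}}(\L)^F_\lambda)$ extending both $\lambda^\diamond$ and $\wt{\lambda}$. From the proof of Lemma \ref{lem:Relative weyl groups and regular embeddings} we have $\n_{\wt{\G}}(\L)^F_{\wt{\lambda}}=\wt{\L}^F\n_\G(\L)^F_\lambda$ with $\wt{\L}^F\cap\n_\G(\L)^F_\lambda=\L^F$, so the inclusion $\n_\G(\L)^F_\lambda\hookrightarrow\n_{\wt{\G}}(\L)^F_{\wt{\lambda}}$ induces an isomorphism $W_\G(\L,\lambda)^F\simeq W_{\wt{\G}}(\wt{\L},\wt{\lambda})^F$; I write $\eta\mapsto\wt{\eta}$ for the corresponding identification of irreducible characters, so that $\wt{\eta}$, viewed as a character of $\n_{\wt{\G}}(\L)^F_{\wt{\lambda}}$ that is trivial on $\wt{\L}^F$, restricts to $\eta$ on $\n_\G(\L)^F_\lambda$. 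Finally, since $\L$ and $\wt{\L}$ are connected, the same Lang-theorem argument as in Lemma \ref{lem:Relative weyl groups and regular embeddings} (applied to the trivial character, using $\wt{\L}\unlhd\n_{\wt{\G}}(\L)=\wt{\L}\,\n_\G(\L)$ and $\wt{\L}\cap\n_\G(\L)=\L$) gives $\n_{\wt{\G}}(\L)^F=\wt{\L}^F\n_\G(\L)^F$; in particular $\n_{\wt{\G}}(\L)^F=\n_\G(\L)^F\n_{\wt{\G}}(\L)^F_{\wt{\lambda}}$.

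Next I would establish the two restriction bijections. By \cite[Proposition 13.20]{Dig-Mic91} restriction is a bijection from $\uch(\wt{\G}^F)$ onto $\uch(\G^F)$; since Deligne--Lusztig induction is compatible with the regular embedding (so that $\res^{\wt{\G}^F}_{\G^F}\R_{\wt{\L}}^{\wt{\G}}(\wt{\lambda})=\R_\L^\G(\lambda)$) and unipotent $e$-Harish-Chandra series partition the unipotent characters, this restricts to a bijection $\E(\wt{\G}^F,(\wt{\L},\wt{\lambda}))\to\E(\G^F,(\L,\lambda))$. For the second bijection I claim restriction induces a bijection $\irr(\n_{\wt{\G}}(\L)^F\mid\wt{\lambda})\to\irr(\n_\G(\L)^F\mid\lambda)$. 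Well-definedness and injectivity use Corollary \ref{cor:Invariant characters over unipotent}: any constituent of $\res^{\n_{\wt{\G}}(\L)^F}_{\n_\G(\L)^F}\wt{\psi}$ lying above $\lambda$ is $\n_{\wt{\G}}(\L)^F$-invariant and extends to $\n_{\wt{\G}}(\L)^F$, so Clifford's theorem forces $\res^{\n_{\wt{\G}}(\L)^F}_{\n_\G(\L)^F}\wt{\psi}\in\irr(\n_\G(\L)^F\mid\lambda)$; and if $\wt{\psi}_1,\wt{\psi}_2\in\irr(\n_{\wt{\G}}(\L)^F\mid\wt{\lambda})$ have the same restriction then $\wt{\psi}_2=\wt{\psi}_1\beta$ for some $\beta\in\irr(\n_{\wt{\G}}(\L)^F/\n_\G(\L)^F)$, whence restricting to $\wt{\L}^F$ shows that $\wt{\lambda}(\beta|_{\wt{\L}^F})$ is $\n_{\wt{\G}}(\L)^F$-conjugate to $\wt{\lambda}$ and hence unipotent, so that—being a unipotent character of $\wt{\L}^F$ restricting to $\lambda$—it equals $\wt{\lambda}$ by \cite[Proposition 13.20]{Dig-Mic91}, giving $\beta|_{\wt{\L}^F}=1$ and therefore $\beta=1$ since also $\beta|_{\n_\G(\L)^F}=1$ and $\n_{\wt{\G}}(\L)^F=\wt{\L}^F\n_\G(\L)^F$. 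For surjectivity, given $\psi\in\irr(\n_\G(\L)^F\mid\lambda)$ with Clifford--Gallagher correspondent $\lambda^\diamond\eta\in\irr(\n_\G(\L)^F_\lambda)$, the character $\wt{\psi}:=(\wt{\lambda}^\diamond\wt{\eta})^{\n_{\wt{\G}}(\L)^F}$ lies in $\irr(\n_{\wt{\G}}(\L)^F\mid\wt{\lambda})$ by Gallagher's theorem and the Clifford correspondence \cite[Corollary 6.17 and Theorem 6.11]{Isa76}, and the Mackey formula together with $\n_{\wt{\G}}(\L)^F=\n_\G(\L)^F\n_{\wt{\G}}(\L)^F_{\wt{\lambda}}$ and $\n_\G(\L)^F\cap\n_{\wt{\G}}(\L)^F_{\wt{\lambda}}=\n_\G(\L)^F_\lambda$ yields $\res^{\n_{\wt{\G}}(\L)^F}_{\n_\G(\L)^F}\wt{\psi}=(\lambda^\diamond\eta)^{\n_\G(\L)^F}=\psi$.

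Finally I would define $\wt{\Omega}^{\wt{\G}}_{(\wt{\L},\wt{\lambda})}$ to be the composite of the restriction bijection $\E(\wt{\G}^F,(\wt{\L},\wt{\lambda}))\to\E(\G^F,(\L,\lambda))$, the bijection $\Omega^\G_{(\L,\lambda)}$ of Corollary \ref{cor:Equivariant BMM, extended}, and the inverse of the restriction bijection $\irr(\n_{\wt{\G}}(\L)^F\mid\wt{\lambda})\to\irr(\n_\G(\L)^F\mid\lambda)$. This is a bijection, and postcomposing with $\res^{\n_{\wt{\G}}(\L)^F}_{\n_\G(\L)^F}$ cancels the last inverse and leaves $\Omega^\G_{(\L,\lambda)}\circ\res^{\wt{\G}^F}_{\G^F}$, which is exactly the assertion that the diagram commutes. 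I expect the main obstacle to be the injectivity half of the second restriction bijection: this is the only place where one genuinely needs both that $\wt{\lambda}$ is the \emph{unique} unipotent lift of $\lambda$ to $\wt{\L}^F$ and the identification $\n_{\wt{\G}}(\L)^F=\wt{\L}^F\n_\G(\L)^F$; the remaining steps are routine bookkeeping with Clifford theory, Gallagher's theorem and the Mackey formula.
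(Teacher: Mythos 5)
Your proof is correct and follows essentially the same strategy as the paper: show that restriction gives bijections $\E(\wt{\G}^F,(\wt{\L},\wt{\lambda}))\to\E(\G^F,(\L,\lambda))$ and $\irr(\n_{\wt{\G}}(\L)^F\mid\wt{\lambda})\to\irr(\n_\G(\L)^F\mid\lambda)$, then let commutativity of the diagram define $\wt{\Omega}$. The paper cites \cite[Proposition 3.1]{Cab-Eng94} for the first bijection and constructs the inverse of the second by twisting an arbitrary extension $\wt{\psi}_0$ of $\psi$ by a linear character inflated from $\wt{\L}^F/\L^F$, whereas you verify injectivity and surjectivity of restriction directly — your injectivity argument via the uniqueness of the unipotent lift $\wt{\lambda}$ makes explicit a point the paper leaves implicit — but the route is the same.
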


\begin{proof}
First, observe that $\lambda$ has an extension $\wt{\lambda}$ to $\wt{\L}^F$ according to \cite[Proposition 13.20]{Dig-Mic91}. Moreover, restrictions from $\wt{\G}^F$ to $\G^F$ induces a bijection from the set $\E(\wt{\G}^F,(\wt{\L},\wt{\lambda}))$ to $\E(\G^F,(\L,\lambda))$ according to \cite[Proposition 3.1]{Cab-Eng94}. Next, consider a character $\psi\in\irr(\n_\G(\L)^F)$ lying above $\lambda$ and observe that $\psi$ admits an extension $\wt{\psi}_0\in\irr(\n_{\wt{\G}}(\L)^F)$ by Corollary \ref{cor:Invariant characters over unipotent}. Let $\wt{\lambda}_0$ be an irreducible constituent of the restriction $\wt{\psi}_{0,\wt{\L}^F}$ and notice that $\wt{\lambda_0}$ is an extension of $\lambda$ since $\wt{\L}^F/\L^F$ is abelian. Now, Gallagher's theorem \cite[Corollary 6.17]{Isa76} implies that there exists a linear character $\nu\in\irr(\wt{\L}^F/\L^F)$ such that $\wt{\lambda}_0\nu=\wt{\lambda}$. Since $\n_{\wt{\G}}(\L)^F/\n_\G(\L)^F\simeq \wt{\L}^F/\L^F$ we can identify $\nu$ with its extension to $\n_{\wt{\G}}(\L)^F$. Then, it follows that the character $\wt{\psi}:=\wt{\psi}_0\nu$ is an extension of $\psi$ to $\n_{\wt{\G}}(\L)^F$ lying above $\wt{\lambda}$. Then the assignment $\psi\mapsto\wt{\psi}$ defines a bijection between $\irr(\n_\G(\L)^F\mid \lambda)$ and $\irr(\n_{\wt{\G}}(\L)^F\mid \wt{\lambda})$ whose inverse is given by restriction of characters. We can now define
\[\wt{\Omega}_{(\wt{\L},\wt{\lambda})}^{\wt{\G}}\left(\wt{\chi}
\right):=\wt{\psi}\]
for every $\wt{\chi}\in\E(\wt{\G}^F,(\wt{\L},\wt{\lambda}))$ and $\wt{\psi}\in\irr(\n_{\wt{\G}}(\L)^F\mid \wt{\lambda})$ whenever $\Omega_{(\L,\lambda)}^\G(\wt{\chi}_{\G^F})=\wt{\psi}_{\n_\G(\L)^F}$.
\end{proof}

\subsection{Construction of $\G^F$-block isomorphisms of character triples}
\label{sec:Construction of isomorphisms of character triples}

From now on, we assume that $\G$ is simple, simply connected and of type $\bf{A}$, $\bf{B}$ or $\bf{C}$. Furthermore, we suppose that $\ell$ is odd and denote by $e$ the order of $q$ modulo $\ell$.

We now give a more explicit construction of the group of automorphism $\aut_\mathbb{F}(\G^F)$. Fix a maximally split torus $\T_0$ contained in an $F$-stable Borel subgroup $\B_0$ of $\G$. This choice corresponds to a set of graph automorphisms $\gamma:\G\to \G$ and a field endomorphism $F_0:\G\to \G$. More precisely, if we consider the set of simple roots $\Delta\subseteq \Phi(\G,\T_0)$ corresponding to the choice $\T_0\subseteq \B_0$, then we have an automorphism $\gamma:\G\to \G$ given by $\gamma(x_\alpha(t)):=x_{\gamma(\alpha)}(t)$ for every $t\in\mathbb{G}_{\rm a}$ and $\alpha\in\pm\Delta$ and where $\gamma$ is a symmetry of the Dynkin diagram of $\Delta$, while $F_0(x_\alpha(t)):=x_{\alpha}(t^p)$ for every $t\in\mathbb{G}_{\rm a}$ and $\alpha\in\Phi(\G,\T_0)$. Here, we denote by $x_\alpha:\mathbb{G}_{\rm a}\to \G$ a one-parameter subgroup corresponding to $\alpha\in\Phi(\G,\T_0)$. We define the subgroup $\mathcal{A}$ of $\aut_\mathbb{F}(\G^F)$ generated by the graph and field automorphisms described above.

In addition, we choose our regular embedding $\G\leq \wt{\G}$ to be defined in such a way that the graph and field automorphisms extends to $\wt{\G}$ (see, for instance, \cite[Section 2B]{Mal-Spa16}). In particular, the group $\mathcal{A}$ acts via automorphisms on $\wt{\G}^F$ and we can form the external semidirect product $\wt{\G}^F\rtimes \mathcal{A}$ which acts on $\G^F$. It turns out that $\wt{\G}^F\rtimes \mathcal{A}$ and $\aut_\mathbb{F}(\G^F)$ induce the same set of automorphisms on the finite group $\G^F$ (see, for instance, \cite[Section 2.5]{GLS}).

Throughout this section, we consider a fixed unipotent $e$-cuspidal pair $(\L,\lambda)$ of $(\G,F)$ and a unipotent extension $\wt{\lambda}$ of $\lambda$ to $\wt{\L}^F$ (whose existence is ensured by \cite[Proposition 13.20]{Dig-Mic91}) where, as always, we define $\wt{\L}:=\L\z(\wt{\G})$. In the next lemma, we show that the hypothesis of Corollary \ref{cor:Equivariant BMM, extended} is satisfied under our assumptions.

\begin{lem}
\label{lem:Extension to the graph and field locally}
There exists an extension $\lambda^\diamond$ of $\lambda$ to $\n_\G(\L)_\lambda^F$ that is $(\wt{\G}^F\mathcal{A})_{(\L,\lambda)}$-invariant.
\end{lem}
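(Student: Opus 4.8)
The plan is to extend $\lambda$ in two stages—first to the normaliser in $\G^F$, i.e. to $\n_\G(\L)_\lambda^F$, and then check the required invariance under $(\wt{\G}^F\mathcal{A})_{(\L,\lambda)}$—using the explicit generators of $\mathcal{A}$ together with the hypotheses from the papers \cite{Bro-Spa20}, \cite{Bro22} and \cite{Bro-Ruh23} alluded to in the introduction. Concretely, I would begin by invoking the extendibility conditions established for types $\bf{A}$, $\bf{B}$ and $\bf{C}$ (the conditions of \cite[Definition 5.2]{Ros-Clifford_automorphisms_HC}), which provide an extension map for unipotent characters of $e$-split Levi subgroups compatible with the action of $\mathcal{A}$ and of the diagonal automorphisms coming from $\wt{\G}^F$. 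Since $\lambda$ is a unipotent $e$-cuspidal character of $\L^F$, such a condition gives an extension of $\lambda$ to $(\G^F\L)_\lambda^F$, or more precisely a choice that is stable under the relevant subgroup of automorphisms fixing $(\L,\lambda)$.

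The key steps, in order, would be: (i) record that $\lambda$ is $\wt{\L}^F$-invariant and extends to the unipotent character $\wt{\lambda}$ of $\wt{\L}^F$ by \cite[Proposition 13.20]{Dig-Mic91}; (ii) use the maximal extendibility results for unipotent characters with respect to $\L^F\unlhd \n_\G(\L)^F$ (part of the extendibility conditions cited above) to produce some extension $\lambda'$ of $\lambda$ to $\n_\G(\L)_\lambda^F$; (iii) analyse the obstruction to $(\wt{\G}^F\mathcal{A})_{(\L,\lambda)}$-invariance: the quotient $(\wt{\G}^F\mathcal{A})_{(\L,\lambda)}/\n_\G(\L)_\lambda^F$ acts on the set of extensions of $\lambda$, which is a torsor under $\irr(\n_\G(\L)_\lambda^F/\L^F)=\irr(W_\G(\L,\lambda)^F)$—wait, more precisely a torsor under the linear characters of that relative Weyl group—so the obstruction is a cohomology class in $H^1$ or, more simply, one uses that the action on the \emph{determinant} (or on a suitable root of unity) can be killed by twisting $\lambda'$ by an appropriate linear character of $\n_\G(\L)_\lambda^F/\L^F$; (iv) carry out this twisting, exploiting that $\mathcal{A}$ is generated by field and graph automorphisms whose action on the relevant characters is explicitly controlled, together with the fact that $\wt{\G}^F/\G^F\z(\wt{\G})^F$-type diagonal automorphisms act trivially on unipotent characters and hence their contribution is already handled by the choice of $\wt{\lambda}$.

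The main obstacle I expect is step (iii)–(iv): producing a \emph{canonical} or at least \emph{invariant} extension rather than merely \emph{some} extension. The subtlety is that maximal extendibility only guarantees existence, and the group $(\wt{\G}^F\mathcal{A})_{(\L,\lambda)}$ permutes the possible extensions; one must show this permutation action has a fixed point. The standard device is to pin down the extension by a normalisation condition—for instance requiring $\lambda^\diamond$ to have values in a prescribed cyclotomic field, or fixing its determinantal order, or using that the extension map supplied by the extendibility conditions of \cite{Bro-Spa20}, \cite{Bro22}, \cite{Bro-Ruh23} is \emph{already} asserted to be $\mathcal{A}$-equivariant by construction, so that one only needs to handle the extra diagonal automorphisms coming from $\wt{\G}^F$. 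For the latter, since unipotent characters are $\wt{\G}^F$-invariant, the action of $\wt{\G}^F\n_\G(\L)^F$ on $\lambda$ factors through an abelian quotient, and invariance of a chosen extension can be arranged by averaging/twisting over this abelian group exactly as in \cite[Lemma 4.1]{Spa10} or the arguments in the proof of Corollary \ref{cor:Invariant characters over unipotent}. So the proof should reduce to: quote the type-by-type extendibility condition for the $\mathcal{A}$-part, then patch in the $\wt{\G}^F$-part by a linear-character twist, checking the twist does not destroy the property already obtained.
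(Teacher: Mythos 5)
Your two-stage plan matches the paper's: first obtain an extension from the extendibility results of \cite{Bro-Spa20}, \cite{Bro22}, \cite{Bro-Ruh23}, then deal with the diagonal automorphisms. You also correctly identify \cite[Lemma 4.1]{Spa10} and Lemma \ref{lem:Relative weyl groups and regular embeddings} (via Corollary \ref{cor:Invariant characters over unipotent}) as the relevant tools for the second stage. So the references and the overall strategy are right.

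The divergence, and a potential gap, is in how you handle the $\wt{\G}^F$-part. You propose to twist the extension by a linear character of $\n_\G(\L)^F_\lambda/\L^F$ and then ``check the twist does not destroy the property already obtained'', i.e.\ the $(\G^F\mathcal{A})_{(\L,\lambda)}$-invariance. This would require showing that the $1$-cocycle $x\mapsto\beta_x$ on $\wt{\L}^F$, defined by $(\lambda^\diamond)^x=\lambda^\diamond\beta_x$, is a coboundary realised by an $(\G^F\mathcal{A})_{(\L,\lambda)}$-invariant linear character—something you leave unverified, and which is exactly the crux. The paper avoids the issue entirely: after recording the group-theoretic identity $(\wt{\G}^F\mathcal{A})_{(\L,\lambda)}=\wt{\L}^F(\G^F\mathcal{A})_{(\L,\lambda)}$, it observes that the extension $\lambda^\diamond$ supplied by \cite{Bro-Spa20}, \cite{Bro22}, \cite{Bro-Ruh23} extends further to $\n_{\wt{\G}}(\L)^F_\lambda$ (by Lemma \ref{lem:Relative weyl groups and regular embeddings} together with \cite[Lemma 4.1\,(a)]{Spa10}), and since $\n_\G(\L)^F_\lambda\unlhd\n_{\wt{\G}}(\L)^F_\lambda\supseteq\wt{\L}^F$ this forces $\lambda^\diamond$ to be $\wt{\L}^F$-invariant with no twist at all (the cocycle $\beta$ is trivial, not merely a coboundary). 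So you should replace the ``averaging/twisting'' step with this direct observation: $\wt{\L}^F$-invariance is automatic once the extension to $\n_{\wt{\G}}(\L)^F_\lambda$ is available, which is precisely what \cite[Lemma 4.1\,(a)]{Spa10} delivers in this situation.
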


\begin{proof}
Using \cite[Theorem 4.3 (i)]{Bro-Spa20}, \cite[Theorem 1.2 (a)]{Bro22} and the results of \cite{Bro-Ruh23}, we obtain an extension $\lambda^\diamond$ of $\lambda$ to $\n_\G(\L)_\lambda^F$ which is $(\G^F\mathcal{A})_{(\L,\lambda)}$-invariant. Since $(\wt{\G}^F\mathcal{A})_{(\L,\lambda)}=\wt{\L}(\G^F\mathcal{A})_{(\L,\lambda)}$ it suffices to show that $\lambda^\diamond$ is $\wt{\L}^F$-invariant. However, the latter assertion follows immediately from the fact that $\lambda^\diamond$ extends to $\n_{\wt{\G}}(\L)^F_{\lambda}$ according to Lemma \ref{lem:Relative weyl groups and regular embeddings} and \cite[Lemma 4.1 (a)]{Spa10}.
\end{proof}

As an immediate consequence of the lemma above, we deduce that every character of $\n_\G(\L)^F$ lying above $\lambda$ extends to $\n_{\wt{\G}}(\L)^F$. This can be considered as a local analogue of \cite[Proposition 13.20]{Dig-Mic91}.

\begin{lem}
\label{lem:Extension of local unipotent characters to diagonal}
Every irreducible character of $\n_\G(\L)^F$ lying above $\lambda$ extends to $\n_{\wt{\G}}(\L)^F$. 
\end{lem}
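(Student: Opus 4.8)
The statement is the assertion that every $\chi \in \irr(\n_\G(\L)^F \mid \lambda)$ extends to $\n_{\wt\G}(\L)^F$. The plan is to deduce this from the combination of Corollary~\ref{cor:Invariant characters over unipotent} and Lemma~\ref{lem:Extension to the graph and field locally}. Lemma~\ref{lem:Extension to the graph and field locally} furnishes an extension $\lambda^\diamond$ of $\lambda$ to $\n_\G(\L)^F_\lambda$ (even one with an invariance property, though here we only need that it exists). This is precisely the hypothesis of Corollary~\ref{cor:Invariant characters over unipotent}, whose conclusion is exactly that every character of $\n_\G(\L)^F$ lying above $\lambda$ extends to $\n_{\wt\G}(\L)^F$. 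So the proof is essentially a one-line citation: apply Corollary~\ref{cor:Invariant characters over unipotent} using the extension $\lambda^\diamond$ produced by Lemma~\ref{lem:Extension to the graph and field locally}.

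If one wanted to unwind this rather than cite Corollary~\ref{cor:Invariant characters over unipotent} directly, the internal mechanics would run as follows. Fix the unipotent extension $\wt\lambda$ of $\lambda$ to $\wt\L^F$ (which exists by \cite[Proposition 13.20]{Dig-Mic91}), and recall from Lemma~\ref{lem:Relative weyl groups and regular embeddings} that $\n_{\wt\G}(\L)^F_\lambda = \n_{\wt\G}(\L)^F_{\wt\lambda}$ and $W_{\wt\G}(\wt\L,\wt\lambda)^F \simeq W_\G(\L,\lambda)^F$. Given $\chi \in \irr(\n_\G(\L)^F \mid \lambda)$, the Clifford correspondence and Gallagher's theorem write $\chi$ as induced from $\eta\lambda^\diamond$ for a unique $\eta \in \irr(W_\G(\L,\lambda)^F)$; extending $\lambda^\diamond$ to $\wt{\lambda}^\diamond$ on $\n_{\wt\G}(\L)^F_\lambda$ via \cite[Lemma 4.1 (a)]{Spa10}, extending $\eta$ across the isomorphic relative Weyl groups, forming the product and inducing up gives an extension of $\chi$ to $\n_{\wt\G}(\L)^F$ by \cite[Problem 5.2]{Isa76}. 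But since Corollary~\ref{cor:Invariant characters over unipotent} already packages exactly this argument, I would not reprove it.

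There is really no obstacle here: the only substantive input is the existence of $\lambda^\diamond$, and that is handled by Lemma~\ref{lem:Extension to the graph and field locally}, which in turn rests on the extendibility results of \cite{Bro-Spa20}, \cite{Bro22}, \cite{Bro-Ruh23} (this is where the restriction to types $\mathbf A$, $\mathbf B$, $\mathbf C$ and the hypotheses on $\ell$ enter). Accordingly the proof I would write is:

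\begin{proof}
By Lemma \ref{lem:Extension to the graph and field locally} the character $\lambda$ admits an extension $\lambda^\diamond$ to $\n_\G(\L)^F_\lambda$. The claim now follows immediately from Corollary \ref{cor:Invariant characters over unipotent}.
\end{proof}
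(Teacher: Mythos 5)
Your proof is correct and is exactly the argument the paper gives: the hypothesis of Corollary~\ref{cor:Invariant characters over unipotent} is furnished by Lemma~\ref{lem:Extension to the graph and field locally}, and the corollary then yields the statement directly.
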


\begin{proof}
This follows from Corollary \ref{cor:Invariant characters over unipotent} whose hypothesis is satisfied by Lemma \ref{lem:Extension to the graph and field locally}.
\end{proof}

We point out that, under our assumptions, every irreducible character of $\n_\G(\L)^F$ lying above $\lambda$ extends to its stabiliser in $\n_{\wt{\G}}(\L)^F$ because the quotient $\n_{\wt{\G}}(\L)^F/\n_\G(\L)^F$ is cyclic according to \cite[Proposition 1.7.5]{Gec-Mal20}. However, in the lemma above we are also showing, using independent methods, that each such character is $\n_{\wt{\G}}(\L)^F$-invariant.

Using Lemma \ref{lem:Extension to the graph and field locally}, we can now define bijections $\Omega:=\Omega_{(\L,\lambda)}^\G$ and $\wt{\Omega}_{(\wt{\L},\wt{\lambda})}^{\wt{\G}}$ as described in Corollary \ref{cor:Equivariant BMM, extended} and Proposition \ref{prop:Bijection commuting diagram} respectively. In what follows, we consider the sets of characters $\mathcal{G}:=\E(\G^F,(\L,\lambda))$, $\mathcal{L}:=\irr(\n_\G(\L)^F\mid \lambda)$, $\wt{\mathcal{G}}:=\E(\wt{\G}^F,(\wt{\L},\wt{\lambda}))$ and $\wt{\mathcal{L}}:=\irr(\n_{\wt{\G}}(\L)^F\mid \wt{\lambda})$. Our next aim is to show that the parametrisation $\Omega$ is compatible with $\G^F$-block isomorphisms of character triples. We start by checking the group theoretic properties required for the existence of such isomorphisms (see \cite[Remark 3.7 (i)]{Spa17}).

\begin{lem}
\label{lem:Frattini with inertial subgroups}
For every $\chi\in\mathcal{G}$ and $\psi:=\Omega(\chi)\in\mathcal{L}$ we have $(\wt{\G}^F\mathcal{A})_{\L,\chi}=(\wt{\G}^F\mathcal{A})_{\L,\psi}$ and $\wt{\G}^F\mathcal{A}_\chi=\G^F(\wt{\G}^F\mathcal{A})_{\L,\psi}$.
\end{lem}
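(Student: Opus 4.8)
The plan is to establish the two claimed equalities of stabilisers by exploiting the equivariance properties already built into the bijection $\Omega$ together with the structure $\wt{\G}^F\mathcal{A} = \wt{\G}^F\rtimes\mathcal{A}$ and the normality of $\G^F$ in $\wt{\G}^F\mathcal{A}$. First I would observe that the group $(\wt{\G}^F\mathcal{A})_{\L}$ of elements stabilising $\L$ acts on both character sets $\mathcal{G}=\E(\G^F,(\L,\lambda))$ and $\mathcal{L}=\irr(\n_\G(\L)^F\mid\lambda)$: indeed such an element normalises $\G^F$, $\n_\G(\L)^F$ and (since it fixes $\L$) permutes the unipotent $e$-cuspidal characters of $\L^F$, hence fixes $\lambda$ because $\lambda$ is characterised among these by lying in the series it generates (or, more directly, $\lambda$ is the unique unipotent character in its $\G^F$-orbit data and is fixed by $\mathcal{A}$-elements stabilising $\L$ up to $\L^F$-conjugacy, which one then promotes using that $\lambda$ is $\L^F$-invariant). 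The key point is that $\Omega$ was constructed in Corollary \ref{cor:Equivariant BMM, extended} to be $\aut_\mathbb{F}(\G^F)_{(\L,\lambda)}$-equivariant, and $\wt{\G}^F\mathcal{A}$ induces automorphisms inside $\aut_\mathbb{F}(\G^F)$; moreover the extension $\lambda^\diamond$ used in the definition of $\Omega$ is $(\wt{\G}^F\mathcal{A})_{(\L,\lambda)}$-invariant by Lemma \ref{lem:Extension to the graph and field locally}. Therefore the whole construction $I^\G_{(\L,\lambda)}(\eta)\mapsto(\lambda^\diamond\eta)^{\n_\G(\L)^F}$ commutes with the action of $(\wt{\G}^F\mathcal{A})_{\L}$, which immediately gives $(\wt{\G}^F\mathcal{A})_{\L,\chi}=(\wt{\G}^F\mathcal{A})_{\L,\psi}$ for $\psi=\Omega(\chi)$.

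For the second equality I would argue by a Frattini-type / double-counting argument. Both sides are subgroups of $\wt{\G}^F\mathcal{A}_\chi$ containing $(\wt{\G}^F\mathcal{A})_{\L,\psi}$, so it suffices to compare indices, or equivalently to show $\wt{\G}^F\mathcal{A}_\chi\subseteq \G^F(\wt{\G}^F\mathcal{A})_{\L,\psi}$. Take $x\in\wt{\G}^F\mathcal{A}$ with $\chi^x=\chi$. Since $\chi\in\E(\G^F,(\L,\lambda))$ and Deligne--Lusztig series are permuted according to their cuspidal data, the pair $(\L,\lambda)^x$ lies in the same $\G^F$-orbit as $(\L,\lambda)$; hence there is $g\in\G^F$ with $(\L,\lambda)^{xg}=(\L,\lambda)$, i.e. $xg\in(\wt{\G}^F\mathcal{A})_{(\L,\lambda)}$. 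Now $xg$ fixes $\chi$ as well (because $g\in\G^F$ fixes $\chi$... this is not automatic, so instead I would phrase it as: $xg$ stabilises $(\L,\lambda)$, and one must still correct it inside $\n_\G(\L)^F$ to make it fix $\chi$). The cleaner route: the number of $\G^F$-conjugates of the pair $(\L,\lambda)$ equals $|\G^F:\n_\G(\L,\lambda)^F|$, and since $\chi$ is $\wt{\G}^F$-invariant (unipotent characters of $\G^F$ are $\wt{\G}^F$-invariant by \cite[Proposition 13.20]{Dig-Mic91}), the orbit of $(\L,\lambda)$ under $\wt{\G}^F\mathcal{A}_\chi$ has size $|\wt{\G}^F\mathcal{A}_\chi:(\wt{\G}^F\mathcal{A}_\chi)_{(\L,\lambda)}|$; comparing these via the Frattini argument $\wt{\G}^F\mathcal{A}_\chi = \G^F\cdot(\wt{\G}^F\mathcal{A}_\chi)_{(\L,\lambda)}$ and then refining $(\wt{\G}^F\mathcal{A}_\chi)_{(\L,\lambda)}$ to the stabiliser of $\psi$ using the first equality yields the claim. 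The input that makes the Frattini argument run is that $\G^F$ acts transitively on the $\G^F$-conjugates of $(\L,\lambda)$ contained in the $\wt{\G}^F\mathcal{A}_\chi$-orbit — which holds because all such conjugates give rise to the same $e$-Harish-Chandra series $\E(\G^F,(\L,\lambda))$ containing $\chi$, so they are all $\G^F$-conjugate by \cite[Theorem 3.2 (1)]{Bro-Mal-Mic93}.

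Putting these together: given $x\in\wt{\G}^F\mathcal{A}_\chi$, write $x=g\cdot y$ with $g\in\G^F$ and $y\in(\wt{\G}^F\mathcal{A})_{(\L,\lambda)}$; since $x$ and $g$ both fix nothing extra yet, I instead note $y=g^{-1}x$ normalises $\L$ and fixes $\lambda$, and $\chi^y = \chi^{g^{-1}x} = (\chi^{g^{-1}})^x$. Because $\chi$ need not be $\G^F$-invariant this requires care, so the argument should really be organised as: the $\wt{\G}^F\mathcal{A}_\chi$-orbit of the \emph{set} $\{(\L,\lambda)^g : g\in\G^F\}$ is a single $\G^F$-class, Frattini gives $\wt{\G}^F\mathcal{A}_\chi = \G^F\,(\wt{\G}^F\mathcal{A}_\chi)_{\L,\lambda}$, and finally $(\wt{\G}^F\mathcal{A}_\chi)_{\L,\lambda}\subseteq(\wt{\G}^F\mathcal{A})_{\L,\chi}=(\wt{\G}^F\mathcal{A})_{\L,\psi}$ by the first part, whence $\wt{\G}^F\mathcal{A}_\chi\subseteq\G^F(\wt{\G}^F\mathcal{A})_{\L,\psi}$; the reverse inclusion is clear since $\G^F$ and $(\wt{\G}^F\mathcal{A})_{\L,\psi}$ both fix $\chi$ (the latter by the first part again). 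The main obstacle I anticipate is the bookkeeping in the Frattini step — specifically verifying that the $\wt{\G}^F\mathcal{A}_\chi$-orbit of $(\L,\lambda)$ is exhausted by $\G^F$-conjugates, which hinges on the uniqueness clause in \cite[Theorem 3.2 (1)]{Bro-Mal-Mic93} for $e$-cuspidal pairs associated to a fixed series, combined with the $\wt{\G}^F$-invariance of unipotent characters; once that is pinned down, everything else is formal manipulation of stabilisers.
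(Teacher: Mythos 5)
There is a genuine gap in the first half of your argument. You assert that any element of $(\wt{\G}^F\mathcal{A})_{\L}$ ``permutes the unipotent $e$-cuspidal characters of $\L^F$, hence fixes $\lambda$ because $\lambda$ is characterised among these by lying in the series it generates.'' This is false: if $x\in(\wt{\G}^F\mathcal{A})_{\L,\chi}$ then $\chi=\chi^x\in\E(\G^F,(\L,\lambda^x))$, and the uniqueness in \cite[Theorem 3.2\,(1)]{Bro-Mal-Mic93} only tells you that $(\L,\lambda^x)$ and $(\L,\lambda)$ are $\G^F$-conjugate, hence $\n_\G(\L)^F$-conjugate -- not that $\lambda^x=\lambda$. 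Consequently you have \emph{not} shown that $(\wt{\G}^F\mathcal{A})_\L$ fixes $\lambda$, and so you cannot invoke the $\aut_\mathbb{F}(\G^F)_{(\L,\lambda)}$-equivariance of $\Omega$ directly to conclude $(\wt{\G}^F\mathcal{A})_{\L,\chi}=(\wt{\G}^F\mathcal{A})_{\L,\psi}$. The paper's proof closes exactly this gap by a conjugation-correction: given $x\in(\wt{\G}^F\mathcal{A})_{\L,\chi}$, one chooses $y\in\n_\G(\L)^F$ with $(\L,\lambda)^{xy}=(\L,\lambda)$, observes that $xy$ still fixes $\chi$ (since $y\in\G^F$ and characters of $\G^F$ are class functions), so $xy\in(\wt{\G}^F\mathcal{A})_{(\L,\lambda),\chi}=(\wt{\G}^F\mathcal{A})_{(\L,\lambda),\psi}$ by equivariance, and then $\psi^x=\psi^{y^{-1}}=\psi$ because $y\in\n_\G(\L)^F$. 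The reverse inclusion is symmetric, using Clifford's theorem in place of \cite[Theorem 3.2\,(1)]{Bro-Mal-Mic93}.

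Your treatment of the second equality is essentially the same Frattini-type argument the paper uses, but it contains an unfounded worry: you hesitate because ``$g\in\G^F$ fixes $\chi$\dots this is not automatic.'' It is automatic -- $\chi\in\irr(\G^F)$ is a class function, so $\chi^g=\chi$ for every $g\in\G^F$. Once you accept that, your sketch is fine: for $x\in\wt{\G}^F\mathcal{A}_\chi$, \cite[Theorem 3.2\,(1)]{Bro-Mal-Mic93} gives $g\in\G^F$ with $(\L,\lambda)^{xg}=(\L,\lambda)$, so $xg\in(\wt{\G}^F\mathcal{A})_{(\L,\lambda),\chi}\leq(\wt{\G}^F\mathcal{A})_{\L,\chi}=(\wt{\G}^F\mathcal{A})_{\L,\psi}$ by the first part, and $x=(xg)g^{-1}\in(\wt{\G}^F\mathcal{A})_{\L,\psi}\G^F=\G^F(\wt{\G}^F\mathcal{A})_{\L,\psi}$ using normality of $\G^F$. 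So the second half of your proposal lands in the right place; you just need to repair the first half by inserting the $\n_\G(\L)^F$-correction step rather than asserting $\lambda$ is fixed.
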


\begin{proof}
We argue as in the proof of \cite[Lemma 4.2]{Ros-Generalized_HC_theory_for_Dade}. To start, we observe that $(\wt{\G}^F\mathcal{A})_{(\L,\lambda),\chi}=(\wt{\G}^F\mathcal{A})_{(\L,\lambda),\psi}$ since the map $\Omega$ is $(\wt{\G}^F\mathcal{A})_{(\L,\lambda)}$-equivariant. Set $U(\chi):=(\wt{\G}^F\mathcal{A})_{\L,\chi}$ and $U(\psi):=(\wt{\G}^F\mathcal{A})_{\L,\psi}$. First, consider $x\in U(\chi)$ and observe that according to \cite[Theorem 3.2 (1)]{Bro-Mal-Mic93} there exists $y\in\n_\G(\L)^F$ such that $(\L,\lambda)^{xy}=(\L,\lambda)$. In particular, $xy\in(\wt{\G}^F\mathcal{A})_{(\L,\lambda),\chi}=(\wt{\G}^F\mathcal{A})_{(\L,\lambda),\psi}$ and hence $x\in U(\psi)$ since $\psi^y=\psi$. This shows that $U(\chi)\leq U(\psi)$. On the other hand, suppose that $x\in U(\psi)$. By Clifford's theorem there exists $y\in\n_\G(\L)^F$ such that $\lambda^{xy}=\lambda$ and so $xy\in(\wt{\G}^F\mathcal{A})_{\L,\psi}=(\wt{\G}^F\mathcal{A})_{\L,\chi}$. Since $\chi^y=\chi$ we deduce that $x\in U(\chi)$ and hence $U(\chi)=U(\psi)$. To conclude, it is enough to show that $\wt{\G}^F\mathcal{A}_\chi=\G^F U(\chi)$. First, notice that $\G^F U(\chi)\leq \wt{\G}^F\mathcal{A}_\chi$ since $\chi$ is $\wt{\G}^F$-invariant. On the other hand, for $x\in\wt{\G}^F\mathcal{A}_\chi$ we know that $(\L,\lambda)^{x}$ is $\G^F$-conjugate to $(\L,\lambda)$ thanks to \cite[Theorem 3.2 (1)]{Bro-Mal-Mic93}. Therefore, we obtain $x\in \G^F U(\chi)$ and as explained above this concludes the proof.
\end{proof}

We now apply Lemma \ref{lem:Frattini with inertial subgroups} to show that the map $\wt{\Omega}$ satisfies some useful equivariance properties. Before doing so, we need to introduce some notation. For this purpose, consider a pair $(\G^*,F^*)$ dual to $(\G,F)$ and a pair $(\wt{\G}^*,F^*)$ dual to $(\wt{\G},F)$. Let $i^*:\wt{\G}^*\to \G^*$ be the surjection induced by duality from the inclusion $\G\leq \wt{\G}$ and observe that $\ker(i^*)=\z(\wt{\G}^*)$ since $\G$ is simply connected (see \cite[Section 15.1]{Cab-Eng04}). As shown in \cite[(15.2)]{Cab-Eng04}, there exists an isomorphism
\begin{align}
\ker(i^*)^F&\to \irr\left(\wt{\G}^F/\G^F\right)\label{central kernel and linear characters}
\\
z&\mapsto \wh{z}_{\wt{\G}}\nonumber
\end{align}
Furthermore, if $\L$ is an $F$-stable Levi subgroup of $\G$ and $z\in\ker(i^*)$, then we define $\wh{z}_{\wt{\L}}$ to be the restriction of $\wh{z}_{\wt{\G}}$ to $\wt{\L}^F$ and $\wh{z}_{\n_{\wt{\G}}(\L)}$ to be the restriction of $\wh{z}_{\wt{\G}}$ to $\n_{\wt{\G}}(\L)^F$. We set $\mathcal{K}:=\ker(i^*)$ and obtain an action of the group $\mathcal{K}$ on the characters of $\wt{\G}^F$, $\wt{\L}^F$ and $\n_{\wt{\G}}(\L)^F$ as defined in \cite[Definition 2.1]{Ros-Clifford_automorphisms_HC}. Moreover, we consider the external semidirect product $(\wt{\G}^F\mathcal{A})\ltimes \mathcal{K}$ given by defining $z^x$ as the unique element of $\mathcal{K}$ corresponding to the character $(\wh{z}_{\wt{\G}})^x$ of the quotient $\wt{\G}^F/\G^F$ via the isomorphism specified in \eqref{central kernel and linear characters}, whenever $x\in \wt{\G}^F\mathcal{A}$ and $z\in \mathcal{K}$. Then, for every $F$-stable Levi subgroup $\L$ of $\G$, we obtain an action of $(\wt{\G}^F\mathcal{A})_\L\ltimes \mathcal{K}$ on the irreducible characters of $\wt{\L}^F$ and $\n_{\wt{\G}}(\L)^F$. We denote by $((\wt{\G}^F\mathcal{A})_\L\ltimes \mathcal{K})_{\wt{\lambda}}$ the stabiliser of $\wt{\lambda}\in\irr(\wt{\L}^F)$. In particular, it follows that $((\wt{\G}^F\mathcal{A})_\L\ltimes \mathcal{K})_{\wt{\lambda}}$ acts on the sets of characters $\wt{\mathcal{G}}$ and $\wt{\mathcal{L}}$. Next, we show that the bijection $\wt{\Omega}$ is compatible with this action.

\begin{lem}
\label{lem:Equivariant over Omega}
The bijection $\wt{\Omega}$ is $(\n_{\wt{\G}}(\L)^F(\wt{\G}^F\mathcal{A})_{(\L,\lambda)}\rtimes \mathcal{K})_{\wt{\lambda}}$-equivariant.
\end{lem}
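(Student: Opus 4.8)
The plan is to reduce the claimed $\bigl(\n_{\wt{\G}}(\L)^F(\wt{\G}^F\mathcal{A})_{(\L,\lambda)}\rtimes\mathcal{K}\bigr)_{\wt\lambda}$-equivariance of $\wt\Omega$ to two separate pieces: equivariance under the subgroup $\bigl(\n_{\wt{\G}}(\L)^F(\wt{\G}^F\mathcal{A})_{(\L,\lambda)}\bigr)_{\wt\lambda}$ coming from honest automorphisms, and equivariance under the $\mathcal{K}$-action given by multiplication with the linear characters $\wh z_{\wt\G}$. Since $\wt\Omega$ was defined in Proposition~\ref{prop:Bijection commuting diagram} via the commuting square with $\Omega=\Omega_{(\L,\lambda)}^\G$ and restriction maps, the natural strategy is to check each piece against that defining square. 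First I would treat the automorphism part: for $x\in\bigl(\n_{\wt{\G}}(\L)^F(\wt{\G}^F\mathcal{A})_{(\L,\lambda)}\bigr)_{\wt\lambda}$, note that $x$ stabilises $(\L,\lambda)$ (it stabilises $\wt\lambda$, hence its restriction $\lambda$), commutes with the restriction functors $\mathrm{Res}^{\wt\G^F}_{\G^F}$ and $\mathrm{Res}^{\n_{\wt\G}(\L)^F}_{\n_\G(\L)^F}$, and stabilises $\mathcal{G}$, $\mathcal{L}$, $\wt{\mathcal G}$, $\wt{\mathcal L}$; then the fact that $\Omega$ is $\aut_\mathbb{F}(\G^F)_{(\L,\lambda)}$-equivariant (Corollary~\ref{cor:Equivariant BMM, extended}) forces $\wt\Omega(\wt\chi^x)=\wt\Omega(\wt\chi)^x$, because both sides restrict to the same character of $\n_\G(\L)^F$ and $\wt\Omega$ is determined by that restriction together with lying over $\wt\lambda$ (the latter being preserved since $\wt\lambda^x=\wt\lambda$).

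Next I would handle the $\mathcal{K}$-action. Here the key computational input is the compatibility of Deligne--Lusztig induction with the linear characters $\wh z$, namely $\R_{\wt\L}^{\wt\G}(\mu\,\wh z_{\wt\L})=\R_{\wt\L}^{\wt\G}(\mu)\,\wh z_{\wt\G}$ (the analogue of \cite[(8.20)]{Cab-Eng04} in the regular-embedding setting), which shows that tensoring by $\wh z_{\wt\G}$ sends the series $\E(\wt\G^F,(\wt\L,\wt\lambda))$ to $\E(\wt\G^F,(\wt\L,\wt\lambda\,\wh z_{\wt\L}))$, and since we are inside the stabiliser $((\cdots)\rtimes\mathcal{K})_{\wt\lambda}$ we have $\wt\lambda\,\wh z_{\wt\L}=\wt\lambda$, so the series is preserved. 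On the local side, tensoring by $\wh z_{\n_{\wt\G}(\L)}$ visibly permutes $\irr(\n_{\wt\G}(\L)^F\mid\wt\lambda)$ and is trivial after restriction to $\n_\G(\L)^F$ (as $\wh z_{\wt\G}$ is trivial on $\G^F$, hence on $\n_\G(\L)^F$); likewise $\wh z_{\wt\G}$ is trivial on $\G^F$ so tensoring by it is invisible after $\mathrm{Res}^{\wt\G^F}_{\G^F}$. Therefore both $\wt\chi$ and $\wt\chi\,\wh z_{\wt\G}$ restrict to the same character of $\G^F$, their images $\wt\Omega(\wt\chi)$ and $\wt\Omega(\wt\chi\,\wh z_{\wt\G})$ restrict to the same character of $\n_\G(\L)^F$, both lie over $\wt\lambda$, and so they can differ only by a linear character of $\n_{\wt\G}(\L)^F/\n_\G(\L)^F$; identifying this Gallagher twist precisely with $\wh z_{\n_{\wt\G}(\L)}$ is exactly the content of the $\mathcal{K}$-equivariance, and it should follow by tracking $z$ through the construction of $\wt\psi$ from $\psi$ in the proof of Proposition~\ref{prop:Bijection commuting diagram} together with the isomorphism \eqref{central kernel and linear characters} and the compatibility $\wh z_{\wt\G}|_{\n_{\wt\G}(\L)^F}=\wh z_{\n_{\wt\G}(\L)}$, $\wh z_{\wt\G}|_{\wt\L^F}=\wh z_{\wt\L}$.

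Finally I would combine the two: since the full group $\bigl(\n_{\wt{\G}}(\L)^F(\wt{\G}^F\mathcal{A})_{(\L,\lambda)}\rtimes\mathcal{K}\bigr)_{\wt\lambda}$ is generated by the automorphism part and $\mathcal{K}$, and $\wt\Omega$ is a bijection, equivariance under each generating piece gives equivariance under the whole group — one just checks that the two pieces interact correctly, i.e.\ that the semidirect-product relation $z^x$ defined via \eqref{central kernel and linear characters} matches, which is automatic because both actions are defined through the same isomorphism $\mathcal{K}\cong\irr(\wt\G^F/\G^F)$ and restrict compatibly to $\wt\L^F$ and $\n_{\wt\G}(\L)^F$. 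I expect the main obstacle to be the $\mathcal{K}$-part: making sure that the Gallagher twist relating $\wt\Omega(\wt\chi\,\wh z_{\wt\G})$ and $\wt\Omega(\wt\chi)\,\wh z_{\n_{\wt\G}(\L)}$ is identified on the nose rather than merely up to \emph{some} linear character, which requires carefully unwinding the (somewhat implicit) definition of $\wt\Omega$ via the commuting square and the construction $\psi\mapsto\wt\psi$, and invoking the compatibility $(\wh z_\G)_{\L^F}=\wh z_\L$ (and its normaliser analogue) at the right moment. The equivariance bookkeeping from Lemma~\ref{lem:Frattini with inertial subgroups} will be used to guarantee that stabilisers match up so that ``lying over $\wt\lambda$'' is a condition preserved by every element of the acting group.
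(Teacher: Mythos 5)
Your plan to split the acting group into an ``honest automorphism'' piece and a $\mathcal{K}$-piece, prove equivariance for each, and then recombine, has a genuine gap at the recombination step. The stabiliser $\bigl(\n_{\wt{\G}}(\L)^F(\wt{\G}^F\mathcal{A})_{(\L,\lambda)}\rtimes \mathcal{K}\bigr)_{\wt{\lambda}}$ is \emph{not} generated by its intersections with the two factors $\n_{\wt{\G}}(\L)^F(\wt{\G}^F\mathcal{A})_{(\L,\lambda)}$ and $\mathcal{K}$: an element $(gx,z)$ stabilises $\wt{\lambda}$ precisely when $\wt{\lambda}^{gx}=\wt{\lambda}\,\wh{z}_{\wt{\L}}^{-1}$, and this joint condition does not imply that $(gx,1)$ or $(1,z)$ separately stabilise $\wt{\lambda}$. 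In particular, when you say ``since we are inside the stabiliser we have $\wt{\lambda}\,\wh{z}_{\wt{\L}}=\wt{\lambda}$'' you are implicitly assuming that $(1,z)$ alone is in the stabiliser, which is unjustified. Without this, the intermediate objects $\wt{\chi}^{(gx,1)}$ and $\wt{\chi}\,\wh{z}_{\wt{\G}}$ need not lie in $\wt{\mathcal{G}}$ at all, so $\wt{\Omega}$ cannot even be applied to them, and the step-by-step argument breaks down.

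The fix is simply to not decompose: handle a general element $(gx,z)$ of the stabiliser in one pass, as the paper does. The defining property of $\wt{\Omega}$ (from the commuting square in Proposition~\ref{prop:Bijection commuting diagram}) says $\wt{\Omega}(\wt{\alpha})=\wt{\beta}$ for $\wt{\alpha}\in\wt{\mathcal{G}}$, $\wt{\beta}\in\wt{\mathcal{L}}$ if and only if $\Omega(\wt{\alpha}_{\G^F})=\wt{\beta}_{\n_\G(\L)^F}$, because restriction is a bijection $\wt{\mathcal{L}}\to\mathcal{L}$. Then one observes that $(\wt{\chi}^{(gx,z)})_{\G^F}=\chi^x$ (the $g$-action is invisible since $\chi$ is $\wt{\G}^F$-invariant, and $\wh{z}_{\wt{\G}}$ is trivial on $\G^F$) and likewise $(\wt{\psi}^{(gx,z)})_{\n_\G(\L)^F}=\psi^x$ (using Corollary~\ref{cor:Invariant characters over unipotent} for the $\n_{\wt\G}(\L)^F$-invariance of $\psi$), after which the $\aut_\mathbb{F}(\G^F)_{(\L,\lambda)}$-equivariance of $\Omega$ from Corollary~\ref{cor:Equivariant BMM, extended} finishes the proof. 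A secondary remark: your discussion of ``identifying the Gallagher twist'' in the $\mathcal{K}$-part is more labour than needed --- once you know both candidate images lie in $\wt{\mathcal{L}}$ and restrict to the same element of $\mathcal{L}$, they are \emph{equal}, by the bijectivity of restriction; there is no residual Gallagher ambiguity to pin down. The observations in your two separate paragraphs are essentially the right ingredients, but they must be applied to an arbitrary $(gx,z)$ simultaneously rather than to $(gx,1)$ and $(1,z)$ separately.
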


\begin{proof}
Let $\wt{\chi}\in\wt{\mathcal{G}}$ and $\wt{\psi}\in\wt{\mathcal{L}}$. By the definition of $\wt{\Omega}$, we have $\wt{\Omega}(\wt{\chi})=\wt{\psi}$ if and only if $\Omega(\chi)=\psi$ where $\chi:=\wt{\chi}_{\G^F}$ and $\psi:=\wt{\psi}_{\n_\G(\L)^F}$. Now, if we consider $g\in\n_{\wt{\G}}(\L)^F$, $x\in (\wt{\G}^F\mathcal{A})_{(\L,\lambda)}$ and $z\in\mathcal{K}$ such that $(gx,z)$ stabilises ${\wt{\lambda}}$, then we obtain
\[\wt{\Omega}\left(\wt{\chi}^{(gx,z)}\right)=\wt{\psi}^{(gx,z)}\]
if and only if
\begin{equation}
\label{eq:Equivariant over Omega, 1}
\Omega\left(\left(\wt{\chi}^{(gx,z)}\right)_{\G^F}\right)=\left(\wt{\psi}^{(gx,z)}\right)_{\n_\G(\L)^F}.
\end{equation}
However, since the restriction of $\wt{\chi}^{(gx,z)}$ to $\G^F$ coincides with $\chi^x$ and the restriction of $\wt{\psi}^{(gx,z)}$ to $\n_\G(\L)^F$ coincides with $\psi^x$, we deduce that the equality in \eqref{eq:Equivariant over Omega, 1} holds by the equivariant properties of $\Omega$ as described in Corollary \ref{cor:Equivariant BMM, extended}.
\end{proof}

One of the main ingredients for the construction of the projective representations needed to obtain $\G^F$-block isomorphisms of character triples is given by the following two lemmas on maximal extendibility.

\begin{lem}
\label{lem:Global extendibility in types A and C}
Maximal extendibility holds for $\mathcal{G}$ with respect to the inclusion $\G^F\unlhd \G^F\mathcal{A}$, that is, every character $\chi\in\mathcal{G}$ extends to $\G^F\mathcal{A}_\chi$.
\end{lem}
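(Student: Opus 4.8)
The plan is to reduce the statement to the known extendibility results in the literature. Recall that $\mathcal{G}=\E(\G^F,(\L,\lambda))$ and that, by Proposition~\ref{prop:Equivariant BMM}, every character $\chi\in\mathcal{G}$ is of the form $\chi=I_{(\L,\lambda)}^\G(\eta)$ for a unique $\eta\in\irr(W_\G(\L,\lambda)^F)$, with $I_{(\L,\lambda)}^\G$ being $\aut_\mathbb{F}(\G^F)_{(\L,\lambda)}$-equivariant. First I would fix $\chi\in\mathcal{G}$ and consider the stabiliser $\G^F\mathcal{A}_\chi$; using that $\mathcal{A}$ permutes unipotent $e$-cuspidal pairs and that, by \cite[Theorem 3.2 (1)]{Bro-Mal-Mic93}, two such pairs are $\G^F\mathcal{A}_\chi$-conjugate whenever they are $\G^F\mathcal{A}$-conjugate, I can (replacing $\chi$ by a $\G^F$-conjugate) reduce to the situation where $\mathcal{A}_\chi$ stabilises the pair $(\L,\lambda)$, so that $\G^F\mathcal{A}_\chi=\G^F\mathcal{A}_{(\L,\lambda),\chi}$ and correspondingly $\mathcal{A}_{(\L,\lambda),\chi}$ stabilises $\eta$ under the equivariant bijection.

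Next I would invoke the extendibility of $\lambda$ from Lemma~\ref{lem:Extension to the graph and field locally}: there is an extension $\lambda^\diamond$ of $\lambda$ to $\n_\G(\L)_\lambda^F$ that is $(\G^F\mathcal{A})_{(\L,\lambda)}$-invariant. Combined with the reformulation at the start of Section~\ref{sec:Compatibility with isomorphisms of character triples}, this identifies $\chi$ with $(\lambda^\diamond\eta)^{\n_\G(\L)^F}$ via the Clifford correspondence and Gallagher's theorem. To extend $\chi$ to $\G^F\mathcal{A}_\chi$ it then suffices to extend $\eta$, viewed as a character of the relative Weyl group $W_\G(\L,\lambda)^F$, compatibly to the corresponding relative Weyl group inside $\G^F\mathcal{A}_\chi$; this is because $\lambda^\diamond$ is already $\mathcal{A}_{(\L,\lambda)}$-invariant, so a $\mathcal{A}_{(\L,\lambda),\chi}$-stable extension of $\eta$ yields, by multiplying with $\lambda^\diamond$ and inducing, the desired extension of $\chi$. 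Concretely, this is precisely the content of the extendibility statements established for types $\bf{A}$ and $\bf{C}$ in \cite[Theorem 4.3]{Bro-Spa20} and \cite{Bro-Ruh23}; one feeds in the input of those papers — the existence of suitable extensions of unipotent $e$-cuspidal characters and of the associated relative Weyl group characters — to conclude.

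The main obstacle is the extendibility of the relative Weyl group characters $\eta$ to the larger relative Weyl group obtained from $\G^F\mathcal{A}_\chi$, which need not be split in general and where one genuinely needs the structural analysis of the relevant groups (the relative Weyl groups are complex reflection groups of types $G(d,1,a)$ or the like, whose second cohomology is known to vanish in the cases at hand). This is exactly where the deep case-by-case results of \cite{Bro-Spa20} and \cite{Bro-Ruh23} — originally developed for the inductive McKay and Alperin--McKay conditions — are unavoidable, and the lemma is in effect a packaging of those theorems in the language of maximal extendibility. I would therefore present the proof as: reduce to the stabiliser of $(\L,\lambda)$; apply Lemma~\ref{lem:Extension to the graph and field locally} together with the Clifford/Gallagher reformulation to transfer the problem to $W_\G(\L,\lambda)^F$; and cite \cite[Theorem 4.3 (i)]{Bro-Spa20}, \cite[Theorem 1.2]{Bro22} and \cite{Bro-Ruh23} for the extendibility of the relevant Weyl group characters, which completes the argument.
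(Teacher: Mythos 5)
Your proposal does not match the paper's argument and contains a genuine conceptual gap. The lemma is a \emph{global} extendibility statement: $\chi\in\mathcal{G}=\E(\G^F,(\L,\lambda))$ is a character of $\G^F$, and the claim is that $\chi$ extends to $\G^F\mathcal{A}_\chi$. Your reduction, however, transfers the problem across the bijection $\Omega^\G_{(\L,\lambda)}$ to the character $\psi=(\lambda^\diamond\eta)^{\n_\G(\L)^F}$ of the \emph{local} group $\n_\G(\L)^F$, and then argues that extending $\eta$ (hence $\psi$) suffices. But the bijection between $\E(\G^F,(\L,\lambda))$ and $\irr(\n_\G(\L)^F\mid\lambda)$ does not transport extendibility from one side to the other: extending $\chi$ over $\G^F\unlhd\G^F\mathcal{A}_\chi$ and extending $\psi$ over $\n_\G(\L)^F\unlhd(\G^F\mathcal{A})_{\L,\psi}$ are separate problems, and the paper in fact needs \emph{both} as independent inputs (this lemma for the global side and Lemma~\ref{lem:Local extendibility in types A and C} for the local side) before it can construct compatible projective representations in Proposition~\ref{prop:Parametrisation, central isomorphisms}. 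Deducing one from the other would be circular.

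Consequently your citations are also off target: \cite{Bro-Spa20}, \cite{Bro22} and \cite{Bro-Ruh23} establish extendibility of unipotent $e$-cuspidal characters and of characters of local subgroups (they feed into Lemma~\ref{lem:Extension to the graph and field locally} and Lemma~\ref{lem:Local extendibility in types A and C}), not the extendibility of unipotent characters of $\G^F$ itself. The paper's actual proof is much shorter and bypasses the parametrisation entirely: for types $\bf{B}$ and $\bf{C}$ the group $\mathcal{A}$ is cyclic (no graph automorphisms occur here), so the quotient $\G^F\mathcal{A}_\chi/\G^F$ is cyclic and every $\chi$ extends by \cite[Corollary 11.22]{Isa76}; for type $\bf{A}$ it cites the global extendibility result of Cabanes--Sp\"ath \cite[Theorem 4.1]{Cab-Spa17I} (see also \cite[Theorem 2.4]{Mal08}). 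You should discard the reduction to relative Weyl groups for this lemma and instead notice the cyclicity of $\mathcal{A}$ outside type $\bf{A}$, then invoke the correct global reference for type $\bf{A}$.
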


\begin{proof}
If $\G$ is of type $\bf{B}$ or $\bf{C}$ then the result follows from \cite[Corollary 11.22]{Isa76} since $\mathcal{A}$ is cyclic. Then, we can assume that $\G$ is of type $\bf{A}$ in which case the result follows from \cite[Theorem 4.1]{Cab-Spa17I} (see also \cite[Theorem 2.4]{Mal08}).
\end{proof}

The local version of the lemma above is a consequence of the results obtained in \cite{Bro-Spa20}.

\begin{lem}
\label{lem:Local extendibility in types A and C}
Maximal extendibility holds for $\mathcal{L}$ with respect to the inclusion $\n_\G(\L)^F\unlhd (\G^F\mathcal{A})_\L$, that is, every character $\psi\in\mathcal{L}$ extends to $(\G^F\mathcal{A})_{\L,\psi}$.
\end{lem}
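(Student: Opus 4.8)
The plan is to dispose of the types $\mathbf{B}$ and $\mathbf{C}$ by a short cyclicity argument and to reduce the genuinely difficult type $\mathbf{A}$ case to the extendibility results of \cite{Bro-Spa20}. Suppose first that $\G$ is of type $\mathbf{B}$ or $\mathbf{C}$. Then the Dynkin diagram of $\G$ has no nontrivial symmetry, so $\mathcal{A}=\langle F_0\rangle$ is cyclic, as already used in the proof of Lemma \ref{lem:Global extendibility in types A and C}. The canonical projection $(\G^F\mathcal{A})_\L\to\mathcal{A}=\G^F\mathcal{A}/\G^F$ has kernel $(\G^F\mathcal{A})_\L\cap\G^F=\n_\G(\L)^F$, so $(\G^F\mathcal{A})_\L/\n_\G(\L)^F$, and hence its subgroup $(\G^F\mathcal{A})_{\L,\psi}/\n_\G(\L)^F$, is cyclic. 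Since $\psi$ is $(\G^F\mathcal{A})_{\L,\psi}$-invariant by definition, \cite[Corollary 11.22]{Isa76} shows that $\psi$ extends to $(\G^F\mathcal{A})_{\L,\psi}$.

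From now on assume that $\G$ is of type $\mathbf{A}$, so that $\mathcal{A}$ is generated by the graph automorphism together with the field automorphisms and is in general not cyclic. Write $T:=(\G^F\mathcal{A})_{\L,\psi}$. The idea is to transport the extension problem to the relative Weyl group via the invariant extension of $\lambda$ furnished by Lemma \ref{lem:Extension to the graph and field locally}: let $\lambda^\diamond$ be a $(\G^F\mathcal{A})_{(\L,\lambda)}$-invariant extension of $\lambda$ to $\n_\G(\L)^F_\lambda$. By Gallagher's theorem \cite[Corollary 6.17]{Isa76} and the Clifford correspondence \cite[Theorem 6.11]{Isa76}, we have $\psi=(\psi_0)^{\n_\G(\L)^F}$ with $\psi_0:=\lambda^\diamond\eta$ for a unique $\eta\in\irr(W_\G(\L,\lambda)^F)$, and $\psi_0$ is the Clifford correspondent of $\psi$ over $\lambda$. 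Arguing as in the proof of Lemma \ref{lem:Frattini with inertial subgroups} --- for $x\in T$ the characters $\lambda$ and $\lambda^x$ are constituents of the restriction $\psi_{\L^F}$, hence $\n_\G(\L)^F$-conjugate by Clifford's theorem, so after modifying $x$ by an element of $\n_\G(\L)^F$ we may assume it stabilises $\lambda$, and then also $\psi_0$ --- one obtains $T=\n_\G(\L)^F\,U$, where $U:=(\G^F\mathcal{A})_{(\L,\lambda),\psi_0}=(\G^F\mathcal{A})_{(\L,\lambda),\eta}$ (the last equality because $\lambda^\diamond$ is invariant), together with $\n_\G(\L)^F\cap U=\n_\G(\L)^F_\lambda$. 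Consequently, if $\wh{\psi_0}\in\irr(U)$ is any extension of $\psi_0$, then $U\backslash T/\n_\G(\L)^F$ consists of a single double coset, so Mackey's formula yields $\bigl((\wh{\psi_0})^{T}\bigr)_{\n_\G(\L)^F}=(\psi_0)^{\n_\G(\L)^F}=\psi$; in particular $(\wh{\psi_0})^{T}$ is an irreducible extension of $\psi$ to $T$. It therefore remains to extend $\psi_0=\lambda^\diamond\eta$ to $U=(\G^F\mathcal{A})_{(\L,\lambda),\eta}$, and this is precisely the type-$\mathbf{A}$ extendibility statement provided by \cite{Bro-Spa20}, which rests on the explicit description of the relative Weyl groups $W_\G(\L,\lambda)^F$ and of the action of the graph automorphism on their irreducible characters.

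The true obstacle is concentrated in this last input. Once $\mathcal{A}$ fails to be cyclic, the obstruction to extending $\psi_0$ lives in the Schur multiplier of a non-cyclic quotient built from the graph and field automorphisms, and establishing its vanishing --- equivalently, producing mutually compatible extensions --- is exactly the work carried out in \cite{Bro-Spa20}. By contrast, the Clifford-theoretic reduction above is routine; it only has to be performed by hand through the single double coset, because the inertia group $\n_\G(\L)^F_\lambda$ need not be normal in $\n_\G(\L)^F$, so that the usual ``going up'' lemmas for character triples are not directly available.
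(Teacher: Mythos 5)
Your proof is correct, and for types $\bf{B}$ and $\bf{C}$ it coincides with the paper's: the quotient $(\G^F\mathcal{A})_{\L,\psi}/\n_\G(\L)^F$ is cyclic because $\mathcal{A}$ is, and \cite[Corollary 11.22]{Isa76} applies. The difference is in type $\bf{A}$, where the paper simply says the result follows from \cite[Theorem 1.2]{Bro-Spa20}, whereas you insert a nontrivial intermediate reduction.

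Your reduction — showing $T=\n_\G(\L)^F U$ with $U\cap\n_\G(\L)^F=\n_\G(\L)^F_\lambda$, then inducing an extension of the Clifford correspondent $\psi_0$ from $U$ to $T$ via the single-coset Mackey formula — is carried out correctly and is a clean instance of ``Clifford correspondence commutes with extendibility.'' You are also right that the usual going-up lemmas for character triples do not apply verbatim because $\n_\G(\L)^F_\lambda$ need not be normal in $\n_\G(\L)^F$, so doing it by hand is justified. The identification $U=(\G^F\mathcal{A})_{(\L,\lambda),\eta}$ via Gallagher and the $(\G^F\mathcal{A})_{(\L,\lambda)}$-invariance of $\lambda^\diamond$ from Lemma \ref{lem:Extension to the graph and field locally} is also fine. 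What to watch, however, is the attribution at the end: the paper's citation of \cite[Theorem 1.2]{Bro-Spa20} is made directly for the statement that $\psi\in\irr(\n_\G(\L)^F\mid\lambda)$ extends to $(\G^F\mathcal{A})_{\L,\psi}$, so the theorem very likely already produces the extension at the level of $\psi$, making your detour through $\psi_0$ redundant rather than a genuine prerequisite for invoking that reference. Your proof remains logically sound either way (since ``$\psi_0$ extends to $U$'' implies ``$\psi$ extends to $T$''), but you should double-check the exact formulation of \cite[Theorem 1.2]{Bro-Spa20} before claiming that extending $\psi_0$ to $U$ ``is precisely'' its statement: you may be reproving as intermediate work something the reference already packages, or, conversely, citing a theorem for a formulation it does not literally contain.
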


\begin{proof}
As in the proof of Lemma \ref{lem:Global extendibility in types A and C}, it is enough to prove the result in the case where $\G$ is of type $\bf{A}$. In fact, if $\G$ is of type $\bf{B}$ or $\bf{C}$, then the quotient $(\G^F\mathcal{A})_{(\L,\psi)}/\n_\G(\L)^F$ is cyclic because it it a subquotient of $\mathcal{A}$. Now, if $\G$ is of type $\bf{A}$ the result follows from \cite[Theorem 1.2]{Bro-Spa20}.
\end{proof}

Finally, we can start constructing isomorphisms of character triples for the bijection $\Omega$. As a first step, we obtain a weaker isomorphism, know as \textit{$\G^F$-central isomorphism of character triples} and denoted by $\isoc{\G^F}$, whose requirements are given by \cite[Remark 3.7 (i)-(iii)]{Spa17} and replacing the condition on defect groups by imposing that $\c_G(N)\leq H_1\cap H_2$ with the notations used there. We refer the reader to \cite[Definition 3.3.4]{Ros-Thesis} for a precise definition.

\begin{prop}
\label{prop:Parametrisation, central isomorphisms}
For every $\chi\in\mathcal{G}$ and $\psi:=\Omega(\chi)\in\mathcal{psi}$ we have
\[\left(\wt{\G}^F\mathcal{A}_\chi,\G^F,\chi\right)\isoc{\G^F}\left((\wt{\G}^F\mathcal{A})_{\L,\psi},\n_\G(\L)^F,\psi\right).\]
\end{prop}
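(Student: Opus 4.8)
The plan is to build, for each $\chi\in\mathcal{G}$, a pair of projective representations with compatible factor sets, one for the triple $(\wt{\G}^F\mathcal{A}_\chi,\G^F,\chi)$ and one for $((\wt{\G}^F\mathcal{A})_{\L,\psi},\n_\G(\L)^F,\psi)$, and then verify the conditions listed in \cite[Remark 3.7 (i)--(iii)]{Spa17} together with the centraliser condition $\c_G(N)\leq H_1\cap H_2$ that defines $\isoc{\G^F}$. The group-theoretic hypotheses needed to even set up such a $\G^F$-central isomorphism---namely the coincidence of inertia groups and the Frattini-type factorisation---have already been arranged in Lemma \ref{lem:Frattini with inertial subgroups}, which gives $(\wt{\G}^F\mathcal{A})_{\L,\chi}=(\wt{\G}^F\mathcal{A})_{\L,\psi}$ and $\wt{\G}^F\mathcal{A}_\chi=\G^F(\wt{\G}^F\mathcal{A})_{\L,\psi}$; this last equality is what makes the two triples ``the same up to $\G^F$'' in the appropriate sense.

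First I would fix the extension $\lambda^\diamond$ of $\lambda$ to $\n_\G(\L)^F_\lambda$ supplied by Lemma \ref{lem:Extension to the graph and field locally}, which is $(\wt{\G}^F\mathcal{A})_{(\L,\lambda)}$-invariant, and use it to pin down $\Omega=\Omega^\G_{(\L,\lambda)}$ as in Corollary \ref{cor:Equivariant BMM, extended}, so that $\psi=(\lambda^\diamond\eta)^{\n_\G(\L)^F}$ with $\chi=I^\G_{(\L,\lambda)}(\eta)$. On the global side, Lemma \ref{lem:Global extendibility in types A and C} gives an extension of $\chi$ to $\G^F\mathcal{A}_\chi$; passing to $\wt{\G}^F\mathcal{A}_\chi$ is then handled by the regular-embedding machinery, since $\chi$ is $\wt{\G}^F$-invariant and $\wt{\G}^F\mathcal{A}_\chi/\G^F\mathcal{A}_\chi$ is abelian, so one obtains a projective representation of $\wt{\G}^F\mathcal{A}_\chi$ associated to $\chi$ whose factor set is inflated from the quotient $\wt{\G}^F\mathcal{A}_\chi/\G^F$. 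On the local side, Lemma \ref{lem:Local extendibility in types A and C} gives an extension of $\psi$ to $(\G^F\mathcal{A})_{\L,\psi}$, and Lemma \ref{lem:Extension of local unipotent characters to diagonal} (via Corollary \ref{cor:Invariant characters over unipotent}) lets one extend further across the diagonal to $\n_{\wt{\G}}(\L)^F$, producing a projective representation of $(\wt{\G}^F\mathcal{A})_{\L,\psi}$ for $\psi$. The equivariance of $\wt{\Omega}$ recorded in Lemma \ref{lem:Equivariant over Omega}, together with the commuting diagram of Proposition \ref{prop:Bijection commuting diagram}, is exactly what is needed to arrange that the two factor sets agree after identification of $\wt{\G}^F\mathcal{A}_\chi/\G^F$ with $(\wt{\G}^F\mathcal{A})_{\L,\psi}/\n_\G(\L)^F$ (these quotients are isomorphic because $\wt{\G}^F\mathcal{A}_\chi=\G^F(\wt{\G}^F\mathcal{A})_{\L,\psi}$ and $\G^F\cap(\wt{\G}^F\mathcal{A})_{\L,\psi}=\n_\G(\L)^F_\psi$, using Lemma \ref{lem:Frattini with inertial subgroups}).

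The centraliser condition $\c_{\G^F}(\G^F)=\z(\G^F)$ is trivially contained in both $\G^F$ and $\n_\G(\L)^F$ since $\G$ is simple, so that part is immediate. The remaining verification is that the two projective representations, restricted to the central $\ell'$-part and compared on common subgroups, match up in the strong sense demanded by $\isoc{\G^F}$: this is where I expect the real work to be. Concretely, one must check that the linear character through which $\z(\G^F)$ (or more precisely the relevant central subgroup) acts is the same on both sides---this follows from $\psi$ lying above $\lambda$ and $\chi$ lying in $\E(\G^F,(\L,\lambda))$, so central characters are governed by $\lambda^F$ on $\z(\G^F)\leq\L^F$---and that the factor sets literally coincide, not merely up to cohomology.

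The main obstacle, then, is the bookkeeping required to show the two factor sets are equal: one has to trace the extension $\wt\lambda^\diamond$ of $\lambda^\diamond$ through Proposition \ref{prop:Bijection commuting diagram}, use the $\mathcal{K}$-equivariance from Lemma \ref{lem:Equivariant over Omega} to handle the ambiguity coming from the kernel $\ker(i^*)^F\cong\irr(\wt{\G}^F/\G^F)$, and confirm that the scalar discrepancies between the chosen global and local extensions are controlled by the \emph{same} character of $\wt{\G}^F/\G^F$ on both sides. I would structure this as: (1) set up $\mathcal P$ and $\mathcal P'$ explicitly from $\wt\lambda$, $\lambda^\diamond$, $\wt\lambda^\diamond$ and the extensions of $\chi$, $\psi$; (2) compute both factor sets as inflations from $\wt{\G}^F\mathcal{A}_\chi/\G^F\cong(\wt{\G}^F\mathcal{A})_{\L,\psi}/\n_\G(\L)^F$; (3) invoke $\wt\Omega$'s $\mathcal{K}$-equivariance and the commuting square to see the inflated cocycles agree; (4) check the central-character condition and $\c_G(N)\leq H_1\cap H_2$; (5) conclude $\isoc{\G^F}$. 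The later upgrade from $\isoc{\G^F}$ to the full $\iso{\G^F}$ in Theorem \ref{thm:Main Parametrisation for simple groups} will additionally require the block-theoretic compatibility, which is presumably why only the central version is claimed at this stage.
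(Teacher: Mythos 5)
Your proposal follows essentially the same strategy as the paper: build projective representations by tensoring the restriction of an extension from $\wt{\G}^F$ (resp.\ $\n_{\wt{\G}}(\L)^F$) with an extension along $\G^F\mathcal{A}_\chi$ (resp.\ $(\G^F\mathcal{A})_{\L,\psi}$) via \cite[Lemma 2.11]{Spa12}, compare factor sets by reducing to a Gallagher-type identity $(\mu_x^{\rm glo})_{\n_{\wt{\G}}(\L)^F}=\mu_x^{\rm loc}$ handled through Lemma \ref{lem:Equivariant over Omega} and \cite[Theorem 3.2 (1)]{Bro-Mal-Mic93}, and then check the scalar functions on the centraliser.

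One small but substantive correction: the centraliser relevant to $\isoc{\G^F}$ is $\c_{\wt{\G}^F\mathcal{A}_\chi}(\G^F)=\z(\wt{\G}^F)$, not $\c_{\G^F}(\G^F)=\z(\G^F)$; the containment $\z(\wt{\G}^F)\leq H_1\cap H_2$ is indeed easy, but the scalar-function agreement must be checked on $\z(\wt{\G}^F)$. You gesture at this (``or more precisely the relevant central subgroup'') but the argument you sketch --- that central characters are ``governed by $\lambda$ on $\z(\G^F)\leq\L^F$'' --- doesn't quite reach $\z(\wt{\G}^F)$. The paper's key observation, which makes this step short rather than ``the real work,'' is simply that $\wt{\chi}$ and $\wt{\lambda}$ are \emph{unipotent} and therefore contain $\z(\wt{\G}^F)$ in their kernels: both $\wt{\chi}_{\z(\wt{\G}^F)}$ and $\wt{\psi}_{\z(\wt{\G}^F)}$ are multiples of the trivial character, so the two scalar functions agree there. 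With that observation in place the factor-set comparison is, as you correctly identify at the end, where the actual bookkeeping lives.
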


\begin{proof}
We start by constructing projective representations associated with $\chi$ and $\psi$. According to Proposition \ref{prop:Bijection commuting diagram} we can find a unipotent extension $\wt{\chi}\in\wt{\mathcal{G}}$ of $\chi$ to $\wt{\G}^F$. Furthermore, by Lemma \ref{lem:Global extendibility in types A and C} there exists an extension $\chi'$ of $\chi$ to $\G^F\mathcal{A}_\chi$. Let $\wt{\mathcal{D}}_{\rm glo}$ be a representation of $\wt{\G}^F$ affording $\wt{\chi}$ and $\mathcal{D}'_{\rm glo}$ a representation of $\G^F\mathcal{A}_\chi$ affording $\chi'$. Now, \cite[Lemma 2.11]{Spa12} implies that
\[\Pr_{\rm glo}:\left(\wt{\G}^F\mathcal{A}\right)_\chi\to \GL_{\chi(1)}(\mathbb{C})\]
defined by $\Pr_{\rm glo}(x_1x_2):=\wt{\mathcal{D}}_{\rm glo}(x_1)\mathcal{D}'_{\rm glo}(x_2)$ for every $x_1\in\wt{\G}^F$ and $x_2\in \G^F\mathcal{A}_\chi$ is a projective representation associated with $\chi$. Next, observe that $\wt{\psi}:=\wt{\Omega}(\wt{\chi})\in\wt{\mathcal{L}}$ is an extension of $\psi$ to $\n_{\wt{\G}}(\L)^F$ and consider an extension $\psi'$ of $\psi$ to $(\G^F\mathcal{A})_{\L,\psi}$ given by Lemma \ref{lem:Local extendibility in types A and C}. Let $\wt{\mathcal{D}}_{\rm loc}$ be a representation of $\n_{\wt{\G}}(\L)^F$ affording $\wt{\psi}$ and $\mathcal{D}'_{\rm loc}$ a representation of $(\G^F\mathcal{A})_{\L,\psi}$ affording $\psi'$. Once again, \cite[Lemma 2.11]{Spa12} shows that the map
\[\Pr_{\rm loc}:\left(\wt{\G}^F\mathcal{A}\right)_{\L,\psi}\to \GL_{\psi(1)}(\mathbb{C})\]
given by $\Pr_{\rm loc}(x_1x_2):=\wt{\mathcal{D}}_{\rm loc}(x_1)\mathcal{D}'_{\rm loc}(x_2)$ for every $x_1\in\n_{\wt{\G}}(\L)^F$ and $x_2\in (\G^F\mathcal{A})_{\L,\psi}$ is a projective representation associated with $\psi$. We denote by $\alpha_{\rm glo}$ and $\alpha_{\rm loc}$ the factor set of $\Pr_{\rm glo}$ and $\Pr_{\rm loc}$ respectively. As explained in the proof of \cite[Theorem 4.3]{Ros-Clifford_automorphisms_HC}, in order to prove that $\alpha_{\rm glo}$ coincides with $\alpha_{\rm loc}$ via the isomorphism $\wt{\G}^F\mathcal{A}_\chi/\G^F\simeq (\wt{\G}^F\mathcal{A})_{\L,\psi}/\n_\G(\L)^F$, it suffices to show that
\begin{equation}
\label{eq:Parametrisation, central isomorphisms, 1}
(\mu_x^{\rm glo})_{\n_{\wt{\G}}(\L)^F}=\mu_x^{\rm loc}
\end{equation}
for every $x\in (\G^F\mathcal{A})_{\L,\chi}$ and where $\mu_x^{\rm glo}\in\irr(\wt{\G}^F/\G^F)$ and $\mu_x^{\rm loc}\in\irr(\n_{\wt{\G}}(\L)^F/\n_\G(\L)^F)$ are determined by Gallagher's theorem (see \cite[Corollary 6.17]{Isa76}) via the equalities $\wt{\chi}=\mu_x^{\rm glo}\wt{\chi}^x$ and $\wt{\psi}=\mu_x^{\rm loc}\wt{\psi}^x$ respectively. Because $(\G^F\mathcal{A})_{\L,\chi}=\n_\G(\L)^F(\G^F\mathcal{A})_{(\L,\lambda),\chi}$, we may assume that $x$ stabilises $\lambda$. Let $z\in\mathcal{K}$ such that $\mu_x^{\rm glo}=\hat{z}_{\wt{\G}}$ and observe that $(x,z)$ is an element of $(\G^F\mathcal{A})_{(\L,\lambda),\chi}\rtimes \mathcal{K}$ that stabilises $\wt{\chi}$. Then, applying \cite[Theorem 3.2 (1)]{Bro-Mal-Mic93}, we deduce that $\wt{\lambda}$ and $\wt{\lambda}^{(x,z)}$ are $\n_{\wt{\G}}(\L)^F$-conjugate and we may choose $g\in\n_{\wt{\G}}(\L)^F$ such that $\wt{\lambda}=(\wt{\lambda}^{(x,z)})^g=\wt{\lambda}^{(xg,z)}$. In other words
\[(xg,z)\in\left(\n_{\wt{\G}}(\L)^F(\wt{\G}^F\mathcal{A})_{(\L,\lambda)}\rtimes \mathcal{K}\right)_{\wt{\lambda}}\]
and thus Lemma \ref{lem:Equivariant over Omega} implies that the equality $\wt{\chi}=\wt{\chi}^{(xg,z)}$ holds if and only if $\wt{\psi}=\wt{\psi}^{(xg,z)}$. From this, we immediately deduce the equality required in \eqref{eq:Parametrisation, central isomorphisms, 1}.

Next, denote by $\zeta_{\rm glo}$ and $\zeta_{\rm loc }$ the scalar functions associated to $\Pr_{\rm glo}$ and $\Pr_{\rm loc}$ respectively. To conclude the proof, it remains to show that $\zeta_{\rm glo}$ and $\zeta_{\rm loc}$ coincide on $\c_{(\wt{\G}^F\mathcal{A})_{\chi}}(\G^F)=\z(\wt{\G}^F)$. As in the proof of \cite[Theorem 4.3]{Ros-Clifford_automorphisms_HC}, it is enough to show that the restrictions of $\wt{\chi}$ and $\wt{\psi}$ to $\z(\wt{\G}^F)$ are multiples of a common irreducible constituent. This follows from the fact that unipotent characters contain the center in their kernel. In fact, on one hand, $1_{\z(\wt{\G}^F)}$ is the unique irreducible constituent of $\wt{\chi}_{\z(\wt{\G}^F)}$ because $\wt{\chi}$ is unipotent. On the other hand, $\wt{\psi}$ lies above $\wt{\lambda}$ and, since $\z(\wt{\G}^F)\leq \z(\wt{\L}^F)$ and $\wt{\lambda}$ is unipotent, we deduce that $1_{\z(\wt{\G}^F)}$ is the unique irreducible constituent of $\wt{\psi}_{\z(\wt{\G}^F)}$. This completes the proof.
\end{proof}

We conclude this section by verifying the remaining condition \cite[Remark 3.7 (iv)]{Spa17} and obtain the required $\G^F$-block isomorphisms of character triples for the map $\Omega$.

\begin{prop}
\label{prop:Parametrisation, block isomorphism}
For every $\chi\in\mathcal{G}$ and $\psi:=\Omega(\chi)\in\mathcal{psi}$ we have
\[\left(\wt{\G}^F\mathcal{A}_\chi,\G^F,\chi\right)\iso{\G^F}\left((\wt{\G}^F\mathcal{A})_{\L,\psi},\n_\G(\L)^F,\psi\right).\]
\end{prop}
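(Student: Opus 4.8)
The statement upgrades the $\G^F$-central isomorphism of Proposition \ref{prop:Parametrisation, central isomorphisms} to a full $\G^F$-block isomorphism $\iso{\G^F}$, so the only thing left to verify is the block-theoretic compatibility \cite[Remark 3.7 (iv)]{Spa17}: for every subgroup $J$ with $\G^F\leq J\leq \wt{\G}^F\mathcal{A}_\chi$ and its local counterpart $J_\L:=J\cap(\wt{\G}^F\mathcal{A})_{\L,\psi}$ (these correspond under the isomorphism $\wt{\G}^F\mathcal{A}_\chi/\G^F\simeq(\wt{\G}^F\mathcal{A})_{\L,\psi}/\n_\G(\L)^F$), the characters $\wt\chi_J\in\irr(J\mid\chi)$ and the corresponding $\wt\psi_{J_\L}\in\irr(J_\L\mid\psi)$ matched by the central isomorphism must satisfy $\bl(\wt\chi_J)=\bl(\wt\psi_{J_\L})^J$, i.e.\ Brauer induction sends the local block to the global one. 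The natural strategy is to reduce everything to the Cabanes--Enguehard block theory recalled in the excerpt, namely that $\bl(\lambda)^{\G^F}=b_{\G^F}(\L,\lambda)$ and, more relevantly, that $b_{\G^F}(\L,\lambda)$ is determined by the unipotent $e$-cuspidal pair via the subpair $(\z(\L)_\ell^F,\bl(\lambda))$.

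\emph{Step 1: reduce to the defining $e$-cuspidal data.} First I would note that by Lemma \ref{lem:Block induction and regularity} every character of $J_\L$ lying above $\psi$ (hence above $\lambda$) has an $\L^F$-regular block, so $\bl(\wt\psi_{J_\L})^J$ is defined and is the unique block of $J$ covering $\bl(\wt\psi_{J_\L})$; on the other side $\bl(\wt\chi_J)$ covers $\bl(\chi)$, and $\bl(\chi)$ is the unipotent block $b_{\G^F}(\L,\lambda)$ by Cabanes--Enguehard. So the claim $\bl(\wt\chi_J)=\bl(\wt\psi_{J_\L})^J$ will follow once I show that these two blocks of $J$ both lie over the ``same'' data, which after restriction to $\G^F$ and $\n_\G(\L)^F$ respectively amounts to the already-known statement $\bl(\lambda)^{\G^F}=\bl(\chi)$ together with control of the covering blocks in the intermediate group. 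The technical heart is therefore a Butterfly/transitivity argument for Brauer induction along the chain $\n_\G(\L)^F\leq J_\L$ and $\G^F\leq J$.

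\emph{Step 2: transport the block relation through the central isomorphism.} The cleanest route is to invoke the general machinery that turns a $\G^F$-central isomorphism into a $\G^F$-block isomorphism once a single block-induction identity holds at the bottom: this is exactly the mechanism used in \cite[Theorem 4.3]{Ros-Clifford_automorphisms_HC} and \cite[Section 4]{Ros-Generalized_HC_theory_for_Dade}. Concretely, one uses that the projective representations $\Pr_{\rm glo}$ and $\Pr_{\rm loc}$ of Proposition \ref{prop:Parametrisation, central isomorphisms} were built from the \emph{unipotent} extension $\wt\chi$ and the extension $\wt\psi=\wt\Omega(\wt\chi)$, and that $\wt\lambda$ is unipotent; combined with $\bl(\lambda)^{\G^F}=\bl(\chi)$ and Lemma \ref{lem:Block induction and linear characters} (to handle the linear-character twists $\hat z_{\wt\G}$ appearing when one passes between $\G^F$ and $\wt\G^F$), one gets $\bl(\wt\lambda)^{\wt\G^F}=\bl(\wt\chi)$, and then by Lemma \ref{lem:Block induction and regularity} and transitivity of Brauer induction the same identity propagates to every intermediate $J$. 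I would then quote the precise compatibility lemma (the analogue of \cite[Lemma 4.something]{Ros-Generalized_HC_theory_for_Dade}) that says: a $\G^F$-central isomorphism whose defining representations restrict compatibly to blocks at the level of $\G^F$ and $\wt\G^F$ is automatically a $\G^F$-block isomorphism.

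\emph{Main obstacle.} The genuinely delicate point is the block-induction identity $\bl(\wt\psi_{J_\L})^{J}=\bl(\wt\chi_J)$ for the \emph{intermediate} groups $J$, not just for $\n_\G(\L)^F\leq\G^F$: one must know that Brauer induction from $\n_\G(\L)^F$ up through $J_\L$ and then across to $J$ is compatible with the corresponding chain on the global side, and this is where $\L^F$-regularity of the relevant blocks (Lemma \ref{lem:Block induction and regularity}) is used crucially to guarantee that induced blocks are defined and unique. I expect this step to require a careful bookkeeping of defect groups — showing $\z(\L)_\ell^F$ (equivalently $Q=\z(\M)_\ell^F$ in the notation of Lemma \ref{lem:Block induction and regularity}) is contained in a common defect group on both sides — exactly as in the proof of Lemma \ref{lem:Block induction and regularity}, and then invoking \cite[Theorem 9.something]{Nav98} on regular blocks and Brauer's first main theorem to conclude. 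Once that identity is in place, the verification of \cite[Remark 3.7 (iv)]{Spa17} is formal, and combined with Proposition \ref{prop:Parametrisation, central isomorphisms} it yields the asserted $\iso{\G^F}$.
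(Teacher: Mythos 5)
Your overall architecture matches the paper's: reduce to verifying the block-theoretic conditions of \cite[Remark 3.7]{Spa17} on top of Proposition \ref{prop:Parametrisation, central isomorphisms}, work with the unipotent extensions $\wt\chi$ and $\wt\psi=\wt\Omega(\wt\chi)$, use Cabanes--Enguehard to identify $\bl(\wt\chi)=b_{\wt\G^F}(\wt\L,\wt\lambda)=\bl(\wt\lambda)^{\wt\G^F}$, use Lemma \ref{lem:Block induction and regularity} for $\L^F$-regularity, and conclude by transitivity of Brauer induction. You also correctly flag that the genuine difficulty is the propagation of the block-induction identity to the intermediate groups $J$ between $\G^F$ and $\wt\G^F$.

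The gap is exactly at that flagged step. You write that the propagation follows by ``careful bookkeeping of defect groups \dots then invoking \cite[Theorem 9.something]{Nav98} on regular blocks and Brauer's first main theorem.'' Brauer's first main theorem is not the right tool here, and regularity alone does not give what is needed. The paper's mechanism is \cite[Theorem B]{Kos-Spa15}: after verifying, from $E:=\z(\L)_\ell^F\leq\O_\ell(\n_J(\L))\leq D$ and $\L^F=\c_{\G^F}(E)$, that $\c_{\wt\G^F}(D)\leq\n_{\wt\G}(\L)^F$ for every defect group $D$ of every block $C_0$ of $\n_J(\L)$, Koshitani--Sp\"ath guarantees that for any block $C$ of $\n_{\wt\G}(\L)^F$ covering $C_0$, both $C^{\wt\G^F}$ and $C_0^J$ are defined and $C^{\wt\G^F}$ covers $C_0^J$. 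This covering-compatibility statement is what lets one pass from $\wt B=\wt C^{\wt\G^F}$ to $\bl(\wt\chi_J)=\bl(\wt\psi_{\n_J(\L)})^J$ by uniqueness of the block of $J$ covered by $\wt B$. Without this (or an equivalent) lemma your Step 2 stalls. A second, smaller point: the paper's proof must also verify the actual defect-group condition \cite[Remark 3.7 (ii)]{Spa17}, not only (iv); the central isomorphism only supplies the weaker centralizer containment, and it is precisely the $E\leq D$ and $\c_{\wt\G^F}(D)\leq\n_{\wt\G}(\L)^F$ computation above that discharges (ii) as a byproduct. Finally, Lemma \ref{lem:Block induction and linear characters} (your proposed use of linear-twist bookkeeping) plays no role in this proposition; the twist by $\hat z$ is already absorbed into the construction of $\wt\Omega$ and is not needed to compare $\bl(\wt\lambda)^{\wt\G^F}$ with $\bl(\wt\chi)$, which is given directly by \cite[Proposition 4.2]{Cab-Eng94} applied to the pair $(\wt\L,\wt\lambda)$.
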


\begin{proof}
By Proposition \ref{prop:Parametrisation, central isomorphisms} it is enough to check the block theoretic requirement given by \cite[Remark 3.7 (ii) and (iv)]{Spa17}. First, observe that under our assumption \cite[Proposition 3.3 (ii)]{Cab-Eng94} shows that $\L^F=\c_{\G^F}(E)$ where $E:=\z(\L)^F_\ell$. In particular, $\n_J(\L)=\n_J(E)$ for every $\G^F\leq J\leq \wt{\G}^F$. Furthermore, for every block $C_0$ of $\n_J(\L)$ and every defect group $D$ of $C_0$ we have $E\leq \O_\ell(\n_J(\L))\leq D$ and hence $\c_{\wt{\G}^F}(D)\leq \n_{\wt{\G}}(\L)^F$. Now, \cite[Theorem B]{Kos-Spa15} implies that for every block $C$ of $\n_{\wt{\G}}(\L)^F$ covering $C_0$, the induced blocks $B:=C^{\wt{\G}^F}$ and $B_0:=C_0^J$ are well-defined and $B$ covers $B_0$.

Let $\wt{\chi}\in\wt{\mathcal{G}}$ be an extension of $\chi$ and set $\wt{\psi}:=\wt{\Omega}(\wt{\chi})$. By Lemma \ref{lem:Block induction and regularity} the block of $\wt{C}$ of $\wt{\psi}$ coincides with the induced block $\bl(\wt{\lambda})^{\n_{\wt{\G}}(\L)^F}$. Furthermore, by \cite[Proposition 4.2]{Cab-Eng94} we know that the block $\wt{B}$ of $\wt{\chi}$ coincides with $b_{\wt{\G}^F}(\wt{\L},\wt{\lambda})=\bl(\wt{\lambda})^{\wt{\G}^F}$. Then, by the transitivity of block induction we get $\wt{B}=\wt{C}^{\wt{\G}^F}$. Consider now $\G^F\leq J\leq \wt{\G}^F$ as in the previous paragraph and notice that $\bl(\wt{\chi}_J)$ is the unique block of $J$ covered by $\wt{B}$. Now, since $\bl(\wt{\psi}_{\n_J(\L)})$ is covered by $\wt{C}$, we deduce that $\bl(\wt{\psi}_{\n_J(\L)})^J$ is covered by $\wt{B}$ and therefore
\begin{equation}
\label{eq:Parametrisation, block isomorphism, 1}
\bl\left(\wt{\chi}_J\right)=\bl\left(\wt{\psi}_{\n_J(\L)}\right)^J.
\end{equation}
As explained in the proof of \cite[Theorem 4.8]{Ros-Clifford_automorphisms_HC} we can now use \eqref{eq:Parametrisation, block isomorphism, 1} together with Proposition \ref{prop:Parametrisation, central isomorphisms} to conclude the proof via an application of \cite[Theorem 4.1 (i)]{Spa17}.
\end{proof}

\subsection{Proof of Theorem \ref{thm:Main Parametrisation for simple groups}}
\label{sec:Proof of Main parametrisation}

\begin{proof}[Proof of Theorem \ref{thm:Main Parametrisation for simple groups}]
The hypothesis of Corollary \ref{cor:Equivariant BMM, extended} is satisfied under our restrictions on $\G$ according to Lemma \ref{lem:Extension to the graph and field locally} and therefore we obtain an $\aut_\mathbb{F}(\G^F)_{(\L,\lambda)}$-equivariant bijection
\[\Omega^\G_{(\L,\lambda)}:\E\left(\G^F,(\L,\lambda)\right)\to\irr\left(\n_\G(\L)^F\enspace\middle|\enspace\lambda\right)\]
that preserves the $\ell$-defect of characters. Next, observe that the groups $\wt{\G}^F\mathcal{A}$ and $X:=\G^F\rtimes \aut_\mathbb{F}(\G^F)$ induce the same automorphisms on $\G^F$ according to the description given in \cite[Section 2.5]{GLS}. Then, by applying \cite[Theorem 5.3]{Spa17} and Proposition \ref{prop:Parametrisation, block isomorphism}, we conclude that
\[\left(X_\chi,\G^F,\chi\right)\iso{\G^F}\left(\n_{X}(\L)_\psi,\n_{\G}(\L)^F,\psi\right)\]
for every $\chi\in\E(\G^F,(\L,\lambda))$ and where $\psi:=\Omega_{(\L,\lambda)}^\G(\chi)$ and the proof is now complete.
\end{proof}

\section{Consequences of Theorem \ref{thm:Main Parametrisation for simple groups}}

In this section, we collect some consequences of Theorem \ref{thm:Main Parametrisation for simple groups}. First, we extend the parametrisation obtained in Theorem \ref{thm:Main Parametrisation for simple groups} from unipotent $e$-Harish-Chandra series of the simple group $\G$ to pseudo-unipotent (see Definition \ref{def:Pseudo unipotent}) $e$-Harish-Chandra series  of the Levi subgroups of $\G$. More precisely, for every $F$-stable Levi subgroup $\K$ of $\G$, we construct a parametrisation of the $e$-Harish-Chandra series associated to $e$-cuspidal pairs of the form $(\L,\lambda)$ for some $(\K,F)$-pseudo-unipotent character $\lambda\in\ps_\K(\L^F)$. In a second step, we construct character bijections above this parametrisation by exploiting results on isomorphisms of character triples (see Corollary \ref{cor:iEBC going up}). This will allow us to control the characters of $e$-chain stabilisers lying above pseudo-unipotent characters (see Proposition \ref{prop:Parametrisation for e-chains stabilisers}).

\subsection{Parametrisation of pseudo-unipotent characters of Levi subgroups}

Let $\K$ be an $F$-stable Levi subgroup of $\G$ and set $\K_0:=[\K,\K]$. Observe that since the group $\G$ is simply connected, the subgroup $\K_0$ is also simply connected according to \cite[Proposition 12.14]{Mal-Tes}. In addition, under our assumption on the type of $\G$, we deduce that the simple components of $\K_0$ can only be of some of the types $\bf{A}$, $\bf{B}$ or $\bf{C}$.

\begin{prop}
\label{prop:From rational components to K_0}
For every unipotent $e$-cuspidal pair $(\L_0,\lambda_0)$ of $(\K_0,F)$ there exists a defect preserving $\aut_\mathbb{F}(\K_0^F)_{(\L_0,\lambda_0)}$-equivariant bijection
\[\Omega^{\K_0}_{(\L_0,\lambda_0)}:\E\left(\K_0^F,(\L_0,\lambda_0)\right)\to\irr\left(\n_{\K_0}(\L_0)^F\hspace{1pt}\middle|\hspace{1pt} \lambda_0\right)\]
such that
\[\left(Y_{\vartheta},\K_0^F,\vartheta\right)\iso{\K_0^F}\left(\n_{Y_{\vartheta}}(\L_0),\n_{\K_0}(\L_0)^F,\Omega^{\K_0}_{(\L_0,\lambda_0)}(\vartheta)\right)
\]
for every $\vartheta\in\E(\K_0^F,(\L_0,\lambda_0))$ and where $Y:=\K_0^F\rtimes \aut_\mathbb{F}(\K_0^F)$.
\end{prop}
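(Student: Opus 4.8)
The plan is to deduce the Proposition from Theorem \ref{thm:Main Parametrisation for simple groups} via the standard reduction of such statements to direct products of simple groups: one decomposes $\K_0$ into its simple factors, applies Theorem \ref{thm:Main Parametrisation for simple groups} to each factor, and recombines the resulting bijections and character triple isomorphisms using their compatibility with direct products, the only genuinely new point being the automorphisms permuting the simple factors.

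Since $\K_0=[\K,\K]$ is semisimple and simply connected by \cite[Proposition 12.14]{Mal-Tes}, we may write $\K_0=\G_1\times\cdots\times\G_n$ with each $\G_i$ simple simply connected, and $F$ permutes the $\G_i$; by the hypothesis on the type of $\G$, every $\G_i$ is of type $\bf{A}$, $\bf{B}$ or $\bf{C}$. Grouping the $\G_i$ into $F$-orbits $\mathcal{O}_1,\dots,\mathcal{O}_r$, fixing $\G_{i_j}\in\mathcal{O}_j$ and setting $F_j:=F^{|\mathcal{O}_j|}$ (a Frobenius endomorphism of $\G_{i_j}$), the usual identification $(\prod_{i\in\mathcal{O}_j}\G_i)^F\cong\G_{i_j}^{F_j}$ yields $\K_0^F\cong\prod_{j=1}^r\G_{i_j}^{F_j}$, and this isomorphism is compatible with $e$-Harish-Chandra theory: the $e$-split Levi $\L_0$ corresponds to a product $\prod_j\L_j$ with $(\L_j,\lambda_j)$ a unipotent $e$-cuspidal pair of $(\G_{i_j},F_j)$, the character $\lambda_0$ to the external tensor product of the $\lambda_j$, and the series $\E(\K_0^F,(\L_0,\lambda_0))$ and $\irr(\n_{\K_0}(\L_0)^F\mid\lambda_0)$ to the sets of external tensor products of the characters in the series $\E(\G_{i_j}^{F_j},(\L_j,\lambda_j))$ and in the sets $\irr(\n_{\G_{i_j}}(\L_j)^{F_j}\mid\lambda_j)$ respectively, with $\ell$-defects adding over the factors (here $\n_{\K_0}(\L_0)^F\cong\prod_j\n_{\G_{i_j}}(\L_j)^{F_j}$).

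Theorem \ref{thm:Main Parametrisation for simple groups} applied to each $(\G_{i_j},F_j)$ and $(\L_j,\lambda_j)$ yields a defect-preserving bijection $\Omega^{\G_{i_j}}_{(\L_j,\lambda_j)}$ together with the corresponding $\G_{i_j}^{F_j}$-block isomorphisms of character triples, and I would set $\Omega^{\K_0}_{(\L_0,\lambda_0)}$ equal to the external tensor product of these maps; it is then at once a defect-preserving bijection of the desired form. The character triple statement, at least over the subgroup $\prod_j\aut_\mathbb{F}(\G_{i_j}^{F_j})$ of $\aut_\mathbb{F}(\K_0^F)$, would follow from the compatibility of the relation $\iso{\K_0^F}$ with direct products: the external product of the triple isomorphisms furnished by Theorem \ref{thm:Main Parametrisation for simple groups} is an isomorphism of the corresponding product triples, using also that $\bl(\vartheta)$ and $\bl(\lambda_0)^{\n_{\K_0}(\L_0)^F}$ decompose as external products over the factors.

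The main obstacle is promoting the equivariance to the full group $\aut_\mathbb{F}(\K_0^F)$, since a bijective morphism of $\K_0$ commuting with $F$ may permute the $\G_i$, hence permute those $F$-orbits $\mathcal{O}_j$, and factors $\G_{i_j}^{F_j}$, that are isomorphic, so the plain external product of the $\Omega^{\G_{i_j}}_{(\L_j,\lambda_j)}$ need not be stable under such automorphisms. To deal with this I would partition the indices $j$ into classes on which $\aut_\mathbb{F}(\K_0^F)$ acts transitively by permutation and, within each class, fix a coherent family of bijections $\Omega^{\G_{i_j}}_{(\L_j,\lambda_j)}$ --- obtained from a coherent choice of the extensions appearing in Corollary \ref{cor:Equivariant BMM, extended} --- by transporting one such choice along the permuting automorphisms; this is legitimate because the construction underlying Theorem \ref{thm:Main Parametrisation for simple groups} is natural with respect to isomorphisms of finite reductive groups. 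With these choices the external product map is equivariant under both the within-factor automorphisms (by Theorem \ref{thm:Main Parametrisation for simple groups}) and the factor-permuting ones (by construction), and the attached character triple isomorphisms stay compatible with the combined action --- this being precisely the \emph{wreath product} bookkeeping familiar from the reductions of the inductive McKay and Alperin--McKay conditions to direct products, which I would quote rather than reprove. One then checks, as in \cite[Section 2.5]{GLS}, that $\aut_\mathbb{F}(\K_0^F)$ induces on $\K_0^F\cong\prod_j\G_{i_j}^{F_j}$ exactly the automorphisms coming from this wreath product, which delivers the statement for $Y=\K_0^F\rtimes\aut_\mathbb{F}(\K_0^F)$.
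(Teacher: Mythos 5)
Your proposal is correct and follows essentially the same route as the paper: decompose $\K_0^F$ along the $F$-orbits of simple factors of the semisimple simply connected group $\K_0=[\K,\K]$, identify each orbit's fixed points with $\G_{i_j}^{F_j}$ (what the paper writes as $\H_j^F\simeq\H_{j,1}^{F^{m_j}}$), apply Theorem~\ref{thm:Main Parametrisation for simple groups} to each such simple factor, take the external product of the resulting bijections, and then invoke a standard wreath-product bookkeeping argument to upgrade the equivariance from the within-factor automorphisms to all of $\aut_\mathbb{F}(\K_0^F)$ and to assemble the $\K_0^F$-block isomorphisms of character triples. The only cosmetic difference is that the paper defers the final bookkeeping step to a specific earlier reference ([Proposition~6.5, Ros-Generalized\_HC\_theory\_for\_Dade]), where you describe the same coherent-choice/transport-along-permutations idea in words and then quote the general principle; the content is the same.
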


\begin{proof}
Notice that $\K_0$ is the direct product of simple algebraic groups $\K_1,\dots, \K_n$ and that the action of $F$ permutes the simple components $\K_i$. Denote the direct product of the simple components in each $F$-orbit by $\H_j$ for $j=1,\dots, t$. The $(\H_j,F)$ are the irreducible rational components of $(\K,F)$ and we have $\K_0^F=\H_1^F\times \cdots\times \H_t^F$. Similarly, if we define the intersections $\M_j:=\L_0\cap \H_j$, then we have a decomposition $\L_0^F=\M_1^F\times \cdots \times\M_t^F$. In particular, we can write $\lambda_0=\mu_1\times \cdots \times\mu_t$ with $\mu_j\in\irr(\M_j^F)$. In this case, notice that $(\M_j,\mu_j)$ is a unipotent $e$-cuspidal pair of $(\H_j,F)$. Next, suppose that $\H_j=\H_{j,1}\times\dots\times \H_{j,m_j}$ and observe that $\H_j^F\simeq \H_{j,1}^{F^{m_j}}$. By the discussion at the beginning of this section we know that $\H_{j,1}$ is a simple, simply connected group of type $\bf{A}$, $\bf{B}$ or $\bf{C}$ and hence it satisfies the assumptions of Theorem \ref{thm:Main Parametrisation for simple groups}. Then, via the isomorphism $\H_j^F\simeq \H_{j,1}^{F^{m_j}}$, we obtain an $\aut_{\mathbb{F}}(\H_j^F)_{(\M_j,\mu_j)}$-equivarint bijection
\[\Omega^{\H_j}_{(\M_j,\mu_j)}:\E\left(\H_j^F,(\M_j,\mu_j)\right)\to\irr\left(\n_{\H_j}(\M_j)^F\hspace{1pt}\middle|\hspace{1pt}\mu_j\right)\]
that preserves the defect of characters and such that
\begin{equation}
\label{eq:From rational components to K_0}
\left(Y_{j,\vartheta},\H_j^F,\vartheta\right)\iso{\H_j^F}\left(\n_{Y_{j,\vartheta}}(\M_j),\n_{\H_j}(\M_j)^F,\Omega^{\H_j}_{(\M_j,\mu_j)}(\vartheta)\right)
\end{equation}
for every $\vartheta\in\E(\H_j^F,(\M_j,\mu_j))$ and where $Y_j:=\H_j^F\rtimes \aut_\mathbb{F}(\H_j^F)$. Since the characters in the sets $\E(\K_0^F,(\L_0,\lambda_0))$ and $\irr(\n_{\K_0}(\L_0)^F\mid \lambda_0)$ are direct products of characters belonging to the sets $\E(\H_j^F,(\M_j,\mu_j))$ and $\irr(\n_{\H_j}(\M_j)^F\mid \mu_j)$ respectively, we obtain a bijection
\[\Omega_{(\L_0,\lambda_0)}^{\K_0}:\E\left(\K_0^F,(\L_0,\lambda_0)\right)\to\irr\left(\n_{\K_0}(\L_0)^F\hspace{1pt}\middle|\hspace{1pt} \lambda_0\right)\]
by setting
\[\Omega_{(\L_0,\lambda_0)}^{\K_0}\left(\vartheta_1\times\dots\times \vartheta_t\right):=\Omega_{(\M_1,\mu_1)}^{\H_1}(\vartheta_1)\times \dots\times \Omega_{(\M_t,\mu_t)}^{\H_t}(\vartheta_t)\]
for every $\vartheta_j\in\E(\H_j^F,(\M_j,\mu_j))$. Finally, arguing as in the proof of \cite[Proposition 6.5]{Ros-Generalized_HC_theory_for_Dade}, we deduce that the bijection $\Omega^{\K_0}_{(\L_0,\lambda_0)}$ preserves the defect of characters, is $\aut_\mathbb{F}(\K_0^F)_{(\L_0,\lambda_0)}$-equivariant, and, using \eqref{eq:From rational components to K_0}, it induces the $\K_0^F$-block isomorphisms of character triples required in the statement.
\end{proof}

In our next result, we replace the automorphism group $Y:=\K_0^F\rtimes \aut_\mathbb{F}(\K_0^F)$ with the group of automorphisms of $\G^F$ stabilising $\K$, that is, $X:=(\G^F\rtimes \aut_\mathbb{F}(\G^F))_\K$. To do so, we apply the so-called \textit{Butterfly theorem} \cite[Theorem 5.3]{Spa17} which basically states that, for any finite group $G$, the notion of $G$-block isomorphism of character triples only depends on the automorphisms induced on $G$.

\begin{cor}
\label{cor:From rational components to K_0}
Let $(\L_0,\lambda_0)$ be a unipotent $e$-cuspidal pair of $(\K_0,F)$. The map $\Omega_{(\L_0,\lambda_0)}^{\K_0}$ given by Proposition \ref{prop:From rational components to K_0} is $\aut_\mathbb{F}(\G^F)_{\K,(\L_0,\lambda_0)}$-equivariant and satisfies
\begin{equation}
\label{eq:cor From rational components to K_0, 1}
\left(X_{\vartheta},\K_0^F,\vartheta\right)\iso{\K_0^F}\left(\n_{X_{\vartheta}}(\L_0),\n_{\K_0}(\L_0)^F,\Omega^{\K_0}_{(\L_0,\lambda_0)}(\vartheta)\right)
\end{equation}
for every $\vartheta\in\E(\K_0^F,(\L_0,\lambda_0))$ and where $X:=(\G^F\rtimes \aut_\mathbb{F}(\G^F))_\K$.
\end{cor}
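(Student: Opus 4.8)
The plan is to upgrade the $\K_0^F$-block isomorphisms of character triples from Proposition \ref{prop:From rational components to K_0}, which are stated relative to the automorphism group $Y=\K_0^F\rtimes\aut_\mathbb{F}(\K_0^F)$, to isomorphisms stated relative to $X=(\G^F\rtimes\aut_\mathbb{F}(\G^F))_\K$, using the Butterfly theorem \cite[Theorem 5.3]{Spa17}. The key point is that $\G^F$-block isomorphism of character triples, as a relation between triples, depends only on the automorphisms that the ambient group induces on the normal subgroup; so once we check that $X$ and a suitable extension of $Y$ induce ``compatible'' automorphisms on $\K_0^F$ (and on $\n_{\K_0}(\L_0)^F$), the statement \eqref{eq:cor From rational components to K_0, 1} follows formally from \eqref{eq:From rational components to K_0} (or rather its consequence in Proposition \ref{prop:From rational components to K_0}) together with \cite[Theorem 5.3]{Spa17}.

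First I would record the group-theoretic setup. Since $\K$ is an $F$-stable Levi subgroup of $\G$, conjugation gives a homomorphism from $X=(\G^F\rtimes\aut_\mathbb{F}(\G^F))_\K$ into the automorphism group of $\K^F$, and hence, since $\K_0=[\K,\K]$ is characteristic in $\K$, into $\aut(\K_0^F)$; moreover the image lands inside $\aut_\mathbb{F}(\K_0^F)$ because a bijective morphism of $\G$ commuting with $F$ and stabilising $\K$ restricts to a bijective morphism of $\K_0$ commuting with $F$ (here one uses that $\K_0$, being the derived subgroup, is stable under any automorphism of $\K$ coming from an algebraic automorphism). This gives a natural map $X\to Y/\!\!\sim$ in the sense needed by the Butterfly theorem: concretely, writing $C:=\c_X(\K_0^F)$, the quotient $X/C$ embeds into $\aut(\K_0^F)$ with image contained in the image of $Y$, so $X$ and $Y$ are two groups ``inducing the same'' (or, more precisely, comparable) automorphisms on $\K_0^F$ in the sense of \cite[Section 5]{Spa17}.

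Next I would verify the equivariance assertion. For $\alpha\in\aut_\mathbb{F}(\G^F)_{\K,(\L_0,\lambda_0)}$, its restriction to $\K_0^F$ lies in $\aut_\mathbb{F}(\K_0^F)_{(\L_0,\lambda_0)}$; since $\Omega_{(\L_0,\lambda_0)}^{\K_0}$ is $\aut_\mathbb{F}(\K_0^F)_{(\L_0,\lambda_0)}$-equivariant by Proposition \ref{prop:From rational components to K_0}, it is a fortiori equivariant for the action of the subgroup of $\aut_\mathbb{F}(\K_0^F)$ induced by $\aut_\mathbb{F}(\G^F)_{\K,(\L_0,\lambda_0)}$. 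One should also note that $\n_{\K_0}(\L_0)$ is stable under any such $\alpha$ (it stabilises $\L_0$ by hypothesis), so both sides of \eqref{eq:cor From rational components to K_0, 1} are genuinely acted on by $X_\vartheta$ and the normaliser of $\L_0$ in it. Then, with $\chi:=\vartheta$ on the global side and $\Omega^{\K_0}_{(\L_0,\lambda_0)}(\vartheta)$ on the local side, I would feed the triple isomorphism \eqref{eq:From rational components to K_0} for $Y$ into \cite[Theorem 5.3]{Spa17} to transport it to a triple isomorphism for $X$, obtaining precisely \eqref{eq:cor From rational components to K_0, 1}. The Butterfly theorem is exactly designed for this replacement of the ``ambient'' overgroup, so once the compatibility of induced automorphisms is in place the conclusion is immediate.

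The main obstacle I anticipate is purely bookkeeping rather than conceptual: one must make sure the hypotheses of \cite[Theorem 5.3]{Spa17} are literally met, namely that there is a common finite group mapping onto both the relevant quotient of $X$ and the relevant quotient of $Y$ with matching kernels acting trivially on $\K_0^F$, and that the normal subgroup $\n_{\K_0}(\L_0)^F$ (not just $\K_0^F$) is handled consistently on the local side — i.e. that the map $X_\vartheta\to\aut(\n_{\K_0}(\L_0)^F)$ factors through the corresponding quotient of $Y$. This is where one invokes that $\K_0^F$, being generated by root subgroups, is normal in $X$ with $\c_X(\K_0^F)$ central enough, and that $\n_{\K_0}(\L_0)^F$ is characteristic in the relevant sense; none of this is deep, but it must be spelled out. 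A subtle point worth a line is that the statement of Proposition \ref{prop:From rational components to K_0} already phrases the isomorphism with the full $Y=\K_0^F\rtimes\aut_\mathbb{F}(\K_0^F)$, so the Butterfly theorem applies directly with $Y$ as the ``maximal'' overgroup, which streamlines the argument.
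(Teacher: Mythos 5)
Your approach is essentially the paper's: observe that, since $\K_0$ is an $F$-stable characteristic subgroup of $\K$, restriction gives an inclusion of $\aut_\mathbb{F}(\G^F)_\K$ into $\aut_\mathbb{F}(\K_0^F)$ (which immediately yields the claimed equivariance), and then transport the $\K_0^F$-block isomorphisms of Proposition~\ref{prop:From rational components to K_0} from the overgroup $Y$ to the overgroup $X$ via the Butterfly theorem \cite[Theorem 5.3]{Spa17}. One small remark: the paper also invokes \cite[Lemma 3.8]{Spa17} alongside Theorem 5.3 (to handle the conjugation/orbit bookkeeping you flag at the end), and the ``compatibility'' you worry about is in fact the clean containment $\aut_\mathbb{F}(\G^F)_\K \hookrightarrow \aut_\mathbb{F}(\K_0^F)$, so the hypotheses of the Butterfly theorem are met without further ado.
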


\begin{proof}
First, observe that $\aut_\mathbb{F}(\G^F)_\K$ is contained in $\aut_\mathbb{F}(\K_0^F)$ because $\K_0$ is an $F$-stable characteristic subgroup of $\K$. In particular, we deduce that the map $\Omega_{(\L_0,\lambda_0)}^{\K_0}$ is $\aut_\mathbb{F}(\G^F)_{\K,(\L_0,\lambda_0)}$-equivariant. Next, to obtain \eqref{eq:cor From rational components to K_0, 1}, we apply \cite[Lemma 3.8 and Theorem 5.3]{Spa17} to the isomorphism of character triples given by Proposition \ref{prop:From rational components to K_0} as explained in the proof of \cite[Corollary 6.8]{Ros-Generalized_HC_theory_for_Dade}.
\end{proof}

Isomorphisms of character triples play a fundamental role in representation theory of finite groups and in the study of the local-global conjectures. One of the most important consequences of the existence of isomorphisms of character triples is the possibility to lift character bijections. For instance, the main result of \cite{Nav-Spa14I}, shows how to apply this technique to construct bijections above characters of height zero in the context of the Alperin--McKay Conjecture \cite[Theorem B]{Nav-Spa14I}. The main consequence of this result, which follows from an argument introduced by Murai \cite{Mur12}, is a reduction theorem for the celebrated Brauer's Height Zero Conjecture \cite[Theorem A]{Nav-Spa14I}. This strategy ultimately lead to the solution of Brauer's conjecture \cite{Ruh22AM} and \cite{MNSFT}. For other applications of isomorphisms of character triples see \cite{Tur17}, \cite{Nav-Spa-Val}, \cite{Ros22}, \cite{Ruh22} \cite{Ros-iMcK} and \cite[Proposition 1.1]{Mar-Ros}.

In our next result, we exploit this idea in order to lift the bijections given by Proposition \ref{prop:From rational components to K_0} to the Levi subgroup $\K$. Consequently, we extend the parametrisation of unipotent $e$-Harish-Chandra series given by Theorem \ref{thm:Main Parametrisation for simple groups} for the simple group $\G$ to a parametrisation of $e$-Harish-Chandra series associated to $(\K,F)$-pseudo-unipotent characters for every $F$-stable Levi subgroup $\K$ of $\G$. First, we need a preliminary lemma.

\begin{lem}
\label{lem:Defect groups}
Let $(\L,\lambda)$ be a unipotent $e$-cuspidal pair of $(\K,F)$ and define $X:=(\G^F\rtimes \aut_\mathbb{F}(\G^F))_\K$. If $\K^F\leq H\leq \n_\G(\L)^F$ and $Q$ is an $\ell$-radical subgroup of $\n_H(\L)$, then $\c_X(Q)\leq \n_X(\L)$.
\end{lem}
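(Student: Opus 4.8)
The plan is to exploit the fact that $\L$ is the centraliser of an $\ell$-subgroup, so that its normaliser can be detected via a small subgroup, and then to show that this detecting subgroup is normalised by $\c_X(Q)$. First I would recall that, under the standing assumptions ($\G$ simple simply connected of type $\mathbf{A}$, $\mathbf{B}$, $\mathbf{C}$, and $\ell$ odd), $\L$ is a unipotent $e$-cuspidal Levi subgroup of $(\K,F)$, hence $e$-split in $(\K,F)$ (and in fact in $(\G,F)$ as well). By \cite[Proposition 3.3 (ii)]{Cab-Eng94}, setting $E:=\z(\L)^F_\ell$ we have $\L^F=\c_{\K^F}(E)$; and since $\L$ is $e$-split in $\G$, the same kind of argument (or \cite[Proposition 13.19]{Cab-Eng04}) gives $\L^F=\c_{\G^F}(E)$, and consequently $\n_{J}(\L)=\n_J(E)$ for every $\G^F\le J\le X$ (here $X$ normalises $\K$, hence acts on its $F$-stable subgroups). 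In particular $\n_X(\L)=\n_X(E)$.

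The key point is then to locate $E$ (or a suitable characteristic $\ell$-subgroup containing it) inside an appropriate $\ell$-radical subgroup. Given the $\ell$-radical subgroup $Q$ of $\n_H(\L)$, I would first observe that $Q$ normalises $\L$ — because $Q\le \n_H(\L)$ and this group normalises $\L$ — hence $Q$ normalises $E=\z(\L)^F_\ell$, so $QE$ is an $\ell$-subgroup of $\n_H(\L)$ (note $E\le\z(\n_H(\L))$'s $\ell$-part, more precisely $E\le\O_\ell(\n_H(\L))$ since $E\unlhd H$ and $E$ is an $\ell$-group). By the defining property of $\ell$-radical subgroups, $\O_\ell(\n_{\n_H(\L)}(Q))=Q$; since $E$ is normal in $\n_H(\L)$ it lies in $\O_\ell$ of any subgroup containing it that normalises it, in particular $E\le \O_\ell(\n_{\n_H(\L)}(Q))=Q$. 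Thus $E\le Q$. Now take $g\in\c_X(Q)$. Then $g$ centralises $Q$, hence centralises $E\le Q$, hence $g\in\c_X(E)\le\n_X(E)=\n_X(\L)$, which is exactly the claim.

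The step I expect to be the main obstacle is the verification that $E\le Q$, i.e.\ that the $F$-stable $\ell$-part of $\z(\L)$ is contained in the given $\ell$-radical subgroup $Q$ of $\n_H(\L)$. This hinges on $E$ being normal in $\n_H(\L)$, which in turn requires $H\le\n_\G(\L)^F$ (so that $E=\z(\L)^F_\ell$ is $H$-stable, being characteristic in $\L^F\unlhd H$) together with the fact that $\ell$-radical subgroups $Q$ of a finite group $M$ always satisfy $\O_\ell(M)\le Q$ (immediate from $Q=\O_\ell(\n_M(Q))\ge\O_\ell(M)$ since $\O_\ell(M)$ centralises nothing but is normal, hence normalises $Q$ and lies in $\n_M(Q)$, and $\O_\ell(M)\le\O_\ell(\n_M(Q))$). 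Once $E\le\O_\ell(\n_H(\L))\le Q$ is in place, the conclusion $\c_X(Q)\le\c_X(E)\le\n_X(\L)$ is formal. The only mild subtlety is ensuring $\n_X(\L)=\n_X(E)$ over the possibly disconnected overgroup $X$, which follows from $\L^F=\c_{\G^F}(E)$ and the fact that elements of $X$ act as automorphisms of $\G^F$ normalising $\K^F$, hence send $\L$ to the centraliser of the image of $E$.
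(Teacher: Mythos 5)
Your proof is correct and follows essentially the same route as the paper. The paper sets $E:=\z(\L)^F_\ell$, invokes $\L=\c_\G^\circ(E)$ from \cite[Proposition 3.3 (ii)]{Cab-Eng94}, and concludes $E\leq\O_\ell(\n_H(\L))\leq Q$ (citing \cite[Proposition 1.4]{Dad92} for the fact that $\O_\ell(\n_H(\L))$ is the smallest $\ell$-radical subgroup), whence $\c_X(Q)\leq\c_X(E)\leq\n_X(\L)$; you spell out the same two inclusions from first principles rather than by citation. One small point worth tightening: to get $\n_X(E)\leq\n_X(\L)$ over the full overgroup $X$ (which contains outer automorphisms), it is cleanest to work with the algebraic-group identity $\L=\c_\G^\circ(E)$ rather than with $\L^F=\c_{\G^F}(E)$, since the latter only directly controls normalisers of $\L^F$; you gesture at this in your final sentence, but it is the connected-centraliser statement that closes the gap.
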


\begin{proof}
Let $E:=\z(\L)_\ell^F$ and observe that $\L=\c_{\G}^\circ(E)$ according to \cite[Proposition 3.3 (ii)]{Cab-Eng94}. Now, since $\O_\ell(\n_H(\L))$ is the smallest $\ell$-radical subgroup of $\n_H(\L)$ \cite[Proposition 1.4]{Dad92}, we deduce that $E\leq \O_\ell(\n_H(\L))\leq Q$ and it follows that $\c_X(Q)\leq \c_X(E)\leq \n_X(\L)$ as wanted.
\end{proof}

\begin{theo}
\label{thm:Parametrisation for reductive groups}
For every unipotent $e$-cuspidal pair $(\L,\lambda)$ of $(\K,F)$ there exists a defect preserving $\aut_\mathbb{F}(\G^F)_{\K,(\L,\lambda)}$-equivariant bijection
\[\Omega_{(\L,\lambda)}^\K:\E\left(\K^F,(\L,{{\rm ps}_\K(\lambda)})\right)\to\irr\left(\n_\K(\L)^F\enspace\middle|\enspace {{\rm ps}_\K(\lambda)}\right)\]
such that
\[\left(X_\chi,\K^F,\chi\right)\iso{\K^F}\left(\n_{X_\chi}(\L),\n_{\K}(\L)^F,\Omega_{(\L,\lambda)}^\K(\chi)\right)\]
for every $\chi\in\E(\K^F,(\L,\ps_\K(\lambda)))$ and where $X:=(\G^F\rtimes \aut_\mathbb{F}(\G^F))_\K$.
\end{theo}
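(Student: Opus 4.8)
The strategy is to deduce the pseudo-unipotent case from the unipotent case over $\K_0=[\K,\K]$ (Proposition~\ref{prop:From rational components to K_0} and Corollary~\ref{cor:From rational components to K_0}) by a two-step ascent: first from $\K_0^F$ up to $\K^F$, then twisting by the linear characters $\hat z_\K$ to pass from unipotent to pseudo-unipotent series. For the first step, observe that $\K^F/\K_0^F$ is abelian and that a unipotent $e$-cuspidal pair $(\L,\lambda)$ of $(\K,F)$ restricts to a union of $\K^F$-conjugates of a unipotent $e$-cuspidal pair $(\L_0,\lambda_0)$ of $(\K_0,F)$, with $\E(\K^F,(\L,\lambda))$ lying above $\E(\K_0^F,(\L_0,\lambda_0))$ and $\irr(\n_\K(\L)^F\mid\lambda)$ lying above $\irr(\n_{\K_0}(\L_0)^F\mid\lambda_0)$ (compatibly, since $\L_0=[\L,\L]$ and $\n_\K(\L)$ normalises $\L_0$). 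The bijection of Corollary~\ref{cor:From rational components to K_0} is $X_{(\L_0,\lambda_0)}$-equivariant and carries $\K_0^F$-block isomorphisms of character triples; applying the standard lifting machinery for such isomorphisms (as in \cite{Nav-Spa14I}, and as used for instance in \cite{Ros-Generalized_HC_theory_for_Dade}) — using the projective-representation bookkeeping together with the equivariance — produces a defect-preserving, $\aut_\mathbb{F}(\G^F)_{\K,(\L,\lambda)}$-equivariant bijection $\E(\K^F,(\L,\lambda))\to\irr(\n_\K(\L)^F\mid\lambda)$ realising $\K^F$-block isomorphisms of character triples $(X_\chi,\K^F,\chi)\iso{\K^F}(\n_{X_\chi}(\L),\n_\K(\L)^F,\cdot)$. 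The block-theoretic hypothesis needed to run the lifting (the containment of centralisers of $\ell$-radical subgroups inside $\n_X(\L)$) is exactly Lemma~\ref{lem:Defect groups}.

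For the second step, recall from Section~\ref{sec:Pseudo-unipotent} that $\ps_\K(\lambda)=\{\lambda\hat z_\L : z\in\z(\K^*)^{F^*}\}$ and that $\E(\K^F,(\L,\lambda\hat z_\L))=\{\varphi\hat z_\K : \varphi\in\E(\K^F,(\L,\lambda))\}$, while on the local side $\irr(\n_\K(\L)^F\mid\lambda\hat z_\L)=\{\psi\cdot(\hat z_\K)_{\n_\K(\L)^F} : \psi\in\irr(\n_\K(\L)^F\mid\lambda)\}$ since $\z(\K^*)^{F^*}\le\z(\L^*)^{F^*}$ and restriction of $\hat z_\K$ agrees with $\hat z_\L$. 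Using that $\lambda$ is the \emph{unique} unipotent character in $\ps_\K(\lambda)$ (\cite[Proposition~8.26]{Cab-Eng04}) — so the union $\E(\K^F,(\L,\ps_\K(\lambda)))$ is a genuine disjoint union over $z$ up to the obvious identification — one defines
\[
\Omega_{(\L,\lambda)}^\K(\varphi\hat z_\K):=\Omega_{(\L,\lambda)}^\K(\varphi)\cdot(\hat z_\K)_{\n_\K(\L)^F},
\]
where on the right $\Omega_{(\L,\lambda)}^\K$ denotes the unipotent bijection from the first step. This is well-defined and bijective, and it preserves $\ell$-defect because multiplying by the linear character $\hat z_\K$ does not change defect (by \cite[Lemma~2.1]{Riz18}, as already invoked in the proof of Lemma~\ref{lem:Block induction and regularity}).

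It remains to check equivariance and the character-triple isomorphism for the twisted map. Equivariance under $\aut_\mathbb{F}(\G^F)_{\K,(\L,\lambda)}$ follows from the unipotent case together with the fact that automorphisms permute the $\hat z_\K$ compatibly with their action on $\z(\K^*)^{F^*}$ (as encoded in the semidirect product $(\wt\G^F\mathcal A)\ltimes\mathcal K$ of Section~\ref{sec:Construction of isomorphisms of character triples}); here one also uses that $\hat z_\K$ is $\K^F$-invariant and restricts to $\hat z_\L$, so stabilisers behave correctly. For the character triples, the point is that tensoring the central and block data by a fixed linear character $\zeta=\hat z_\K$ of $X$ (restricted appropriately) transforms a $\K^F$-block isomorphism into another one: the projective representations and factor sets are unchanged up to the scalar twist by $\zeta$, which cancels on both sides, and on the block side Lemma~\ref{lem:Block induction and linear characters} shows $\bl(\varphi\hat z_\K)$ and $\bl(\Omega(\varphi)\cdot(\hat z_\K)_{\n_\K(\L)^F})$ are related by the same twist and hence still correspond under Brauer induction. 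Assembling these over all $z\in\z(\K^*)^{F^*}$ yields the stated $\Omega_{(\L,\lambda)}^\K$ on the full pseudo-unipotent series. The main obstacle is the first step: ensuring that the lifting of $\K_0^F$-block isomorphisms through the abelian quotient $\K^F/\K_0^F$ produces genuine $\K^F$-block isomorphisms (not merely central ones) requires carefully tracking the block-induction compatibility, which is where Lemma~\ref{lem:Defect groups} and the block-covering results of \cite{Kos-Spa15} enter; the twisting in the second step is comparatively routine.
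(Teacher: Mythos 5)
Your overall strategy — deduce the statement from Proposition~\ref{prop:From rational components to K_0}/Corollary~\ref{cor:From rational components to K_0} by lifting through $\K_0^F\unlhd\K^F$ using the machinery of \cite[Proposition 6.1]{Ros-Generalized_HC_theory_for_Dade}, with Lemma~\ref{lem:Defect groups} supplying the defect-group hypothesis — is the right direction and matches the paper. But the decomposition into two steps contains a genuine gap in the first step.

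The lifting machinery applied to the bijection $\Omega^{\K_0}_{(\L_0,\lambda_0)}:\E(\K_0^F,(\L_0,\lambda_0))\to\irr(\n_{\K_0}(\L_0)^F\mid\lambda_0)$ produces a bijection between $\irr\bigl(\K^F\mid\E(\K_0^F,(\L_0,\lambda_0))\bigr)$ and $\irr\bigl(\n_\K(\L)^F\mid\irr(\n_{\K_0}(\L_0)^F\mid\lambda_0)\bigr)$ — i.e.\ between \emph{all} characters of $\K^F$ lying above the source set and all characters of $\n_\K(\L)^F$ lying above the target set. These sets are \emph{not} $\E(\K^F,(\L,\lambda))$ and $\irr(\n_\K(\L)^F\mid\lambda)$: since $\lambda_0=\lambda_{\L_0^F}=(\lambda\hat z_\L)_{\L_0^F}$ for every $z\in\z(\K^*)^{F^*}$, the set $\irr(\K^F\mid\E(\K_0^F,(\L_0,\lambda_0)))$ already equals the full pseudo-unipotent series $\E(\K^F,(\L,\ps_\K(\lambda)))$, and analogously on the local side. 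So your claim that the lift yields a bijection $\E(\K^F,(\L,\lambda))\to\irr(\n_\K(\L)^F\mid\lambda)$ restricted to the unipotent part does not follow: the lift cannot tell unipotent characters of $\K^F$ apart from their $\hat z_\K$-twists, because they have the same restriction to $\K_0^F$. Your second ``twisting'' step is then both redundant and circular: the pseudo-unipotent bijection is what the lift hands you directly, and reassembling it from a nonexistent unipotent sub-bijection is exactly the identification you would need to prove to make step~1 meaningful. The paper's proof avoids the issue by first establishing the identities $\E(\K^F,(\L,\ps_\K(\lambda)))=\irr(\K^F\mid\E(\K_0^F,(\L_0,\lambda_0)))$ and $\irr(\n_\K(\L)^F\mid\ps_\K(\lambda))=\irr(\n_\K(\L)^F\mid\irr(\n_{\K_0}(\L_0)^F\mid\lambda_0))$ (using \cite[(8.19), (8.20)]{Cab-Eng04}, \cite[Proposition 3.1]{Cab-Eng94} and Gallagher), and then applying the lifting machinery \emph{once} to produce the pseudo-unipotent bijection in a single pass.

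A secondary caveat, should you wish to rescue the two-step plan by constructing the unipotent bijection on $\K^F$ independently and then twisting: your sketch of why twisting by $\hat z_\K$ preserves $\K^F$-block isomorphisms of character triples glosses over the change of inertia groups. The triple on the left is $(X_{\varphi\hat z_\K},\K^F,\varphi\hat z_\K)$, and $X_{\varphi\hat z_\K}$ need not equal $X_\varphi$ unless $\hat z_\K$ is stabilised by the relevant automorphisms; one would have to track this as carefully as in the proof of Lemma~\ref{lem:Equivariant over Omega}, rather than asserting that the scalar ``cancels on both sides.'' This is avoidable by following the paper's one-step route.
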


\begin{proof}
Recall that $\K_0=[\K,\K]$ and define $\L_0:=\L\cap \K_0$ and $\lambda_0$ the restriction of $\lambda$ to $\L_0^F$. Observe that $(\L_0,\lambda_0)$ is a unipotent $e$-cuspidal pair of $(\K_0,F)$. Let $z\in\z(\K^*)^{F^*}$ and consider a character $\chi$ belonging to $\E(\K^F,(\L,\lambda\hat{z}_\L))$. Since the restriction of $\lambda\hat{z}_\L$ to $\L_0^F$ coincides with $\lambda_0$, \cite[Corollary 3.3.25]{Gec-Mal20} implies that $\chi$ lies above some character in $\E(\K^F_0(\L_0,\lambda_0))$. On the other hand, suppose that $\chi\in\irr(\K^F)$ lies above some $\chi_0\in\E(\K^F_0,(\L_0,\lambda_0))$. By \cite[Proposition 3.1]{Cab-Eng94} the character $\chi_0$ has an extension $\chi'\in\E(\K^F,(\L,\lambda))$ and hence, using Gallagher's theorem \cite[Corollary 6.17]{Isa76} and \cite[(8.19)]{Cab-Eng04}, we can find $z\in\z(\K^*)^{F^*}$ such that $\chi=\chi'\hat{z}_\K$. Since $\chi'\hat{z}_\K$ is a character of $\E(\K^F,(\L,\lambda\hat{z}_\L))$ according to \cite[(8.20)]{Cab-Eng04}, we conclude that
\begin{equation}
\label{eq:Parametrisation for reductive groups, 1}
\E\left(\K^F,(\L,\ps_\K(\lambda))\right)=\irr\left(\K^F\enspace\middle|\enspace \E\left(\K^F_0,(\L_0,\lambda_0)\right)\right).
\end{equation}
Next, suppose that $\psi\in\irr(\n_\K(\L)^F\mid \lambda\hat{z}_\L)$. In this case, $\psi$ lies above the restriction of $\lambda\hat{z}_\L$ to $\L_0^F$ which coincides with $\lambda_0$. In particular, there exists some $\varphi\in\irr(\n_{\K_0}(\L_0)^F\mid \lambda_0)$ such that $\psi$ lies above $\varphi$. On the other, if $\chi$ lies above such a character $\varphi\in\irr(\n_{\K_0}(\L_0)^F\mid \lambda_0)$, then it lies above $\lambda_0$ and therefore we can find $z\in\z(\K^*)^{F^*}$ such that $\psi\in\irr(\n_\K(\L)^F\mid \lambda\hat{z}_\L)$. This shows that
 \begin{equation}
\label{eq:Parametrisation for reductive groups, 2}
\irr\left(\n_\K(\L)^F\enspace\middle|\enspace \ps_\K(\lambda)\right)=\irr\left(\n_{\K}(\L)^F\enspace\middle|\enspace \irr\left(\n_{\K_0}(\L_0)^F\enspace\middle|\enspace \lambda_0\right)\right).
\end{equation}
Finally, consider the map $\Omega_{(\L_0,\lambda_0)}^{\K_0}$ given by Proposition \ref{prop:From rational components to K_0}. Then, the result follows from \eqref{eq:Parametrisation for reductive groups, 1} and \eqref{eq:Parametrisation for reductive groups, 2} by applying \cite[Proposition 6.1 and Remark 6.2]{Ros-Generalized_HC_theory_for_Dade} as explained in the proof of \cite[Corollary 6.10]{Ros-Generalized_HC_theory_for_Dade} and using the $\K^F$-block isomorphisms of character triples obtained in Corollary \ref{cor:From rational components to K_0}. Here, we consider $A:=\G^F\rtimes \aut_\mathbb{F}(\G^F)$, $A_0:=\n_A(\L)$, $K:=\K_0^F$, $K_0=\n_{\K_0}(\L)^F=\n_{\K_0}(\L_0)^F$, $G:=\G^F$, $X:=(\G^F\rtimes \aut_\mathbb{F}(\G^F))_\K$, $\mathcal{S}:=\E(\K_0^F,(\L_0,\lambda_0))$, $\mathcal{S}_0:=\irr(\n_{\K_0}(\L_0)^F\mid \lambda_0)$, $V:=(\G^F\rtimes \aut_\mathbb{F}(\G^F))_{\K,\mathcal{S}}$ and $U:=(\G^F\rtimes \aut_\mathbb{F}(\G^F))_{\K,\L,\mathcal{Y}_0}$. Observe that the condition on defect groups required by \cite[Proposition 6.1]{Ros-Generalized_HC_theory_for_Dade} is satisfied by Lemma \ref{lem:Defect groups}.
\end{proof}

\subsection{Above $e$-Harish-Chandra series}
\label{sec:above}

We now further extend Theorem \ref{thm:Main Parametrisation for simple groups} by lifting the character bijections from Theorem \ref{thm:Parametrisation for reductive groups} with respect to normal inclusions.

\begin{prop}
\label{prop:iEBC going up}
Consider the setup of Theorem \ref{thm:Parametrisation for reductive groups} and let $\K^F\leq H\leq \n_\G(\K)^F$. Then, there exists a defect preserving $\aut_\mathbb{F}(\G^F)_{H,\K,(\L,\lambda)}$-equivariant bijection
 \[\Omega_{(\L,\lambda)}^{\K,H}:\irr\left(H\hspace{1pt}\middle|\hspace{1pt} \E\left(\K^F,(\L,\ps_\K(\lambda))\right)\right)\to\irr\left(\n_H(\L)\hspace{1pt}\middle|\hspace{1pt} \ps_\K(\lambda)\right)\]
such that
\[\left(\n_X(H)_\chi,H,\chi\right)\iso{H}\left(\n_X(H,\L)_\chi,\n_H(\L),\psi\right)\]
for every $\chi\in\irr(H\mid\E(\K^F,(\L,\ps_\K(\lambda))))$ and where $X:=(\G^F\rtimes \aut_\mathbb{F}(\G^F))_\K$.
\end{prop}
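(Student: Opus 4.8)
The plan is to lift the parametrisation $\Omega_{(\L,\lambda)}^\K$ of Theorem~\ref{thm:Parametrisation for reductive groups} along the normal inclusion $\K^F\unlhd H$ by exactly the mechanism used in the proof of that theorem, i.e. the general lifting result for $N$-block isomorphisms of character triples \cite[Proposition 6.1 and Remark 6.2]{Ros-Generalized_HC_theory_for_Dade} (which rests on \cite[Theorem 4.1]{Spa17} and the Clifford correspondence). First I would record the normal inclusions and the acting groups. Since $\K\unlhd\n_\G(\K)$ one has $\K^F\unlhd H$, and intersecting with $\n_\G(\L)^F$ gives $\n_\K(\L)^F\unlhd\n_H(\L)$; moreover $H\leq X$ and the groups $X=(\G^F\rtimes\aut_\mathbb{F}(\G^F))_\K$, $\n_X(\L)$, $\n_X(H)$, $\n_X(H,\L):=\n_X(H)\cap\n_X(\L)$ normalise $\K^F$, $\n_\K(\L)^F$, $H$, $\n_H(\L)$ respectively. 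I would also record the elementary equality $\irr(\n_H(\L)\mid\ps_\K(\lambda))=\irr(\n_H(\L)\mid\irr(\n_\K(\L)^F\mid\ps_\K(\lambda)))$, coming from transitivity of ``lying above'' along $\L^F\leq\n_\K(\L)^F\leq\n_H(\L)$, so that the target of the desired bijection is the set of characters of $\n_H(\L)$ lying above the image of $\Omega_{(\L,\lambda)}^\K$.

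The input to the machinery is the content of Theorem~\ref{thm:Parametrisation for reductive groups}: the defect-preserving, $\aut_\mathbb{F}(\G^F)_{\K,(\L,\lambda)}$-equivariant bijection $\Omega_{(\L,\lambda)}^\K\colon\mathcal{S}\to\mathcal{S}_0$ with $\mathcal{S}=\E(\K^F,(\L,\ps_\K(\lambda)))$ and $\mathcal{S}_0=\irr(\n_\K(\L)^F\mid\ps_\K(\lambda))$, together with its attached $\K^F$-block isomorphisms $(X_\chi,\K^F,\chi)\iso{\K^F}(\n_{X_\chi}(\L),\n_\K(\L)^F,\Omega_{(\L,\lambda)}^\K(\chi))$. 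To apply \cite[Proposition 6.1]{Ros-Generalized_HC_theory_for_Dade} I would verify its hypotheses in this situation. The defect-group condition is supplied by Lemma~\ref{lem:Defect groups}: with $E:=\z(\L)_\ell^F$ one has $\L=\c_\G^\circ(E)$ by \cite[Proposition 3.3 (ii)]{Cab-Eng94}, whence $\c_X(\K^F)\leq\c_X(E)\leq\n_X(\L)$ and in particular $\c_{\n_X(H)}(H)\leq\n_X(H,\L)$, which makes the relations produced genuine $H$-block isomorphisms in the sense of \cite[Remark 3.7]{Spa17}; the well-definedness of the induced blocks and the $\L^F$-regularity needed along the way come from \cite[Theorem B]{Kos-Spa15} and Lemma~\ref{lem:Block induction and regularity}. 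One also needs, and obtains as in Lemma~\ref{lem:Frattini with inertial subgroups}, the Frattini compatibility of stabilisers: using the $\K^F$-conjugacy of unipotent $e$-cuspidal pairs (\cite[Theorem 3.2 (1)]{Bro-Mal-Mic93} inside $\K$, with \cite[(8.20)]{Cab-Eng04} and the uniqueness of the unipotent part \cite[Proposition 8.26]{Cab-Eng04} to absorb the twist by $\ps_\K(\lambda)$), any element of $H$ or $\n_X(H)$ fixing $\chi_0\in\mathcal{S}$ can be corrected by an element of $\n_\K(\L)^F\leq\n_H(\L)$ so as to stabilise $(\L,\lambda)$, and equivariance of $\Omega_{(\L,\lambda)}^\K$ transfers the stabiliser to the $\psi_0$-side.

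Granting these checks, \cite[Proposition 6.1 and Remark 6.2]{Ros-Generalized_HC_theory_for_Dade} applied with $A:=\G^F\rtimes\aut_\mathbb{F}(\G^F)$, $A_0:=\n_A(\L)$, $K:=\K^F$, $K_0:=\n_\K(\L)^F$, $G:=\G^F$, $X:=(\G^F\rtimes\aut_\mathbb{F}(\G^F))_\K$ and with $\mathcal{S},\mathcal{S}_0$ as above (the acting groups then being intersected with $\n_A(H)$), in complete analogy with the proof of Theorem~\ref{thm:Parametrisation for reductive groups}, produces the $\aut_\mathbb{F}(\G^F)_{H,\K,(\L,\lambda)}$-equivariant bijection $\Omega_{(\L,\lambda)}^{\K,H}\colon\irr(H\mid\E(\K^F,(\L,\ps_\K(\lambda))))\to\irr(\n_H(\L)\mid\ps_\K(\lambda))$ together with the required $H$-block isomorphisms $(\n_X(H)_\chi,H,\chi)\iso{H}(\n_X(H,\L)_\chi,\n_H(\L),\psi)$. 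The bijection is defect-preserving because $\Omega_{(\L,\lambda)}^\K$ is and because the Clifford correspondences along $\K^F\unlhd H$ and $\n_\K(\L)^F\unlhd\n_H(\L)$ shift $\ell$-defects only by the $\ell$-parts of the indices $|H:H_{\chi_0}|$ and $|\n_H(\L):\n_H(\L)_{\psi_0}|$, which agree by the Frattini comparison.

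The main obstacle is the bookkeeping of the Frattini/Clifford step: the passage up along $\K^F\unlhd H$ and the passage to normalisers on the local side must be run in parallel so that Clifford correspondents match and the composite of the lifted $\iso{H}$-relation with the character-triple isomorphisms of Theorem~\ref{thm:Parametrisation for reductive groups} remains an $H$-block isomorphism. Since this is precisely what the cited general proposition is built to do, the real work lies in confirming that its group- and block-theoretic hypotheses hold here, which is what the verifications above accomplish.
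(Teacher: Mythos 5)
Your proposal is correct and follows essentially the same route as the paper: apply \cite[Proposition 6.1 and Remark 6.2]{Ros-Generalized_HC_theory_for_Dade} to the bijection of Theorem~\ref{thm:Parametrisation for reductive groups}, with the same assignment of groups (in particular $X=\n_A(\K)=(\G^F\rtimes\aut_\mathbb{F}(\G^F))_\K$), using Lemma~\ref{lem:Defect groups} for the defect-group requirement and \cite[Theorem 3.2 (1)]{Bro-Mal-Mic93} for the conjugacy/Frattini conditions. The additional verifications you spell out (regularity via Lemma~\ref{lem:Block induction and regularity}, well-definedness of induced blocks via \cite[Theorem B]{Kos-Spa15}, defect preservation via matching Clifford indices) are exactly the checks implicitly delegated to the cited proposition.
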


\begin{proof}
We apply \cite[Proposition 6.1]{Ros-Generalized_HC_theory_for_Dade} to the bijection given by Theorem \ref{thm:Parametrisation for reductive groups}. We consider $A:=\G^F\rtimes \aut_\mathbb{F}(\G^F)$, $G:=\G^F$, $K:=\K^F$, $A_0:=\n_A(\L)$, $X:=\n_A(\K)$, $\mathcal{S}:=\E(\K^F,(\L,\ps_\K(\lambda)))$, $\mathcal{S}_0:=\irr(\n_\K(\L)^F\mid \ps_\K(\lambda))$, $U:=X_{0,\lambda}$, $V:=X_{\mathcal{S}}$ and $J:=H$. Notice that the conditions (i)-(iii) of \cite[Proposition 6.1]{Ros-Generalized_HC_theory_for_Dade} are satisfied by \cite[Theorem 3.2 (1)]{Bro-Mal-Mic93}. Furthermore, the requirements about defect groups are satisfied by Lemma \ref{lem:Defect groups}. Therefore, as explained in \cite[Proposition 6.11]{Ros-Generalized_HC_theory_for_Dade}, we obtain the claimed result by applying \cite[Proposition 6.1 and Remark 6.2]{Ros-Generalized_HC_theory_for_Dade}.
\end{proof}

Before proceeding further, we point out an interesting analogy with another important character correspondence. The Glauberman correspondence plays a fundamental role in the study of the local-global counting conjectures and lies at the heart of most reduction theorems. In its most basic form, it states that for every finite $\ell$-group $L$ acting on a finite $\ell'$-group $K$, there exists a bijection
\[f_L:\irr_L(K)\to\irr(\n_K(L))\]
between the set of $L$-invariant characters of $K$ and the characters of the normaliser $\n_K(L)$ (see, for instance, \cite[Section 2.3]{Nav18}). A very deep result due to Dade \cite{Dad80} and recently reproved by Turull \cite{Tur08I}, shows that, if $K$ and $L$ are subgroups of a finite group $G$ and $KP\leq H\leq K\n_G(L)$, then the Glauberman correspondence $f_L$ can be lifted to a character correspondence for $H$, that is, there exists a bijection
\begin{equation}
\label{eq:Above Glauberman}
f_L^H:\irr\left(H\enspace\middle|\enspace\chi\right)\to\irr\left(\n_H(L)\enspace\middle|\enspace f_L(\chi)\right)
\end{equation}
for every $\chi\in\irr_L(K)$. On the other hand, the parametrisation of unipotent $e$-Harish-Chandra series obtained by Brou\'e, Malle and Michel \cite[Theorem 3.2]{Bro-Mal-Mic93} lies at the centre of the proofs of the local-global counting conjectures for finite reductive groups. It is interesting to note that our methods yield a character bijection above $e$-Harish-Chandra series which is analogous to \eqref{eq:Above Glauberman} in the context of the Glauberman correspondence. This is an immediate consequence of Proposition \ref{prop:iEBC going up}.

\begin{cor}
\label{cor:iEBC going up}
Consider the setup of Theorem \ref{thm:Parametrisation for reductive groups} and let $\K^F\leq H\leq \n_\G(\K)^F$. Then, there exists a bijection
\[\Psi_\chi^H:\irr\left(H\enspace\middle|\enspace \chi\right)\to\irr\left(\n_H(\L)\enspace\middle|\enspace\Omega_{(\L,\lambda)}^\K(\chi)\right)\]
for every $\chi\in\E(\K^F,(\L,\ps_\K(\lambda)))$.
\end{cor}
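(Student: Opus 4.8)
The plan is to read off the desired bijection by restricting the map $\Omega^{\K,H}_{(\L,\lambda)}$ of Proposition \ref{prop:iEBC going up} to the characters lying over the fixed character $\chi$. Fix $\chi\in\E(\K^F,(\L,\ps_\K(\lambda)))$ and set $\psi:=\Omega^\K_{(\L,\lambda)}(\chi)\in\irr(\n_\K(\L)^F\mid\ps_\K(\lambda))$. Since $\K^F$ is normal in $H$ and $\chi\in\irr(\K^F)$, the set $\irr(H\mid\chi)$ is a subset of the domain $\irr(H\mid\E(\K^F,(\L,\ps_\K(\lambda))))$ of $\Omega^{\K,H}_{(\L,\lambda)}$; likewise, because $\psi$ lies over some character in $\ps_\K(\lambda)$ and $\n_\K(\L)^F\unlhd\n_H(\L)$, the set $\irr(\n_H(\L)\mid\psi)$ is a subset of the codomain $\irr(\n_H(\L)\mid\ps_\K(\lambda))$. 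Moreover, by Clifford theory both the domain and the codomain decompose as disjoint unions of such pieces: the domain as $\coprod_{[\chi]}\irr(H\mid\chi)$ over representatives of the $H$-orbits of characters in $\E(\K^F,(\L,\ps_\K(\lambda)))$, and the codomain as $\coprod_{[\psi]}\irr(\n_H(\L)\mid\psi)$ over representatives of the $\n_H(\L)$-orbits of characters in $\irr(\n_\K(\L)^F\mid\ps_\K(\lambda))$.

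The one point to verify is that $\Omega^{\K,H}_{(\L,\lambda)}$ carries the piece $\irr(H\mid\chi)$ onto the piece $\irr(\n_H(\L)\mid\psi)$. This is forced by the construction used in the proof of Proposition \ref{prop:iEBC going up}: there $\Omega^{\K,H}_{(\L,\lambda)}$ is produced from $\Omega^\K_{(\L,\lambda)}$ by means of \cite[Proposition 6.1 and Remark 6.2]{Ros-Generalized_HC_theory_for_Dade}, and that construction sits above $\Omega^\K_{(\L,\lambda)}$ in the sense that a character $\chi'$ in the domain lies over $\chi$ precisely when its image $\Omega^{\K,H}_{(\L,\lambda)}(\chi')$ lies over $\Omega^\K_{(\L,\lambda)}(\chi)=\psi$. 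Equivalently, the $H$-block isomorphism of character triples $\bigl(\n_X(H)_{\chi'},H,\chi'\bigr)\iso{H}\bigl(\n_X(H,\L)_{\chi'},\n_H(\L),\Omega^{\K,H}_{(\L,\lambda)}(\chi')\bigr)$ furnished by Proposition \ref{prop:iEBC going up} restricts, over the normal subgroups $\K^F$ and $\n_\K(\L)^F$, to the $\K^F$-block isomorphism $\bigl(X_\chi,\K^F,\chi\bigr)\iso{\K^F}\bigl(\n_{X_\chi}(\L),\n_\K(\L)^F,\psi\bigr)$ of Theorem \ref{thm:Parametrisation for reductive groups}; in particular $\Omega^{\K,H}_{(\L,\lambda)}(\chi')$ lies over $\psi$. (The compatibility of the orbit decompositions on the two sides is built into this, using the Frattini-type argument from \cite[Theorem 3.2 (1)]{Bro-Mal-Mic93} that already underlies Proposition \ref{prop:iEBC going up}.) Hence the restriction of $\Omega^{\K,H}_{(\L,\lambda)}$ to $\irr(H\mid\chi)$ is a bijection onto $\irr(\n_H(\L)\mid\psi)$, and we may take $\Psi^H_\chi$ to be this restriction.

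I expect the whole argument to be bookkeeping rather than substance: everything nontrivial is already contained in Proposition \ref{prop:iEBC going up}, and the only thing to articulate is the compatibility of the two layers of character-triple isomorphisms, namely that the correspondence over the larger normal subgroup $\K^F$ is recovered by restricting the correspondence constructed over the $\n_\G(\K)^F$-intermediate groups. Since the statement of the corollary claims neither equivariance nor block-theoretic compatibility — only the bare existence of a bijection — no input beyond the existence of the pieces of $\Omega^{\K,H}_{(\L,\lambda)}$ is needed, which is why the result is recorded as an immediate consequence of Proposition \ref{prop:iEBC going up}.
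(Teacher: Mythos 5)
Your proposal matches the paper's approach: the paper's proof is a one-line appeal to ``following the construction made in \cite[Proposition 6.1]{Ros-Generalized_HC_theory_for_Dade}'', and your argument spells out exactly what that construction yields — namely that $\Omega^{\K,H}_{(\L,\lambda)}$ is assembled by gluing piecewise bijections of the form $\irr(H\mid\chi)\to\irr(\n_H(\L)\mid\Omega^\K_{(\L,\lambda)}(\chi))$ over a transversal of the $H$-orbits, so the desired $\Psi^H_\chi$ is recovered by restriction.

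One small caveat on the middle part of your justification: the claim that the $H$-block isomorphism $(\n_X(H)_{\chi'},H,\chi')\iso{H}(\n_X(H,\L)_{\chi'},\n_H(\L),\Omega^{\K,H}_{(\L,\lambda)}(\chi'))$ ``restricts over the normal subgroups $\K^F$ and $\n_\K(\L)^F$'' to the $\K^F$-block isomorphism of Theorem \ref{thm:Parametrisation for reductive groups} is not a well-posed operation — a $G$-block isomorphism of character triples is a relation between two triples with projective-representation data, not a functor that can be evaluated on smaller normal subgroups. The fact that $\Omega^{\K,H}_{(\L,\lambda)}(\chi')$ lies over $\psi=\Omega^\K_{(\L,\lambda)}(\chi)$ does not follow from the \emph{statement} of Proposition \ref{prop:iEBC going up} plus the existence of the two triple isomorphisms; it follows because, in the \emph{proof} of \cite[Proposition 6.1]{Ros-Generalized_HC_theory_for_Dade}, the bijection above $H$ is produced from the $\K^F$-block isomorphism $(X_\chi,\K^F,\chi)\iso{\K^F}(\n_{X_\chi}(\L),\n_\K(\L)^F,\psi)$ via Clifford correspondence with respect to $H_\chi\leq X_\chi$, so landing over $\psi$ is built into the definition of the piece map. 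You implicitly appeal to this (``the construction used in the proof'', ``built into this''), which is the right instinct, but the way it is phrased suggests a logical route (from the two triple isomorphisms to the containment) that does not actually exist. Since the paper itself treats this as immediate from the construction, the content of your argument is correct and aligned with the paper; only the framing of the middle step overstates what the character-triple relations alone deliver.
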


\begin{proof}
This follows immediately from the proof of Proposition \ref{prop:iEBC going up} by following the construction made in \cite[Proposition 6.1]{Ros-Generalized_HC_theory_for_Dade}.
\end{proof}

\section{Towards Theorem \ref{thm:Main iUnipotent} and Theorem \ref{thm:Main Unipotent}}
\label{sec:Final proofs}

Finally, we apply the results obtained in the previous sections to prove Theorem \ref{thm:Main iUnipotent} which is our main result. Then, we obtain Theorem \ref{thm:Main Unipotent} as a corollary by applying the $e$-Harish-Chandra theory for unipotent characters developed by Brou\'e, Malle and Michel \cite{Bro-Mal-Mic93} and by Cabanes and Enguehard \cite{Cab-Eng94}. Before doing so, we introduce the relevant notation and prove some preliminary results.

\subsection{Preliminaries on $e$-chains}
\label{sec:e-chains}

Our first aim is to define $e$-local structures for finite reductive groups that play a role analogue to that of $\ell$-chains in the context of Dade's Conjecture and the Character Triple Conjecture. The connection between the set of $e$-chains and that of $\ell$-chains has already been studied in \cite[Section 7.2]{Ros-Generalized_HC_theory_for_Dade}. These results provide a way to obtain Dade's Conjecture and the Character Triple Conjecture as a consequence of \cite[Conjecture C and Conjecture D]{Ros-Generalized_HC_theory_for_Dade}. The possibility to use different types of chains is crucial in the study of Dade's Conjecture and has been introduced by Kn\"orr and Robinson \cite{Kno-Rob89}. Their results were insipred by previous studies conducted by many authors including Brown \cite{Bro75} and Quillen \cite{Qui78} who analised the homotopy theory of associated simplicial complexes.


\begin{defin}
\label{def:e-chains}
We denote by $\CL_e(\G,F)$ the set of \textbf{$e$-chains} of the finite reductive group $(\G,F)$, that is, chains of the form
\[\sigma=\left\lbrace\G=\L_0>\L_1>\dots>\L_n\right\rbrace\]
where $n$ is a non-negative integer and each $\L_i$ is an $e$-split Levi subgroup of $(\G,F)$. We denote by $|\sigma|:=n$ the length of the $e$-chain $\sigma$ and by $\L(\sigma)$ its last term. Furthermore, we define $\CL_e(\G,F)_{>0}$ to be the set of $e$-chains having length strictly larger than $0$.
\end{defin}

Observe that the notion of length defined above, induces a partition of the set $\CL_e(\G,F)$ into $e$-chains of even and odd length. More precisely, we denote by $\CL_e(\G,F)_\pm$ the subset of those $e$-chains $\sigma\in\CL_e(\G,F)$ that satisfy $(-1)^{|\sigma|}=\pm 1$.

In what follows, given an $e$-chain $\sigma$ and an $e$-split Levi subgroup $\M$ of $(\L(\sigma),F)$, we denote by $\sigma+\M$ the $e$-chain obtained by adding $\M$ at the end of $\sigma$. We also allow the possibility that $\M=\L(\sigma)$, in which case we have $\sigma+\L(\sigma)=\sigma$. Vice versa, we denote by $\sigma-\L(\sigma)$ the $e$-chain obtained by removing the last term $\L(\sigma)$ from $\sigma$. In this way we obtain $(\sigma+\M)-\L(\sigma+\M)=\sigma$ where as usual $\L(\sigma+\M)$ denotes the final term of the $e$-chain $\sigma+\M$. Here, we use the convention that $\sigma_0-\L(\sigma_0)=\sigma_0=\sigma_0+\G$ where $\sigma_0=\{\G\}$ is the trivial $e$-chain.

Next, consider the action of $\G^F$ on the set of $e$-chains $\CL_e(\G,F)$ induced by conjugation: for every $g\in \G^F$ and $\sigma=\{\L_i\}_i$, we define
\[\sigma^g:=\left\lbrace\G=\L_0>\L_1^g>\dots>\L_n^g\right\rbrace.\]
It follows from this definition that the stabiliser $\G^F_\sigma$ coincides with the intersection of the normalisers $\n_\G(\L_i)^F$ for $i=1,\dots, n$. Similarly, we can define an action of $\aut_\mathbb{F}(\G^F)$ on $\CL_e(\G,F)$ and give an analogous description of the chains stabilisers $\aut_\mathbb{F}(\G^F)_\sigma$. In particular, notice that the last term of the chain satisfies $\L(\sigma)^F\unlhd \G_\sigma^F$. Using this observation, we can use the results of Section \ref{sec:above} to control the characters of $\G^F_\sigma$ that lie above pseudo-unipotent series of $\L(\sigma)$.

\begin{defin}
\label{def:e-chains character sets}
For every $e$-chain $\sigma\in\CL_e(\G,F)$ we denote by $\CP(\sigma)$ the set of unipotent $e$-cuspidal pairs $(\M,\mu)\in\CP(\L(\sigma),F)$ that satisfy $\M<\G$. Furthermore, for any such pair $(\M,\mu)\in\CP(\sigma)$, we define the character set
\begin{numcases}{\uch(\G^F_\sigma,(\M,\mu)):=}
\irr\left(\G_\sigma^F\enspace\middle|\enspace\E\left(\L(\sigma)^F,\left(\M,\ps_{\L(\sigma)}(\mu)\right)\right)\right)& $\L(\sigma)>\M$\label{case 1}
\\
\irr\left(\G_\sigma^F\enspace\middle|\enspace\E\left(\L(\sigma)^F,\left(\M,\ps_{\L(\sigma-\L(\sigma))}(\mu)\right)\right)\right)& $\L(\sigma)=\M$\label{case 2}
\end{numcases}
\end{defin}

The need to distinguish the cases \eqref{case 1} and \eqref{case 2} will become apparent in the proofs of Proposition \ref{prop:Parametrisation for e-chains stabilisers} and Theorem \ref{thm:iUnipotent} below. Observe that in the definition above, we are excluding the degenerate case where $\G=\L(\sigma)=\M$ and therefore the chain $\sigma-\L(\sigma)$ in the case \eqref{case 2} is always defined. To understand the reason why we are excluding this case, we can consider an analogy with Dade's Conjecture. For every finite group $G$, recall that $\k(G)$ denotes the number of its irreducible characters and that, for any non-negative integer $d$, the symbol $\k^d(G)$ denotes the number of those irreducible characters of $\ell$-defect $d$. The local-global counting conjectures provide a way to determine the global invariants $\k^d(G)$ in terms of $\ell$-local structures. This idea was made precise by Isaacs and Navarro \cite{Isa-Nav22}. According to their definitions, the block-free version of Dade's Conjecture can be stated by saying that the functions $\k^d$ are chain local for every $d>0$. Consequently, and because a sum of chain local functions is chain local, we deduce that the difference $\k-\k^0=\sum_{d>0}\k^d$ is a chain local function. On the other hand, using the fact that groups admitting a character of $\ell$-defect zero have trivial $\ell$-core, it is easy to see that $\k^0$ is not chain local. The exclusion of the case $\G=\L(\sigma)=\M$ can be explained by interpreting these observations in the context of unipotent characters. Recall that $\k_{\rm u}(\G^F)$ and $\k_{\rm c,u}(\G^F)$ denote the number of unipotent characters of $\G^F$ and unipotent $e$-cuspidal characters of $\G^F$ respectively. If $\ell$ does not divide the order of $\z(\G^F)$, then \cite{Cab-Eng94} implies that the unipotent $e$-cuspidal characters of $\G^F$ have defect zero. Therefore, as in the case of Dade's Conjecture, the global invariant we want to determine $e$-locally is the difference $\k_{\rm u}(\G^F)-\k_{\rm c,u}(\G^F)$. Finally, notice that $\k_{\rm c,u}(\G^F)$ is exactly the number of unipotent $e$-cuspidal pairs $(\M,\mu)$ of $\L(\sigma)$ satisfying $\G=\L(\sigma)=\M$.

In the following lemma, we show that if the set $\uch(\G_\sigma^F,(\M,\mu))$ is non-empty then $(\M,\mu)$ is uniquely defined up to $\G_\sigma^F$-conjugation.

\begin{lem}
\label{lem:conjugation problem}
Let $\sigma\in\CL_e(\G,F)$ and consider two unipotent $e$-cuspidal pairs $(\M,\mu)$ and $(\K,\kappa)$ in $\CP(\sigma)$. If the sets $\uch(\G_\sigma^F,(\M,\mu))$ and $\uch(\G_\sigma^F,(\K,\kappa))$ have non-trivial intersection, then $(\M,\mu)$ and $(\K,\kappa)$ are $\G^F_\sigma$-conjugate.
\end{lem}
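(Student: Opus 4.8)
The plan is to reduce the claim to the disjointness of unipotent $e$-Harish-Chandra series in the last term $\L:=\L(\sigma)$, exploiting Clifford theory with respect to the normal inclusion $\L^F\unlhd\G_\sigma^F$. First I would pick a character $\vartheta$ in the intersection $\uch(\G_\sigma^F,(\M,\mu))\cap\uch(\G_\sigma^F,(\K,\kappa))$. By Definition \ref{def:e-chains character sets}, $\vartheta$ lies above some $\chi_1\in\E(\L^F,(\M,\mu\widehat z_\M))$ and some $\chi_2\in\E(\L^F,(\K,\kappa\widehat w_\K))$; here $z$ and $w$ are a priori only in $\z(\mathcal R^*)^{F^*}$ for a Levi subgroup $\mathcal R\supseteq\L$ (namely $\L$ itself in case \eqref{case 1}, or the penultimate term of $\sigma$ in case \eqref{case 2}), but since $\z(\mathcal R^*)^{F^*}\leq\z(\L^*)^{F^*}$ we may treat both cases uniformly and regard $z,w\in\z(\L^*)^{F^*}$. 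As $\L^F\unlhd\G_\sigma^F$, Clifford's theorem yields $g\in\G_\sigma^F$ with $\chi_1^g=\chi_2$.

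Next I would unwind the pseudo-unipotent structure. By the description of pseudo-unipotent $e$-Harish-Chandra series recalled after Definition \ref{def:Pseudo unipotent}, we may write $\chi_1=\varphi_1\widehat z_\L$ with $\varphi_1\in\E(\L^F,(\M,\mu))$ unipotent, and $\chi_2=\varphi_2\widehat w_\L$ with $\varphi_2\in\E(\L^F,(\K,\kappa))$ unipotent. As $g\in\n_\G(\L)^F$, conjugation by $g$ is an automorphism of $\L^F$ induced by an algebraic automorphism: it sends unipotent characters to unipotent characters, permutes the unipotent $e$-cuspidal pairs of $(\L,F)$, and is compatible with duality, so $\chi_1^g=\varphi_1^g\widehat{z'}_\L$ with $\varphi_1^g\in\E(\L^F,(\M^g,\mu^g))$ unipotent and $z'\in\z(\L^*)^{F^*}$ the image of $z$. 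Combining with $\chi_1^g=\chi_2=\varphi_2\widehat w_\L$, I obtain $\varphi_1^g\widehat{z'}_\L=\varphi_2\widehat w_\L$.

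The key step is to strip off the linear characters. Since $z\mapsto\widehat z_\L$ is a group isomorphism $\z(\L^*)^{F^*}\to\irr(\L^F/\tau(\L_{\rm sc}))$, the identity above reads $\varphi_1^g=\varphi_2\widehat y_\L$ with $y:=w(z')^{-1}\in\z(\L^*)^{F^*}$. Hence $\varphi_2\widehat y_\L$ belongs to $\ps_\L(\varphi_2)$ and is unipotent, so by \cite[Proposition 8.26]{Cab-Eng04} it must equal $\varphi_2$, giving $\varphi_1^g=\varphi_2$. Now $\varphi_2=\varphi_1^g$ lies in both of the unipotent $e$-Harish-Chandra series $\E(\L^F,(\M^g,\mu^g))$ and $\E(\L^F,(\K,\kappa))$ of $(\L,F)$; note that $(\M,\mu)$ and $(\K,\kappa)$ lie in $\CP(\L(\sigma),F)$, hence are genuine unipotent $e$-cuspidal pairs of $(\L,F)$, a property preserved under $g$-conjugation. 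By the disjointness of unipotent $e$-Harish-Chandra series \cite[Theorem 3.2 (1)]{Bro-Mal-Mic93}, the pairs $(\M^g,\mu^g)$ and $(\K,\kappa)$ are $\L^F$-conjugate, so there is $h\in\L^F$ with $(\M^{gh},\mu^{gh})=(\K,\kappa)$. Since $\L^F=\L(\sigma)^F\leq\G_\sigma^F$, the element $x:=gh$ lies in $\G_\sigma^F$ and satisfies $(\M,\mu)^x=(\K,\kappa)$, which is the assertion.

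I expect the only genuinely delicate point to be the stripping of linear characters in the third paragraph: one must rule out that multiplication by the characters $\widehat z_\L$ creates a spurious coincidence of $e$-Harish-Chandra series. This is controlled exactly by the uniqueness of the unipotent member of a pseudo-unipotent family together with the disjointness of unipotent $e$-series; the remaining steps are routine bookkeeping with normalisers, conjugation, and the pseudo-unipotent formalism of Section \ref{sec:Pseudo-unipotent}.
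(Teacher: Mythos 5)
Your proposal is correct and follows essentially the same route as the paper's own proof: take a character $\vartheta$ in the intersection, use Clifford's theorem for $\L(\sigma)^F\unlhd\G_\sigma^F$ to obtain a $\G_\sigma^F$-conjugacy between pseudo-unipotent constituents, strip the linear twists via the uniqueness of the unipotent member of a pseudo-unipotent family (\cite[Proposition 8.26]{Cab-Eng04}), and finish with the disjointness of unipotent $e$-Harish-Chandra series from \cite[Theorem 3.2 (1)]{Bro-Mal-Mic93}. The only difference is presentational: you spell out a bit more carefully that the elements $z,w$ may a priori lie in $\z(\mathcal R^*)^{F^*}$ for $\mathcal R\supseteq\L$ and that conjugation by $g\in\n_\G(\L)^F$ is compatible with the pseudo-unipotent formalism, points the paper passes over more briefly.
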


\begin{proof}
Suppose that $\vartheta$ is a character belonging to $\uch(\G_\sigma^F,(\M,\mu))$ and $\uch(\G_\sigma^F,(\K,\kappa))$. If we set $\L:=\L(\sigma)$, then we can find elements $s,t\in\z(\L^*)^{F^*}$ and characters $\varphi\in\E(\L^F,(\M,\mu))$ and $\psi\in\E(\L^F,(\K,\kappa))$ such that $\vartheta$ lies above $\varphi\hat{s}_\L$ and $\psi\hat{t}_\L$. By Clifford's theorem, we deduce that $\varphi\hat{s}_\L=(\psi\hat{t}_\L)^g$ for some $g\in \G_\sigma^F$. Furthermore, since $\hat{s}$ is a linear character, we obtain that $\varphi=\psi^g(\hat{t}_\L)^g(\hat{s}_\L)^{-1}$. Since both $\varphi$ and $\psi^g$ are unipotent characters of $\L^F$, using \cite[Proposition 8.26]{Cab-Eng04} we deduce that $(\hat{t}_\L)^g(\hat{s}_\L)^{-1}=1_\L$ and therefore $\varphi=\psi^g$. But then, \cite[Theorem 3.2(1)]{Bro-Mal-Mic93} shows that $(\M,\mu)$ and $(\K,\kappa)^g$ are $\L^F$-conjugate and the result follows.
\end{proof}

Next, we describe the block theory associated to characters in the sets introduced in Definition \ref{def:e-chains character sets}.

\begin{lem}
\label{lem:blocks in chain stabilisers}
Let $\sigma\in\CL_e(\G,F)$ and consider a unipotent $e$-cuspidal pair $(\M,\mu)\in\CP(\sigma)$ and a character $\vartheta\in\uch(\G_\sigma^F,(\M,\mu))$. Then:
\begin{enumerate}
\item the block $\bl(\vartheta)$ is $\L(\sigma)^F$-regular;
\item if the character $\vartheta$ lies above a given $\varphi\hat{z}_{\L(\sigma)}\in\E(\L(\sigma)^F,(\M,\mu\hat{z}_\M))$ for some $z\in\z(\L(\sigma)^*)^{F^*}$, then we have
\[\bl(\varphi\hat{z}_{\L(\sigma)})=\bl(\mu\hat{z}_\M)^{\L(\sigma)^F}\hspace{15pt}\text{ and }\hspace{15pt}\bl(\vartheta)=\bl(\varphi\hat{z}_{\L(\sigma)})^{\G_\sigma^F}=\bl(\mu\hat{z}_\M)^{\G_\sigma^F}\]
\item the induced block $\bl(\vartheta)^{\G^F}$ is defined.
\end{enumerate}
\end{lem}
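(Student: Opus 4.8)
The plan is to combine the block-theoretic results for pseudo-unipotent characters from Section \ref{sec:Pseudo-unipotent} with the transitivity of Brauer induction, handling the two cases \eqref{case 1} and \eqref{case 2} of Definition \ref{def:e-chains character sets} uniformly. Write $\L:=\L(\sigma)$ and recall that $\L^F\unlhd \G_\sigma^F$ since $\sigma$ stabilises each term of the chain. By definition of $\uch(\G_\sigma^F,(\M,\mu))$, the character $\vartheta$ lies above some $\varphi\hat{z}_\L\in\E(\L^F,(\M,\mu\hat{z}_\M))$ with $\varphi\in\E(\L^F,(\M,\mu))$ and $z\in\z(\L^*)^{F^*}$; in particular $\varphi\hat z_\L$ is a pseudo-unipotent character of $\L^F$ in the sense of Definition \ref{def:Pseudo unipotent}.

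For (i), I would apply Lemma \ref{lem:Block induction and regularity} with $H=\G_\sigma^F$ (note $\L^F\leq \G_\sigma^F\leq \n_\G(\L)^F$ since $\G_\sigma^F\subseteq \n_\G(\L)^F$, and $\ell$ is odd and good for $\G$ by our standing assumptions): since $\vartheta$ lies above the pseudo-unipotent character $\varphi\hat z_\L\in\ps(\L^F)$, the block $\bl(\vartheta)$ is $\L^F$-regular, and moreover $\bl(\vartheta)^{\G_\sigma^F}$ is defined and is the unique block of $\G_\sigma^F$ covering $\bl(\vartheta)$. For (ii), the equality $\bl(\varphi\hat z_\L)=\bl(\mu\hat z_\M)^{\L^F}$ follows from $e$-Harish-Chandra block theory: by \cite[Proposition 3.3 (ii) and Proposition 4.2]{Cab-Eng94} we have $\bl(\varphi)=b_{\L^F}(\M,\mu)=\bl(\mu)^{\L^F}$, and tensoring with the linear character $\hat z_\L$ and using Lemma \ref{lem:Block induction and linear characters}(ii) (with the observation $(\hat z_\M)=(\hat z_\L)_{\M^F}$, which holds because $\M$ is a Levi subgroup of $\L$ and $z\in\z(\L^*)^{F^*}\leq\z(\M^*)^{F^*}$) gives $\bl(\varphi\hat z_\L)=(\bl(\mu)\cdot(\hat z_\L)_{\M^F})^{\L^F}=\bl(\mu\hat z_\M)^{\L^F}$. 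Then, since $\bl(\vartheta)$ is $\L^F$-regular and lies over $\bl(\varphi\hat z_\L)$ (as $\vartheta$ lies over $\varphi\hat z_\L$), \cite[Lemma 9.20 and Theorem 9.19]{Nav98} give $\bl(\vartheta)=\bl(\varphi\hat z_\L)^{\G_\sigma^F}$; combining with the previous identity and transitivity of block induction yields $\bl(\vartheta)=\bl(\mu\hat z_\M)^{\G_\sigma^F}$. Finally (iii): by \cite[Proposition 4.2]{Cab-Eng94} the unipotent block $b_{\L^F}(\M,\mu)$ satisfies $\bl(\mu)^{\L^F}=b_{\L^F}(\M,\mu)$, and since $\bl(\mu\hat z_\M)$ and $\bl(\mu)$ have a common defect group by \cite[Lemma 2.1]{Riz18}, the same radical $\ell$-subgroup argument as in Lemma \ref{lem:Block induction and regularity} shows $\c_{\G^F}(D)\leq\M^F\leq\L^F$ for a suitable defect group $D$ of $\bl(\vartheta)$, so $\bl(\vartheta)$ is $\L^F$-regular as a block relative to $\G^F$ as well, and $\bl(\vartheta)^{\G^F}$ is defined by \cite[Theorem 9.19]{Nav98}.

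The one point requiring care — and the main obstacle — is the case \eqref{case 2} where $\L(\sigma)=\M$: there the pseudo-unipotent twist is taken along $\ps_{\L(\sigma-\L(\sigma))}(\mu)$ rather than $\ps_{\L(\sigma)}(\mu)$, so $z$ a priori lies in $\z(\L(\sigma-\L(\sigma))^*)^{F^*}$, which is a subgroup of $\z(\L^*)^{F^*}$; thus the argument above applies verbatim once one notes that any such $z$ is still an element of $\z(\L^*)^{F^*}$ and that $\varphi=\mu$ in this case, so $\varphi\hat z_\L=\mu\hat z_\M$ and the first identity in (ii) is trivial. I would also remark that the hypothesis $(\M,\mu)\in\CP(\sigma)$ forces $\M<\G$, which together with $Q:=\z(\M)_\ell^F$ and $\M^F=\c_{\G^F}(Q)$ from \cite[Proposition 3.3 (ii)]{Cab-Eng94} is exactly what makes the centraliser estimate in (iii) go through; no essential difficulty beyond bookkeeping arises.
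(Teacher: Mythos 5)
Your treatment of parts (i) and (ii) matches the paper's proof closely: (i) is Lemma \ref{lem:Block induction and regularity} with $H=\G_\sigma^F$; for (ii) you derive $\bl(\varphi)=b_{\L^F}(\M,\mu)=\bl(\mu)^{\L^F}$ from \cite[Propositions 3.3(ii) and 4.2]{Cab-Eng94}, twist by $\hat z_\L$ via Lemma \ref{lem:Block induction and linear characters}, and then pass to $\G_\sigma^F$ via \cite[Theorem 9.19]{Nav98} and transitivity. Your remark that case \eqref{case 2} reduces to $\varphi=\mu$ and that $z$ still lives in $\z(\L(\sigma)^*)^{F^*}$ is correct and is essentially the same observation the paper makes.

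Part (iii) is where the argument breaks. You assert that $\bl(\vartheta)$ is ``\,$\L^F$-regular as a block relative to $\G^F$'' and cite \cite[Theorem 9.19]{Nav98} to conclude that $\bl(\vartheta)^{\G^F}$ is defined. But $\L^F$-regularity, and Theorem 9.19 itself, are formulated only for a normal subgroup $N\unlhd G$ with $N\leq H\leq G$; here $\L(\sigma)^F$ is normal in $\G_\sigma^F$ but is not normal in $\G^F$, so the notion does not apply and the citation is inappropriate. Moreover the centraliser estimate $\c_{\G^F}(D)\leq\M^F$ alone, combined with $D\leq\G_\sigma^F$, does not let you invoke the usual induction criterion \cite[Theorem 4.14]{Nav98} directly for $\G_\sigma^F\leq\G^F$, because that result also requires $\G_\sigma^F\leq\n_{\G^F}(P)$ for the relevant $\ell$-subgroup $P$, and $\G_\sigma^F$ need not normalise $\M$ (hence need not normalise $Q=\z(\M)^F_\ell$) when $\M<\L(\sigma)$. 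The paper instead applies \cite[Theorem 4.14]{Nav98} at the level of $\M^F$, where $Q\c_{\G^F}(Q)=\M^F\leq\n_{\G^F}(Q)$ does hold by \cite[Proposition 3.3(ii)]{Cab-Eng94}, to conclude that $\bl(\mu\hat z_\M)^{\G^F}$ is defined, and then uses part (ii) together with transitivity of block induction to deduce that $\bl(\vartheta)^{\G^F}=(\bl(\mu\hat z_\M)^{\G_\sigma^F})^{\G^F}$ is defined. You should replace your argument for (iii) with this route.
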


\begin{proof}
The first point follows from Lemma \ref{lem:Block induction and regularity} by choosing $\L=\L(\sigma)$ and $H=\G_\sigma^F$. Furthermore, in the case of \eqref{case 2} observe that $\L(\sigma)\leq \L(\sigma-\L(\sigma))$ and hence $\z(\L(\sigma-\L(\sigma))^*)\leq \z(\L(\sigma)^*)$. Therefore, we can always find $\varphi$ and $z$ as in the statement of (ii). Since $\varphi$ is an irreducible constituent of the virtual character $\R_\M^{\L(\sigma)}(\mu)$, it follows from \cite[Proposition 4.2]{Cab-Eng94} (whose assumptions are satisfied by \cite[Proposition 3.3 (ii)]{Cab-Eng94}) that $\bl(\varphi)=b_{\L(\sigma)^F}(\M,\mu)=\bl(\mu)^{\L(\sigma)^F}$. Then, since $\hat{z}_\M$ is the restriction of the linear character $\hat{z}_{\L(\sigma)}$ to $\M^F$, we deduce from Lemma \ref{lem:Block induction and linear characters} that
\[\bl(\varphi\hat{z}_{\L(\sigma)})=\bl(\mu\hat{z}_\M)^{\L(\sigma)^F}.\]
Now, \cite[Theorem 9.19]{Nav98} implies that
\[\bl(\vartheta)=\bl\left(\varphi\hat{z}_{\L(\sigma)}\right)^{\G_\sigma^F}\]
and the second point follows by the transitivity of block induction. Finally, set $Q:=\z(\M)^F_\ell$ and observe that $Q\c_{\G^F}(Q)=\M^F\leq \n_{\G^F}(Q)$ by \cite[Proposition 3.3(ii)]{Cab-Eng94}. Then, \cite[Theorem 4.14]{Nav98} implies that $\bl(\mu\hat{z}_\M)^{\G^F}$ is well defined and so is $\bl(\vartheta)^{\G^F}$ by (ii) and transitivity of block induction. This concludes the proof.
\end{proof}

Using the lemma above, we can now define the following character set. This yields the $e$-local object through which we can determine the number of unipotent characters in a given block of $B$ of $\G^F$ and with a given defect $d\geq 0$ (see Section \ref{sec:Counting}).

\begin{defin}
\label{def:e-chains character sets, blocks}
Let $B$ be a block of $\G^F$ and $d$ a non-negative integer. For every $e$-chain $\sigma\in\CL_e(\G,F)$ and unipotent $e$-cuspidal pair $(\L,\lambda)\in\CP(\sigma)$ we define the character set
\[\uch^d\left(B_\sigma,(\M,\mu)\right):=\left\lbrace\vartheta\in\uch\left(\G_\sigma^F,(\M,\mu)\right)\enspace\middle|\enspace d(\vartheta)=d, \bl(\vartheta)^{\G^F}=B\right\rbrace.\]
where $\bl(\vartheta)^{\G^F}$ is defined according to Lemma \ref{lem:blocks in chain stabilisers} (iii). Furthermore, we denote the cardinality of this set by
\[\k_{\rm u}^d\left(B_\sigma,(\M,\mu)\right):=\left|\uch^d\left(B_\sigma,(\M,\mu)\right)\right|.\]
\end{defin}

To conclude this section, we show that Proposition \ref{prop:iEBC going up} can be used to parametrise the character sets from Definition \ref{def:e-chains character sets, blocks}.

\begin{prop}
\label{prop:Parametrisation for e-chains stabilisers}
Let $B$ be a block of $\G$ and $d$ a non-negative integer. If $\sigma\in\CL_e(\G,F)$ and $(\M,\mu)$ is a unipotent $e$-cuspidal pair in $\CP(\sigma)$ then there exists an $\aut_\mathbb{F}(\G^F)_{B,\sigma,(\M,\mu)}$-equivariant bijection
\[\Omega_{\sigma,(\M,\mu)}^{B,d}:\uch^d\left(B_\sigma,(\M,\mu)\right)\to\uch^d\left(B_{\sigma+\M},(\M,\mu)\right)\]
such that
\[\left(X_{\sigma,\vartheta},\G^F_\sigma,\vartheta\right)\iso{\G^F_\sigma}\left(X_{\sigma+\M,\vartheta},\G^F_{\sigma+\M},\Omega_{\sigma,(\M,\mu)}^{B,d}(\vartheta)\right)\]
for every $\vartheta\in\uch^d(B_\sigma,(\M,\mu))$ and where $X:=\G^F\rtimes \aut_\mathbb{F}(\G^F)$.
\end{prop}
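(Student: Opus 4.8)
The statement compares characters of two $e$-chain stabilisers $\G^F_\sigma$ and $\G^F_{\sigma+\M}$. The key structural observation is that both chains have the same last term in the relevant sense: if $\L(\sigma)>\M$ then $\L(\sigma+\M)=\M$ and $\G^F_{\sigma+\M}=\G^F_\sigma\cap\n_\G(\M)^F=\n_{\G^F_\sigma}(\M)$, which is exactly the normaliser appearing in Proposition \ref{prop:iEBC going up} with $H:=\G^F_\sigma$ and $\K:=\L(\sigma)$, $\L:=\M$. If instead $\L(\sigma)=\M$ then $\sigma+\M=\sigma$ and there is nothing to prove (the bijection is the identity), so the content is entirely in the case $\L(\sigma)>\M$. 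The plan is therefore to reduce to an application of Proposition \ref{prop:iEBC going up} (equivalently Corollary \ref{cor:iEBC going up}) with this dictionary, and then to check that the defect-preservation and block-theoretic bookkeeping match the definitions of $\uch^d(B_\sigma,(\M,\mu))$ and $\uch^d(B_{\sigma+\M},(\M,\mu))$ from Definition \ref{def:e-chains character sets, blocks}.

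First I would set $\K:=\L(\sigma)$, $\L:=\M$, $\lambda:=\mu$ and $H:=\G^F_\sigma$, noting $\K^F=\L(\sigma)^F\unlhd\G^F_\sigma\leq\n_\G(\K)^F$ (the normalisation inclusion holds because every $\G^F_\sigma$-element normalises $\L_n=\L(\sigma)$). Applying Proposition \ref{prop:iEBC going up} gives a defect-preserving $\aut_\mathbb{F}(\G^F)_{\G^F_\sigma,\L(\sigma),(\M,\mu)}$-equivariant bijection
\[
\Omega^{\L(\sigma),\G^F_\sigma}_{(\M,\mu)}:\irr\left(\G^F_\sigma\,\middle|\,\E(\L(\sigma)^F,(\M,\ps_{\L(\sigma)}(\mu)))\right)\to\irr\left(\n_{\G^F_\sigma}(\M)\,\middle|\,\ps_{\L(\sigma)}(\mu)\right)
\]
together with the $\iso{\G^F_\sigma}$-relation on the corresponding character triples $(\n_X(\G^F_\sigma)_\chi,\G^F_\sigma,\chi)$ and $(\n_X(\G^F_\sigma,\M)_\chi,\n_{\G^F_\sigma}(\M),\psi)$. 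Here $\n_X(\G^F_\sigma)=X_\sigma$ and $\n_X(\G^F_\sigma,\M)=X_{\sigma+\M}$ by the description of chain stabilisers in Section \ref{sec:e-chains}, so the character triples are exactly those in the statement. The source of this bijection is $\uch(\G^F_\sigma,(\M,\mu))$ by \eqref{case 1} of Definition \ref{def:e-chains character sets}; the target is $\irr(\G^F_{\sigma+\M}\mid\ps_{\L(\sigma)}(\mu))$, and since $\L(\sigma)=\L((\sigma+\M)-\L(\sigma+\M))$ this is exactly $\uch(\G^F_{\sigma+\M},(\M,\mu))$ by \eqref{case 2} — this is precisely why Definition \ref{def:e-chains character sets} distinguishes the two cases, and checking this identification cleanly is the main place one must be careful.

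Next I would restrict $\Omega^{\L(\sigma),\G^F_\sigma}_{(\M,\mu)}$ to the fibre of defect $d$ and block $B$. Defect is preserved by construction, so the restriction lands among characters of defect $d$. For the block condition, I would argue that $\bl(\vartheta)^{\G^F}=B$ if and only if $\bl(\Omega(\vartheta))^{\G^F}=B$: by Lemma \ref{lem:blocks in chain stabilisers}(ii), if $\vartheta$ lies above $\varphi\hat z_{\L(\sigma)}$ then $\bl(\vartheta)=\bl(\mu\hat z_\M)^{\G^F_\sigma}$, and one checks via the $\G^F_\sigma$-block isomorphism (which preserves blocks over $\G^F_\sigma$, and via $\iso{}$-compatibility with induction to $\G^F$) that $\Omega(\vartheta)$ lies above the same pseudo-unipotent character $\mu\hat z_\M$-series, hence $\bl(\Omega(\vartheta))=\bl(\mu\hat z_\M)^{\G^F_{\sigma+\M}}$ and both induce to the same block of $\G^F$ by transitivity of block induction. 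This shows $\Omega$ restricts to a bijection $\uch^d(B_\sigma,(\M,\mu))\to\uch^d(B_{\sigma+\M},(\M,\mu))$. The equivariance group $\aut_\mathbb{F}(\G^F)_{\G^F_\sigma,\L(\sigma),(\M,\mu)}$ contains $\aut_\mathbb{F}(\G^F)_{B,\sigma,(\M,\mu)}$ since stabilising $\sigma$ forces stabilising $\L(\sigma)=\L_n$ and stabilising $\G^F_\sigma$, so the required equivariance follows. Setting $\Omega^{B,d}_{\sigma,(\M,\mu)}$ to be this restriction completes the proof.

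**Main obstacle.** I expect the genuine difficulty to be the block-theoretic matching — verifying that the condition $\bl(\vartheta)^{\G^F}=B$ transports correctly under $\Omega$. This requires unwinding Lemma \ref{lem:blocks in chain stabilisers}, using that $\G^F$-block isomorphisms of character triples are compatible with Brauer induction to larger groups (this is the whole point of the $\iso{}$ relation), and confirming that $\Omega$ sends a character over the series $\E(\L(\sigma)^F,(\M,\ps_{\L(\sigma)}(\mu)))$ to one over the matching pseudo-unipotent character; the $e$-cuspidal-pair uniqueness from Lemma \ref{lem:conjugation problem} and \cite[Proposition 8.26]{Cab-Eng04} will be needed to pin down the relevant central element $z$. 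The bijection itself and its equivariance are essentially immediate from Proposition \ref{prop:iEBC going up} once the stabiliser dictionary is in place.
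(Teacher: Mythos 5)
Your proof follows the same route as the paper: dispose of the degenerate case $\L(\sigma)=\M$ with the identity map, set $H=\G^F_\sigma$, $\K=\L(\sigma)$, $(\L,\lambda)=(\M,\mu)$, apply Proposition \ref{prop:iEBC going up}, identify the source and target via cases \eqref{case 1} and \eqref{case 2} of Definition \ref{def:e-chains character sets} using $\L((\sigma+\M)-\L(\sigma+\M))=\L(\sigma)$, and then restrict to defect $d$ and block $B$ using that the $\iso{}$-relation is compatible with block induction together with transitivity. This is exactly what the paper does.

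One point you slide past is the identification of overgroups in the character triples. You write $\n_X(\G^F_\sigma)=X_\sigma$ and $\n_X(\G^F_\sigma,\M)=X_{\sigma+\M}$ ``by the description of chain stabilisers''. But the $X$ coming out of Proposition \ref{prop:iEBC going up} is $(\G^F\rtimes\aut_\mathbb{F}(\G^F))_\K=(\G^F\rtimes\aut_\mathbb{F}(\G^F))_{\L(\sigma)}$, and the triple there involves $\n_X(H)_\chi$, the stabiliser of $\chi$ in the normaliser of $\G^F_\sigma$ inside that group. Normalising $\G^F_\sigma$ and stabilising $\L(\sigma)$ does not obviously force an element to stabilise the intermediate Levis $\L_1,\dots,\L_{n-1}$, so this normaliser can a priori be larger than $X_{\sigma,\vartheta}$ (where $X=\G^F\rtimes\aut_\mathbb{F}(\G^F)$). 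The paper does not claim this equality; instead it passes from the $\iso{\G^F_\sigma}$-relation for the ``big'' triples to the ``small'' triples $(X_{\sigma,\vartheta},\G^F_\sigma,\vartheta)$ and $(X_{\sigma+\M,\vartheta},\G^F_{\sigma+\M},\Omega(\vartheta))$ by invoking \cite[Lemma 3.8 (b)]{Spa17}, which allows one to shrink the overgroups in an $N$-block isomorphism of character triples. You should add that step rather than assert the equality. Similarly, your detour through Lemma \ref{lem:blocks in chain stabilisers}(ii) for the block-matching is a valid but roundabout argument: the paper deduces $\bl(\vartheta)^{\G^F}=\bl(\Omega(\vartheta))^{\G^F}$ directly from the block-induction compatibility built into the $\iso{\G^F_\sigma}$-relation plus transitivity, with no need to identify the common $z\in\z(\L(\sigma)^*)^{F^*}$ or invoke Lemma \ref{lem:conjugation problem}. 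Neither of these is a fatal flaw, but the overgroup identification is the one place where your write-up asserts something that requires an additional argument.
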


\begin{proof}
First, observe that if $\M$ coincides with the last term $\L(\sigma)$ of the chain $\sigma$, then we have $\sigma+\M=\sigma$ which implies $\uch^d(B_\sigma,(\M,\mu))=\uch^d(B_{\sigma+\M},(\M,\mu))$. In this case the result holds by defining $\Omega_{\sigma,(\M,\mu)}^{B,d}$ as the identity. Therefore, we can assume that $\M<\L(\sigma)$ and define $\rho:=\sigma+\M$. Now, according to \eqref{case 1} we have
\begin{equation}
\label{prop:Parametrisation for e-chains stabilisers, 1}
\uch\left(\G^F_\sigma,(\M,\mu)\right)=\irr\left(\G_\sigma^F\enspace\middle|\enspace\E\left(\L(\sigma)^F,(\M,\ps_{\L(\sigma)}(\mu))\right)\right).
\end{equation}
On the other hand, noticing that $\M$ coincides with the last term $\L(\rho)$ of the chain $\rho$ and that $\rho-\L(\rho)=\sigma$, we obtain the equality $\E(\L(\rho)^F,(\M,\ps_{\L(\rho-\L(\rho))}(\mu)))=\ps_{\L(\sigma)}(\mu)$. Then, observing that $\G_\rho^F=\n_{\G_\sigma^F}(\M)$, we can apply \eqref{case 2} to obtain the equality
\begin{equation}
\label{prop:Parametrisation for e-chains stabilisers, 2}
\uch\left(\G^F_\rho,(\M,\mu)\right)=\irr\left(\n_{\G_\sigma^F}(\M)\enspace\middle|\enspace\ps_{\L(\sigma)}(\mu))\right).
\end{equation}
Next, we apply Proposition \ref{prop:iEBC going up} by choosing the groups in that statement to be $H=\G_\sigma^F$, $\K=\L(\sigma)$ and $(\L,\lambda)=(\M,\mu)$. By \eqref{prop:Parametrisation for e-chains stabilisers, 1} and \eqref{prop:Parametrisation for e-chains stabilisers, 2}, we deduce that there exists an $\aut_\mathbb{F}(\G^F)_{\sigma,(\M,\mu)}$-equivariant bijection
\begin{equation}
\label{prop:Parametrisation for e-chains stabilisers, 3}
\Omega^{\L(\sigma),\G_\sigma^F}_{(\M,\mu)}:\uch(\G^F_\sigma,(\M,\mu))\to\uch(\G_\rho^F,(\M,\mu)).
\end{equation}
Moreover, using the $H$-block isomorphisms given by Proposition \ref{prop:iEBC going up} together with \cite[Lemma 3.8 (b)]{Spa17}, we deduce that
\begin{equation}
\label{prop:Parametrisation for e-chains stabilisers, 4}
\left(X_{\sigma,\vartheta},\G^F_\sigma,\vartheta\right)\iso{\G^F_\sigma}\left(X_{\rho,\vartheta},\G^F_{\rho},\Omega_{(\M,\mu)}^{\L(\sigma),\G_\sigma^F}(\vartheta)\right)
\end{equation}
for every $\vartheta\in\uch^d(\G_\sigma^F,(\M,\mu))$. To conclude, observe first that $\Omega^{\L(\sigma),\G_\sigma^F}_{(\M,\mu)}$ sends characters of defect $d$ to characters of defect $d$. Moreover, by the transitivity of block induction and using \eqref{prop:Parametrisation for e-chains stabilisers, 4}, we deduce that
\[\bl(\vartheta)^{\G^F}=\bl\left(\Omega^{\L(\sigma),\G_\sigma^F}_{(\M,\mu)}(\vartheta)\right)^{\G^F}.\]
This shows that the bijection from \eqref{prop:Parametrisation for e-chains stabilisers, 3} sends characters in the set $\uch^d(B_\sigma,(\M,\mu))$ to characters in the set $\uch^d(B_{\sigma+\M},(\M,\mu))$ and therefore it restricts to a bijection, denoted by $\Omega^{B,d}_{\sigma,(\M,\mu)}$, satisfying the properties required in the statement. This completes the proof.
\end{proof}

We conclude this section with a remark on the isomorphisms of character triples obtained in Proposition \ref{prop:Parametrisation for e-chains stabilisers}.

\begin{rmk}
\label{rmk:Condition on prime}
Suppose that $\ell$ does not divide $q\pm 1$ if $\G$ is of type ${\bf A}(\pm q)$. In this case, every $e$-split Levi subgroup $\L$ of $\G$ satisfies $\L=\c_\G^\circ(\z(\L)^F_\ell)$ according to \cite[Proposition 13.19]{Cab-Eng04}. This fact can be used to show that the $\G^F_\sigma$-block isomorphisms of character triples given by Proposition \ref{prop:Parametrisation for e-chains stabilisers} can be extended to $\G^F$-block isomorphisms of character triples. First, we claim that 
\begin{equation}
\label{eq:Condition on prime}
\c_{\G^FX_{\sigma,\vartheta}}(D)\leq X_{\sigma,\vartheta}
\end{equation}
for every irreducible character $\vartheta$ of $\G^F_\sigma$ and every $\ell$-radical subgroup $D$ of $\G_{\sigma+\M}^F$. Define $Q_i:=\z^\circ(\L_i)_\ell^F$ for every $e$-split Levi subgroup $\L_i$ appearing in the chain $\sigma$. Then, using the fact that $D$ is $\ell$-radical, we obtain the inclusions $Q_i\leq \O_\ell(\G_\sigma^F)\leq D$. Therefore, every element $x\in\G^FX_{\sigma,\vartheta}$ that centralises $D$ centralises also each $Q_i$ and hence normalises each $\L_i$. It follows that
\[\c_{\G^FX_{\sigma,\vartheta}}(D)\leq (\G^FX_{\sigma,\vartheta})_\sigma=X_{\sigma,\vartheta}\]
as required by \eqref{eq:Condition on prime}. We can now apply \cite[Lemma 2.11]{Ros22} to the $\G^F_\sigma$-block isomorphisms given by Proposition \ref{prop:Parametrisation for e-chains stabilisers} to show that 
\[\left(X_{\sigma,\vartheta},\G^F_\sigma,\vartheta\right)\iso{\G^F}\left(X_{\sigma+\M,\vartheta},\G^F_{\sigma+\M},\Omega_{\sigma,(\M,\mu)}^{B,d}(\vartheta)\right)\]
for every $\vartheta\in\uch^d(B_\sigma,(\M,\mu))$.
\end{rmk}

\subsection{Proof of Theorem \ref{thm:Main iUnipotent}}
\label{sec:Proof of iUnipotent}

We are finally ready to prove our main theorem which provides a bijection for unipotent characters in the spirit of the Character Triple Conjecture \cite[Conjecture 6.3]{Spa17}. In this section, we prove a slightly stronger result that provides further information on the type of $e$-chains and isomorphisms of character triples. In the following definition we introduce the analogue of the set $\C^d(B)_\pm$ considered in the Character Triple Conjecture as defined in \cite[p. 1097]{Spa17}.

\begin{defin}
\label{def:CTC set for e-structures}
Let $B$ be a block of $\G^F$ and consider a non-negative integer $d$. We define the set
\[\CL^d_{\rm u}(B)_\pm=\left\lbrace(\sigma,\M,\mu,\vartheta)\enspace\middle|\enspace\sigma\in\CL_e(\G,F)_\pm, (\M,\mu)\in\CP(\sigma),\vartheta\in\uch^d\left(B_\sigma,(\M,\mu)\right)\right\rbrace.\]
The conjugacy action of $\G^F$ induces an action of $\G^F$ on $\CL^d_{\rm u}(B)_\pm$ defined by $(\sigma,\M,\mu,\vartheta)^g:=\left(\sigma^g,\M^g,\mu^g,\vartheta^g\right)$ for every element $g\in \G^F$ and $(\sigma,\M,\mu,\vartheta)\in\CL^d_{\rm u}(B)_\pm$. We denote by $\CL^d_{\rm u}(B)_\pm/\G^F$ the corresponding set of $\G^F$-orbits of tuples. Moreover, for every such orbit $\omega$, we denote by $\omega^\bullet$ the corresponding $\G^F$-orbit of pairs $(\sigma,\vartheta)$ such that $(\sigma,\M,\mu,\vartheta)\in\omega$ for some $(\M,\mu)\in\CP(\sigma)$. In other words, if we indicate by $\overline{(\sigma,\M,\mu,\vartheta)}$ the $\G^F$-orbit of $(\sigma,\M,\mu,\vartheta)$, then $\overline{(\sigma,\M,\mu,\vartheta)}^\bullet$ is the $\G^F$-orbit of the pairs $(\sigma^g,\vartheta^g)$. 
\end{defin}

In a similar way, if $\aut_\mathbb{F}(\G^F)_B$ denotes the set of those automorphisms $\alpha\in\aut_\mathbb{F}(\G^F)$ that stabilise $B$, then we can define $(\sigma,\M,\mu,\vartheta)^\alpha:=\left(\sigma^\alpha,\M^\alpha,\mu^\alpha,\vartheta^\alpha\right)$ for every $\alpha\in\aut_\mathbb{F}(\G^F)_B$ and $(\sigma,\M,\mu,\vartheta)\in\CL_{\rm u}^d(B)$. In this way, we obtain an action of the group $\aut_\mathbb{F}(\G^F)_B$ on the set $\CL_{\rm u}^d(B)_\pm$ and on the corresponding set of orbits $\CL_{\rm u}^d(B)_\pm/\G^F$.

\begin{theo}
\label{thm:iUnipotent}
For every block $B$ of $\G^F$ and every non-negative integer $d$, there exists an $\aut_\mathbb{F}(\G^F)_B$-equivariant bijection
\[\Lambda:\CL^d_{\rm u}(B)_+/\G^F\to\CL^d_{\rm u}(B)_-/\G^F.\]
Moreover, for every $\omega\in\CL^d_{\rm u}(B)_+/\G^F$, any $(\sigma,\vartheta)\in\omega^\bullet$ and any $(\rho,\chi)\in\Lambda(\omega)^\bullet$ we have
\[|\sigma|=|\rho|\pm 1\]
and
\[\left(X_{\sigma,\vartheta},\G^F_\sigma,\vartheta\right)\iso{J}\left(X_{\rho,\chi},\G^F_\rho,\chi\right)\]
with $J=\G^F_\sigma$, if $|\sigma|=|\rho|-1$, or $J=\G_\rho^F$, if $|\sigma|=|\rho|+1$, and where $X:=\G^F\rtimes \aut_\mathbb{F}(\G^F)$.
\end{theo}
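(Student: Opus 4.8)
The plan is to realise $\Lambda$ as an explicit sign-reversing involution on $\CL^d_{\rm u}(B)_+\sqcup\CL^d_{\rm u}(B)_-$ — the $e$-analogue of the Kn\"orr--Robinson cancellation pairing — and then to read off the character triple isomorphisms directly from Proposition \ref{prop:Parametrisation for e-chains stabilisers}.

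First I would fix a coherent family of the bijections supplied by Proposition \ref{prop:Parametrisation for e-chains stabilisers}. For each orbit, under $X_B$ with $X:=\G^F\rtimes\aut_\mathbb{F}(\G^F)$, of triples $(\sigma,\M,\mu)$ with $\sigma\in\CL_e(\G,F)$, $(\M,\mu)\in\CP(\sigma)$ and $\L(\sigma)>\M$, I would pick a representative together with its bijection $\Omega_{\sigma,(\M,\mu)}^{B,d}$ and transport it along the orbit by conjugation; the local $\aut_\mathbb{F}(\G^F)_{B,\sigma,(\M,\mu)}$-equivariance in that proposition makes the transported map independent of the chosen conjugating element, so the resulting family $\{\Omega_{\sigma,(\M,\mu)}^{B,d}\}$ is $X_B$-equivariant. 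Then I would define a map $\Phi$ on quadruples by two clauses: if $(\M,\mu)\in\CP(\sigma)$ satisfies $\L(\sigma)>\M$, set
\[\Phi(\sigma,\M,\mu,\vartheta):=\bigl(\sigma+\M,\ \M,\ \mu,\ \Omega_{\sigma,(\M,\mu)}^{B,d}(\vartheta)\bigr),\]
while if $\L(\sigma)=\M$ — which forces $|\sigma|\geq 1$ since $\M<\G$, so $\sigma-\L(\sigma)$ is defined and satisfies $\L(\sigma-\L(\sigma))>\M$ — set
\[\Phi(\sigma,\M,\mu,\vartheta):=\bigl(\sigma-\L(\sigma),\ \M,\ \mu,\ (\Omega_{\sigma-\L(\sigma),(\M,\mu)}^{B,d})^{-1}(\vartheta)\bigr).\]
Using $(\sigma+\M)-\L(\sigma+\M)=\sigma$ and $(\sigma-\L(\sigma))+\M=\sigma$, and the fact that each clause maps a configuration into one on which the other clause applies (with the two occurrences of the same bijection cancelling), one checks $\Phi^2=\mathrm{id}$. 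Since the length of the $e$-chain changes by exactly $\pm1$, $\Phi$ carries $\CL^d_{\rm u}(B)_\pm$ into $\CL^d_{\rm u}(B)_\mp$; it lands in the correct character set because $\Omega_{\sigma,(\M,\mu)}^{B,d}$ preserves the defect $d$ and the block $B$; and it commutes with the $X_B$-action by equivariance of the family together with equivariance of the operations $\sigma\mapsto\sigma+\M$ and $\sigma\mapsto\sigma-\L(\sigma)$. Hence $\Phi$ restricts to mutually inverse, $X_B$-equivariant bijections between $\CL^d_{\rm u}(B)_+$ and $\CL^d_{\rm u}(B)_-$; passing to $\G^F$-orbits gives the required $\aut_\mathbb{F}(\G^F)_B$-equivariant $\Lambda$, and by Lemma \ref{lem:conjugation problem} the assignment $\omega\mapsto\omega^\bullet$ is injective, so prescribing $\Lambda$ on orbits of quadruples is equivalent to the stated formulation.

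For the remaining assertions I would take $\omega\in\CL^d_{\rm u}(B)_+/\G^F$, a representative quadruple $(\sigma,\M,\mu,\vartheta)$, and form $(\rho,\M,\mu,\chi):=\Phi(\sigma,\M,\mu,\vartheta)$, so that $(\sigma,\vartheta)\in\omega^\bullet$ and $(\rho,\chi)\in\Lambda(\omega)^\bullet$. If $\L(\sigma)>\M$ then $\rho=\sigma+\M$, $\chi=\Omega_{\sigma,(\M,\mu)}^{B,d}(\vartheta)$, so $|\sigma|=|\rho|-1$, and Proposition \ref{prop:Parametrisation for e-chains stabilisers} gives verbatim $(X_{\sigma,\vartheta},\G^F_\sigma,\vartheta)\iso{\G^F_\sigma}(X_{\rho,\chi},\G^F_\rho,\chi)$, i.e.\ $J=\G^F_\sigma$. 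If $\L(\sigma)=\M$ then $\rho=\sigma-\L(\sigma)$, $\rho+\M=\sigma$ and $\vartheta=\Omega_{\rho,(\M,\mu)}^{B,d}(\chi)$, so $|\sigma|=|\rho|+1$, and Proposition \ref{prop:Parametrisation for e-chains stabilisers} applied with $\rho$ in the role of $\sigma$ yields $(X_{\rho,\chi},\G^F_\rho,\chi)\iso{\G^F_\rho}(X_{\sigma,\vartheta},\G^F_\sigma,\vartheta)$, which by symmetry of the equivalence relation $\iso{N}$ is the desired statement with $J=\G^F_\rho$. Independence of the chosen representatives of $\omega^\bullet$ and $\Lambda(\omega)^\bullet$ then follows from the $\G^F$-equivariance of $\Phi$ together with the compatibility of $\iso{N}$ with $\G^F$-conjugation.

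The combinatorial skeleton is essentially forced; the real work lies in the first step, namely upgrading the per-triple bijections of Proposition \ref{prop:Parametrisation for e-chains stabilisers} into a single $\aut_\mathbb{F}(\G^F)_B$-equivariant family that is used coherently "forwards" in the first clause of $\Phi$ and "backwards" in the second, so that $\Phi$ becomes a genuine equivariant involution rather than a bijection pinned down only up to choices. Secondary care will be needed to match the correct normal subgroup $J$ — the stabiliser of the shorter chain, $\G^F_\sigma$ or $\G^F_\rho$ — to the direction of the length change when invoking the symmetry of $\iso{N}$, and to confirm that $X_B$ indeed preserves the sets $\CL^d_{\rm u}(B)_\pm$ (automorphisms preserve $\ell$-defects and fix $B$, while $\G^F$ fixes every block).
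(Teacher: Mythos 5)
Your proof is correct and follows essentially the same route as the paper: a sign-reversing involution on the $e$-chain data driven by $\sigma\mapsto\sigma\pm\M$, with the character-level bijections and the $\iso{\G^F_\sigma}$-isomorphisms supplied by Proposition \ref{prop:Parametrisation for e-chains stabilisers}, Lemma \ref{lem:conjugation problem} guaranteeing well-definedness, and the choice of $J$ governed by whether the chain shortens or lengthens. The only (cosmetic) difference is that you build a globally defined $X_B$-equivariant involution $\Phi$ on quadruples by transporting the per-triple bijections along $X_B$-orbits, whereas the paper fixes $A_B$-transversals of triples and of characters and extends equivariantly; these are two standard packagings of the same equivariant-extension argument.
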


\begin{proof}
Define $A:=\aut_\mathbb{F}(\G^F)$ and observe that $X=\G^F\rtimes A$. In a first step, we construct an equivariant bijection between triples of the form $(\sigma,\M,\mu)$. More precisely, let $\mathcal{S}$ denote the set of such triples $(\sigma,\M,\mu)$ with $\sigma\in\CL_e(\G,F)$ and $(\M,\mu)\in\CP(\sigma)$. We define a map
\[\Delta:\mathcal{S}\to\mathcal{S}\]
by setting
\[\Delta\left((\sigma,\M,\mu)\right):=\begin{cases}
\left(\sigma+\M,\M,\mu\right),& \L(\sigma)>\M
\\
\left(\sigma-\M,\M,\mu\right),& \L(\sigma)=\M.
\end{cases}\]
Observe that the chain $\sigma-\M$ is always defined since $\M<\G$ by the definition of $\CP(\sigma)$. Moreover, it is clear from the definition above that the map $\Delta$ is $A$-equivariant and satisfies $\Delta^2={\rm Id}$. Therefore, observing that $|\sigma\pm\M|=|\sigma|\pm 1$, we conclude that $\Delta$ restricts to an $A$-equivariant bijection
\[\Delta:\mathcal{S}_+\to\mathcal{S}_-\]
where $\mathcal{S}_\pm$ denotes the set of those triples $(\sigma,\M,\mu)$ of $\mathcal{S}$ that satisfy $\sigma\in\CL_e(\G,F)_\pm$. Furthermore, notice once again that if $\Delta((\sigma,\M,\mu))=(\rho,\K,\kappa)$, then
\begin{equation}
\label{eq:iUnipotent 2}
|\sigma|=|\rho|\pm 1.
\end{equation}
Now, fix an $A_B$-transversal $\mathcal{T}_+$ in $\mathcal{S}_+$ and observe that the image of $\mathcal{T}_+$ under the map $\Delta$, denoted by $\mathcal{T}_-$, is an $A_B$-transversal in $\mathcal{S}$ because of the equivariance property of $\Delta$. Consider $(\sigma,\M,\mu)\in\mathcal{T}_+$ and write $\Delta((\sigma,\M,\mu))=(\rho,\M,\mu)$. In what follows, we may assume without loss of generality that $\L(\sigma)>\M$ and that $\rho=\sigma+\M$, otherwise we repeat the arguments verbatim by replacing $(\sigma,\M,\mu)$ with $(\rho,\M,\mu)$. By Proposition \ref{prop:Parametrisation for e-chains stabilisers} we obtain an $A_{B,\sigma,(\M,\mu)}$-equivariant bijection
\[\Omega_{\sigma,(\M,\mu)}^{B,d}:\uch^d\left(B_\sigma,(\M,\mu)\right)\to\uch^d\left(B_{\rho},(\M,\mu)\right)\]
such that
\begin{equation}
\label{eq:iUnipotent 3}
\left(X_{\sigma,\vartheta},\G^F_\sigma,\vartheta\right)\iso{\G^F_\sigma}\left(X_{\rho,\chi},\G^F_{\rho},\chi\right)
\end{equation}
for every $\vartheta\in\uch^d(B_\sigma,(\M,\mu))$ and where $\chi$ is the image of $\vartheta$. Consequently, if $\mathcal{U}_{+}^{(\sigma,\M,\mu)}$ is an $A_{B,(\sigma,\M,\mu)}$-transversal in the character set $\uch^d(B_\sigma,(\M,\mu))$, then its image, denoted by $\mathcal{U}_{-}^{(\rho,\M,\mu)}$, under the bijection above is an $A_{B,(\rho,\M,\mu)}$-transversal in the character set $\uch^d(B_\rho,(\M,\mu))$ because $A_{B,(\sigma,\M,\mu)}=A_{B,(\rho,\M,\mu)}$.

Now, by the discussion in the previous paragraph and using Lemma \ref{lem:conjugation problem}, we conclude that the sets of $\G^F$-orbits
\[\mathcal{L}_+:=\left\lbrace\overline{(\sigma,\M,\mu,\vartheta)}\enspace\middle|\enspace (\sigma,\M,\mu)\in\mathcal{T}_+, \vartheta\in\mathcal{U}_+^{(\sigma,\M,\mu)}\right\rbrace\]
and
\[\mathcal{L}_-:=\left\lbrace\overline{(\rho,\M,\mu,\chi)}\enspace\middle|\enspace (\rho,\M,\mu)\in\mathcal{T}_-, \chi\in\mathcal{U}_-^{(\rho,\M,\mu)}\right\rbrace\]
are $A_B$-transversals in the sets $\CL^d_{\rm u}(B)_+/\G^F$ and $\CL^d_{\rm u}(B)_-/\G^F$ respectively. Finally, we can define the bijection $\Lambda$ by setting
\[\Lambda\left(\overline{(\sigma,\M,\mu,\vartheta)}^x\right):=\left(\rho,\M,\mu,\chi\right)^x\]
for every $x\in A_B$ and every $(\sigma,\M,\mu,\vartheta)\in\mathcal{L}_+$ and $(\rho,\M,\mu,\chi)\in\mathcal{L}_-$ satisfying $\Delta(\sigma,\M,\mu)=(\rho,\M,\mu)$ and such that
\[\chi=\begin{cases}\Omega^{B,d}_{\sigma,(\M,\mu)}(\vartheta),& \rho=\sigma+\M
\\
\left(\Omega^{B,d}_{\rho,(\M,\mu)}\right)^{-1}(\vartheta),& \rho=\sigma-\M.
\end{cases}\]
Using \eqref{eq:iUnipotent 2} and \eqref{eq:iUnipotent 3} together with the definition of $\Lambda$, we conclude that the properties required in the statement are satisfied and the proof is now complete.
\end{proof}

Now, as a consequence of Theorem \ref{thm:iUnipotent} and Remark \ref{rmk:Condition on prime}, we can finally prove Theorem \ref{thm:Main iUnipotent}.

\begin{proof}[Proof of Theorem \ref{thm:Main iUnipotent}]
Assume that $\ell$ does not divide $q\pm 1$ whenever $(\G,F)$ is of type ${\bf A}(\pm q)$. Consider the bijection $\Lambda$ from Theorem \ref{thm:iUnipotent} and chose $\omega\in\CL^d_{\rm u}(B)_+/\G^F$, $(\sigma,\vartheta)\in\omega^\bullet$ and $(\rho,\chi)\in\Lambda(\omega)^\bullet$. Then, we have
\[\left(X_{\sigma,\vartheta},\G^F_\sigma,\vartheta\right)\iso{J}\left(X_{\rho,\chi},\G^F_\rho,\chi\right)\]
with $J=\G^F_\sigma$, if $|\sigma|=|\rho|-1$, or $J=\G_\rho^F$, if $|\sigma|=|\rho|+1$. In either cases, applying Remark \ref{rmk:Condition on prime}, we deduce that
\[\left(X_{\sigma,\vartheta},\G^F_\sigma,\vartheta\right)\iso{\G^F}\left(X_{\rho,\chi},\G^F_\rho,\chi\right)\]
as required by Theorem \ref{thm:Main iUnipotent}.
\end{proof}

\subsection{Proof of Theorem \ref{thm:Main Unipotent}}
\label{sec:Counting}

Our final goal is to obtain a counting argument for unipotent characters as a consequence of Theorem \ref{thm:iUnipotent}. Recall that Dade's Conjecture provides a way to determine the number of characters in a given $\ell$-block $B$ and with a given defect $d$ in terms of $\ell$-local structures. Theorem \ref{thm:Main Unipotent} provides an adaptation of this idea to the unipotent characters of finite reductive groups by means of $e$-local structures compatible with $e$-Harish-Chandra theory (see Definition \ref{def:e-chains character sets, blocks}). For every $\sigma\in\CL_e(\G,F)$ we define
\begin{equation}
\label{eq:number of local characters}
\k^d_{\rm u}(B_\sigma):=\sum\limits_{(\M,\mu)}\k_{\rm u}^d(B_\sigma,(\M,\mu))
\end{equation}
where $(\M,\mu)$ runs over a set of representatives for the action of $\G^F_\sigma$ on $\CP(\sigma)$. Moreover, recall that $\k^d_{\rm u}(B)$ and $\k^d_{\rm c, u}(B)$ denote the number of irreducible characters belonging to the block $B$ and with defect $d$ that are unipotent and unipotent $e$-cuspidal respectively.

\begin{proof}[Proof of Theorem \ref{thm:Main Unipotent}]
To start, we determine the cardinality of the sets of $\G^F$-orbits $\CL_{\rm u}^d(B)_\pm/\G^F$. By applying Lemma \ref{lem:conjugation problem}, we obtain
\begin{equation}
\label{eq:counting 1}
\left|\CL_{\rm u}^d(B)_\pm/\G^F\right|=\sum\limits_{\sigma, (\M,\mu)}\k^d_{\rm u}(B_\sigma,(\M,\mu))=\sum\limits_{\sigma}\k^d_{\rm u}(B_\sigma)
\end{equation}
where $\sigma$ runs over a set of representatives, say $\CL_\pm$, for the action of $\G^F$ on $\CL_e(\G,F)_\pm$ and $(\M,\mu)$ runs over a set of representatives for the action of $\G^F_\sigma$ on $\CP(\sigma)$. Next, we isolate the contribution given by the trivial chain $\sigma_0:=\{\G\}\in\CL_e(\G,F)_+$ to the sum in \eqref{eq:counting 1}. In this case, we have $\L(\sigma_0)=\G$ and hence $\ps_{\L(\sigma)}(\mu)=\{\mu\}$ for every $(\M,\mu)\in\CP(\sigma_0)$ because the center $\z(\G^*)^{F^*}$ is trivial under our assumptions. Consequently, using Definition \ref{def:e-chains character sets} and Definition \ref{def:e-chains character sets, blocks}, we deduce that 
\begin{align}
\k_{\rm u}^d(B_{\sigma_0})&=\sum\limits_{(\M,\mu)}\k_{\rm u}^d(B_{\sigma_0},(\M,\mu))\label{eq:counting 2}
\\
&=\sum\limits_{(\M,\mu)}\left|\irr^d(B)\cap \E(\G^F,(\M,\mu))\right|\nonumber
\\
&=\k_{\rm u}^d(B)-\k_{\rm c,u}^d(B)\nonumber
\end{align}
where the last equality holds by \cite[Theorem 3.2 (1)]{Bro-Mal-Mic93} and recalling that every pair $(\M,\mu)\in\CP(\sigma_0)$ satisfies $\M<\G=\L(\sigma_0)$. Next, Theorem \ref{thm:iUnipotent} implies that the sets $\CL_{\rm u}^d(B)_+/\G^F$ and $\CL_{\rm u}^d(B)_-/\G^F$ have the same cardinality and therefore we conclude from \eqref{eq:counting 1} and \eqref{eq:counting 2} that
\begin{equation}
\label{eq:counting 3}
\k^d_{\rm u}(B)-\k_{\rm c,u}^d(B)+\sum\limits_{\substack{\sigma\in\CL_+\\\sigma\neq \sigma_0}}\k^d_{\rm u}(B_\sigma)=\sum\limits_{\sigma\in\CL_+}\k^d_{\rm u}(B_\sigma)=\sum\limits_{\sigma\in\CL_-}\k^d_{\rm u}(B_\sigma).
\end{equation}
Finally, noticing that $(-1)^{|\sigma|+1}=\mp1$ for every $\sigma\in\CL_\pm$, we can rewrite \eqref{eq:counting 3} as
\[\k^d_{\rm u}(B)-\k_{\rm c,u}^d(B)=\sum\limits_{\sigma\in\CL_-\cup\CL_+}(-1)^{|\sigma|+1}\k^d_{\rm u}(B_\sigma)\]
which is exactly the equality in the statement of Theorem \ref{thm:Main Unipotent}.
\end{proof}

\bibliographystyle{alpha}
\bibliography{References}

\vspace{1cm}

DIPARTIMENTO DI MATEMATICA E INFORMATICA U. DINI, VIALE MORGAGNI $67/$A, FIRENZE, ITALY

\textit{Email address:} \href{mailto:damiano.rossi00@gmail.com}{damiano.rossi00@gmail.com}










\end{document}